\numberwithin{equation}{section}
\newtheorem{Theorem}{Theorem}[section]
\newtheorem{Lemma}[Theorem]{Lemma}
\newtheorem{Proposition}[Theorem]{Proposition}
\newtheorem{Definition}[Theorem]{Definition}
\newtheorem{Corollary}[Theorem]{Corollary}
\newtheorem{Remark}{Remark}
\newtheorem{Assumption}{Assumption}
\newtheorem{Condition}{Condition}
  \def\nb{\nonumber}
\def \Vh0{\stackrel{\circ}{V}_h} \def\to{\rightarrow}
   \def\ul{\underline}
\def\Om{\Omega}  \def\om{\omega} 
\newcommand{\q}{\quad}
\def\l{\label}  \def\f{\frac}  \def\fa{\forall}
\def\b{\beta}  \def\a{\alpha} 
\def\del{\delta}
\def\eps{\varepsilon}
 \def\t{\times}  
\def\ms{\medskip}
  \def\x{{\bf x}} 
\def\y{{\bf y}}
\def\cA{\mathcal{A}}
\def\cB{\mathcal{B}}
\def\cE{\mathcal{E}}
\def\cF{\mathcal{F}}
\def\cH{\mathcal{H}}
\def\cI{\mathcal{I}}
\def\cM{\mathcal{M}}
\def\cP{\mathcal{P}}
\def\cQ{\mathcal{Q}}
\def\cS{\mathcal{S}}
\def\cT{\mathcal{T}}
\def\bA{{\textbf{A}}}
\def\N{{\mathbb{N}}}
\def\R{{\mathbb R}}
\def\Z{{\mathbb{Z}}}
\newcommand{\ex}{\mathbb{E}}
\newcommand{\tr}{\textnormal{tr}}
\newcommand{\lc}
{\mathrel{\raise2pt\hbox{${\mathop<\limits_{\raise1pt\hbox
{\mbox{$\sim$}}}}$}}}
\newcommand{\gc}
{\mathrel{\raise2pt\hbox{${\mathop>\limits_{\raise1pt\hbox{\mbox{$\sim$}}}}$}}}
\newcommand{\ec}
{\mathrel{\raise2pt\hbox{${\mathop=\limits_{\raise1pt\hbox{\mbox{$\sim$}}}}$}}}
\def\bb{\begin{equation}} \def\ee{\end{equation}}
\def\bbn{\begin{equation*}} \def\een{\end{equation*}}
\def\beqn{\begin{eqnarray}}  \def\eqn{\end{eqnarray}}
\def\beqnx{\begin{eqnarray*}} \def\eqnx{\end{eqnarray*}}
\def\bn{\begin{enumerate}} \def\en{\end{enumerate}}
\def\bd{\begin{description}} \def\ed{\end{description}}
\newenvironment{tablehere}
  {\def\@captype{table}}
  {}
\newenvironment{figurehere}
  {\def\@captype{figure}}
  {}
\begin{document}

\title{Approximation schemes for mixed optimal stopping and control problems with nonlinear expectations and jumps}
\author{
Roxana Dumitrescu\thanks{Department of Mathematics, King's College London, United Kingdom
({\tt roxana.dumitrescu@kcl.ac.uk})}
\and
Christoph Reisinger\thanks{Mathematical Institute, University of Oxford, United Kingdom ({\tt christoph.reisinger@maths.ox.ac.uk, yufei.zhang@maths.ox.ac.uk})}
\and
Yufei Zhang\footnotemark[2]
}
\date{}

\maketitle

\noindent\textbf{Abstract.} 
We propose a class of numerical schemes for mixed optimal stopping and control 
of processes with infinite activity jumps and where the objective is evaluated by a
nonlinear expectation. 
Exploiting an approximation by switching systems, piecewise constant policy timestepping reduces the problem to nonlocal semi-linear equations with different control parameters, uncoupled over individual time steps, which we solve by fully implicit monotone approximations to the controlled diffusion and the nonlocal term, and specifically the Lax-Friedrichs scheme for the nonlinearity in the gradient. We establish a comparison principle for the switching system and demonstrate the convergence of the schemes, which subsequently gives a constructive proof for the existence of a solution to the switching system. Numerical experiments are presented for a recursive utility maximization problem to demonstrate the 
effectiveness of the new schemes. 

\medskip
\noindent
\textbf{Key words}: approximation schemes, optimal stopping, stochastic control, nonlinear expectations, piecewise constant policy timestepping, jump processes

\ms
\noindent
\textbf{2010 AMS subject classifications: 65M06, 65M12, 62L15, 93E20, 91G80}

\medskip


\section{Introduction}\l{sec:introduction}
Classical Markovian mixed optimal stopping and control problems, where the target is to maximise the (linear) expectation of a payoff 
on a finite time horizon $T$, are  defined as\\
\bb\l{eq:linear_ctrl}
u(t,x)= \sup_{\tau}\sup_{\a} \ex^{t,x}\bigg[\int_t^\tau e^{-r(s-t)} f(\a_s,X^{\a,t,x}_s)\,ds+e^{-r(\tau-t)}\xi(\tau, X^{\a,t,x}_\tau)\bigg],
\ee
where $t\in [0,T]$ is the initial time of the control problem, 
 $\alpha$ is an admissible control process and $\tau$ is a stopping time,
and $X^{\a,t,x}$ is a controlled stochastic differential equation (SDE) of the form: 
$$
X^{\a,t,x}_s=b(\a_s,X^{\a,t,x}_s)\,ds+ \sigma (\a_s,X^{\a,t,x}_s)\,dW_s+\eta(\a_s,X^{\a,t,x}_s,e)\,\tilde{N}(ds,de),\; s\in [t,T]; \q X^{\a,t,x}_t=x.
$$
The positive constant $r$ denotes the discount rate, and the functions $\xi$ and $f$ represent the terminal payoff and the instantaneous reward function, respectively. 
Under certain regularity assumptions on the coefficients, one can demonstrate that the value function 
$u$ satisfies a nonlocal Hamilton-Jacobi-Bellman variational inequality (HJBVI) in the viscosity sense.

These results are extended in \cite{dumitrescu2016} to a setting where the linear expectation $\ex$ is replaced by a nonlinear expectation
$\cE^{\a, t,x}$ generated by a BSDE with jumps, 
\bb\l{eq:nonlinear_ctrl}
u(t,x)= \sup_{\tau}\sup_{\a} \cE^{\a, t,x}_{t,\tau}[\xi(\tau, X^{\a,t,x}_\tau)].
\ee


Such nonlinear expectations arise naturally in financial mathematics, for instance as models for American options in a  market with constrained portfolios \cite{karoui1997}, from recursive utility optimization problems \cite{chen2002}, and  for robust pricing and risk measures under  probability model uncertainty \cite{royer2006}. 
It has been demonstrated in \cite{dumitrescu2016} that under suitable assumptions the value function $u$ in \eqref{eq:nonlinear_ctrl} can be characterized  by the viscosity solution to a more complicated HJBVI \eqref{eq:hjbvi_1}, which involves an extra nonlinearity resulting from the nonlinear expectation:
\begin{align}\l{eq:hjbvi_1}
\min\big\{u(\x)-\zeta(\x),u_t+\inf_{\a\in \bA}\big(-L^\a u-f(\a,\x,u,(\sigma^\a)^T Du,B^\a u)\big)
\big\}=& \; 0,  \q \x\in [0,T]\t \R^d, \\
u(0,x)=& \; g(x), \q x\in \R^d, \nonumber
\end{align}
with $\x = (t,x)$, nonlocal operators $L^\a$ and $B^\a$, the driver $f$ of the BSDE, and given functions $\zeta$ and $g$, which we will specify in Section \ref{sec:formulation}.
Particularly, in the case where the driver is additive in $y$ and independent of $z$ and $k$, i.e., $f(\a,\x,y,z,k)\equiv f(\a,x)-ry$, the generalized control problem \eqref{eq:nonlinear_ctrl} reduces to the classical linear  expectation case \eqref{eq:linear_ctrl}, and \eqref{eq:hjbvi_1} reduces to an
HJB obstacle problem.\footnote{Note a slight abuse of notation where $f(\cdot,\cdot)$ is the same as in \eqref{eq:linear_ctrl}.}
As it is usually difficult to obtain analytic solutions of HJBVIs, it is necessary to design efficient and robust numerical methods for solving these fully nonlinear PIDEs.
\color{black}

We remark that, to the best of our knowledge, even for the case with linear expectations (i.e.\ $f$ in the special form from above), there is no published numerical scheme covering the generality of \eqref{eq:hjbvi_1}. 
However, 
there is a vast literature on monotone approximations for local HJB equations (see, e.g., \cite{crandall1984, barles2007, debrabant2012} and references therein)
and a number of works covering specific extensions. 
For instance, monotone finite-difference quadrature schemes are proposed in \cite{cont2005,biswas2010,biswas2010diff,biswas2017} for nonlocal HJB equations. We refer the reader also to \cite{dHalluin2004} for penalty approximations to nonlocal variational inequalities and to \cite{witte2011} for an application of policy iteration  together with penalization to solve HJB obstacle problems. 
Probabilistic methods for solving HJB equations (without jumps and optimal stopping) can be found, for example, in \cite{kharroubi2015}.

All the aforementioned 
PDE methods
solve \eqref{eq:hjbvi_1} (with $f(\a,\x,y,z,k)\equiv f(\a,x)-ry$) by the standard ``discretize, then optimize" approach, where one discretizes the operators in \eqref{eq:hjbvi_1}, and solves the resulting nonlinear discretized equations using policy iteration, or more generally semi-smooth Newton methods \cite{qi1993,ito2003}. 

However, this standard approach cannot be easily extended to nonlinear $f$ which is only assumed to be Lipschitz (and generally is not semi-smooth \cite{qi1993}), which
prevents a direct application of 
Newton-like solvers (see \cite{xu2017} for a special case of
the discrete optimization problem with $f(\a,\x,y,z,k)\equiv f(\a,x,y)$ differentiable and concave in $y$, and $\bA$ finite).

Moreover, 
at each step of policy iteration, one needs to identify the global optimal policy for each computational node. The nonlinear driver $f$ and other PDE coefficients may have sufficiently complicate nonlinearities in the control variable such that the only way to construct a  convergent algorithm is to discretize the admissible control set, and perform exhaustive search to determine the optimal policy at each node. 

Another approach to solve \eqref{eq:hjbvi_1} uses piecewise constant policy time stepping (PCPT) as in~\cite{reisinger2016}. 
It is based explicitly on a discrete approximation of the admissible set by a finite set, say with $J$ elements, and then defines a piecewise decoupled system of PDEs corresponding to these $J$ (constant) controls. The information from the different solutions is assembled at the end of each timestep by taking the pointwise maximum.

As a specific scheme for these semi-linear PDEs, we propose an implicit Euler time discretization, monotone (semi-Lagrangian) approximations for local diffusions,
 a monotone quadrature-based scheme for the nonlocal terms, and the Lax-Friedrichs scheme for the nonlinearity in the gradient.
The different solutions may be defined on different discretization grids by possibly high order monotonicity preserving interpolations. 
This approach not only avoids policy iteration, but also allows for an easier construction of convergent monotone schemes and an efficient parallel implementation
of the individual semi-linear PDEs.
Note that it is essential to obtain a monotone discretization, since it is well-known that non-monotone schemes may fail to converge or even converge to false ``solutions'' \cite{debrabant2012}.
By Godunov's Theorem \cite{godunov1959}, in general, one can expect a monotone scheme to be at most first-order accurate.

The main contributions of our paper are:
\begin{itemize}
\item We formulate our algorithm by approximating the solution of \eqref{eq:hjbvi_1} by the solution to a switching system with small switching cost. 
We shall establish a comparison principle for the switching system and demonstrate that as the switching cost tends to zero, the solution of the switching system converges to the viscosity solution of \eqref{eq:hjbvi_1}, which extends the results in \cite{biswas2010} to obstacle problems of switching systems and includes nonlinear drivers.
\item We discretize the switching system piecewise in time by fully implicit monotone approximations.
The convergence of the scheme is demonstrated, which subsequently gives a  constructive proof for the existence of a viscosity solution to the switching system. 
Our results extend the one obtained in \cite{reisinger2016} from the case of standard control problems. In contrast to there, PCPT leads to 
coupled semi-linear PDEs rather than linear PDEs due to the nonlinear expectations.
The optimal stopping right is treated as an additional control and included in the switching directly instead of the classical penalisation approach. 
\item
By truncation of the singular jump measure, we obtain a stochastic control and optimal stopping problem whose value function is shown to converge to the value function of the initial problem and which satisfies a HJBVI equation. Our result extends earlier ones obtained only in the case of a linear expectation without control and optimal stopping
(see e.g. \cite{cont2005}).
\item For practical implementations, we propose a Picard-type iteration for the efficient numerical solution without the need to invert the dense matrices resulting from the nonlocal terms.
\item
Numerical examples for a recursive utility maximization problem are included to investigate the convergence order of the scheme with respect to different discretization parameters. 
\end{itemize}

The remainder of this paper is organized as follows. In Section \ref{sec:formulation}, we introduce the Markovian mixed optimal stopping and control problem with nonlinear expectations, and characterize its value function as the viscosity solution of a nonlocal HJBVI. We then derive numerical schemes in Section \ref{sec:scheme} by approximating the HJBVI with a switching system, PCPT, and ultimately fully discrete monotone schemes.
Then we move on to the convergence analysis of our numerical schemes in Section \ref{sec:convergence}. 
Numerical examples for a recursive utility maximization problem are presented in Section \ref{sec:numerical} to illustrate the effectiveness of our algorithms. In the Appendix, we include a rigorous proof of the comparison principle for the switching system and some complementary results that are used in this article.  

\section{Problem formulation and preliminaries}\l{sec:formulation}
In this section, we formulate the mixed optimal stopping and control problem with nonlinear expectation and introduce the connection between such problems and HJBVIs, which is crucial for the subsequent developments. 
We start with some useful notation that is needed frequently in the rest of this work. 

We write by $T > 0$ the terminal time, and by $(\Om, \cF, P )$ a complete probability space, in which two mutually independent processes, a $d$-dimensional Brownian motion $W$ and a Poisson random measure $N(dt,de)$ with compensator $\nu(de)dt$, are defined. We assume $\nu$ is a $\sigma$-finite measure on $E\coloneqq\R^n\setminus\{0\}$ equipped with its Borel field $\cB(E)$ and satisfies
\bb\l{eq:nu_int}
\int_E (1 \wedge |e|^2)\,\nu(de) <\infty.
\ee
 We denote by $\ex$ the usual expectation operator with respect to the measure $P$.
 
For any given $t \in [0,T]$, we define the $t$-translated Brownian motion $W^t:=(W_s-W_t)_{s \geq t}$ and the $t$-translated Poisson random measure $N^t:=N(]t,s],\cdot)_{s \geq t}$. We denote by $\tilde{N}^t(dt,de)=N^t(dt,de)-\nu(de)dt$ the compensated process of $N^t$,
  and by $\mathbb{F}^t=\{\cF^t_s\}_{s\in[t,T]}$ be the  filtration generated by $W^t$ and $N^t$ augmented by the $P$-null sets.

  Furthermore, we introduce several spaces: $L^2_\nu$ is the  space of Borel functions $l:E\to \R$ with $\|l\|_\nu^2\coloneqq \int_E|l(e)|^2\,\nu(de)<\infty$; 
$\mathbb{H}^2_t$ (resp. $\mathbb{H}^2_{t,\nu}$) is the  space of $\R^{d}$-valued (resp.\ real-valued) $\mathbb{F}^t$-predictable processes $(\pi_s)$ (resp.\ $(l_s(\cdot)$) with $\ex\int_t^T |\pi_s|^2\,ds<\infty$ (resp.\ $\ex\int_t^T \|l_s\|_{L^2_\nu}^2\,ds<\infty$); $\cS^2_t$ is the space of real-valued $\mathbb{F}^t$-adapted c\`{a}dl\`{a}g processes $(\psi_s)$ with $\ex[\sup_{t \leq s \leq T} \psi_s^2]<\infty$.

We now proceed to introduce the control problem of interest. For each $t\in [0,T]$,
let $\cA_t^t$ be a set of admissible controls, which are $\mathbb{F}^t$-predictable processes $(\a_s)_{s\in[t,T]}$ valued in a compact set $\mathbf{A}$, and $\mathcal{T}_t^t$ be the set of $\mathbb{F}^t$-stopping times which take values in $[t,T]$.
For any given  initial state $x\in \R^d$, and control $\a\in \cA_t^t$, we consider the controlled jump-diffusion process 
 $(X^{\a,t, x}_s)_{ t \le s\le T}$ satisfying the following SDE: for each $s\in [t,T]$,
\bb\l{eq:sde}
X^{\a,t,x}_s=x+\int_t^s b(\a_v,X^{\a,t,x}_v)\,dv+\int_t^s \sigma (\a_v,X^{\a,t,x}_v)\,dW^t_v+\int_t^s\int_E \eta(\a_v,X^{\a,t,x}_v,e)\,\tilde{N}^t(dv,de),
\ee
where $b$, $\eta\in \R^d$ and $\sigma\in \R^{d\t d}$ are given {measurable} functions. We remark that although our analyses are performed only for   jump-diffusion processes with time-homogenous coefficients,  similar results are valid for controlled dynamics with time-dependent coefficients.

The performance of the control problem, depending on $\a$, is  evaluated  by a nonlinear expectation induced by a BSDE with a  controlled driver $f(\a_s,s,X_s^{\a,t,x},y,z,k)$. 
That is, for any given stopping time $\tau\in \mathcal{T}_t^t$ and  any bounded Borel function $\xi$, we define the nonlinear expectation 
$$
\cE^{\a,t, x}_{t,\tau}[\xi(\tau,X^{\a,t,x}_\tau)]\coloneqq Y^{\a, \tau,t, x}_{t},
$$
where the process $(Y^{\a, \tau, t,x}_{s})_{s\le \tau}$ is a solution in $\cS_t^2$ of the following BSDE:
for each $s\in [t,\tau]$,
\bb\l{eq:bsde}
\begin{cases}
-Y^{\a,t,x}_{s,\tau}=f(\a_s,s,X_s^{\a,t,x},Y^{\a,t,x}_{s,\tau},Z^{\a,t,x}_{s,\tau},K^{\a,t,x}_{s,\tau})ds-Z^{\a,t,x}_{s,\tau}dW^t_s-\int_E K^{\a,t,x}_{s,\tau}\,\tilde{N}^t(ds,de),\\
Y^{\a,t,x}_{\tau,\tau}=\xi(\tau,X^{\a,t,x}_\tau),
\end{cases}
\ee
and $(Z^{\a,t,x}_{s,\tau})$, $(K^{\a,t,x}_{s,\tau})$ are two associated processes, if they exist, lying in $\mathbb{H}_t$ and $\mathbb{H}^\nu_t$, respectively. 

Now we are ready to state the generalized mixed optimal stopping and control problem.  
For each initial 
time $t\in[0,T]$ and initial state $x\in \R^d$, we consider the following value function:
\bb\l{eq:nonlinear_control}
u(t,x)= \sup_{\tau\in \cT_t^t}\sup_{\a\in \cA_t^t} \cE^{\a, t,x}_{t,\tau}[\xi(\tau, X^{\a,t,x}_\tau)],
\ee
subject to the controlled SDE \eqref{eq:sde}, where $\xi$ is the terminal position given by
$$
\xi(\tau,X^{\a,t,x}_\tau)=\zeta(\tau, X^{\a,t,x}_\tau)1_{t\le \tau<T}+g(X^{\a,t,x}_T)1_{\tau=T},
$$
for some reward functions $\zeta$ and $g$. Note that the value function of our control problem is constant up to a $P$-null set. Throughout this work, we shall perform the analysis under the following standard assumptions on the coefficients:
\begin{Assumption}\l{assum:coeff}
{The set of control values $\bA$ is compact} and the driver $f$ {is a measurable function} of the form 
$f(\a,s,x,y,z,k)\coloneqq \hat{f}(\a,s,x,y,z,\int_E k(e)\gamma(x,e)\,\nu(de))1_{s\ge t}$ for some functions 
$\hat{f}$ and $\gamma$. 
Moreover, there exists a constant $C>0$ such that for any $\a,\a'\in \bA$,  $t\in[0,T]$, $e\in E$, $x,x'\in \R^d$, $u,v\in \R$, $p,q\in \R^d$, $k,k'\in \R$, we have 
\begin{enumerate}[(1)]
\item $|b(\a,x)-b(\a',x')|+|\sigma(\a,x)-\sigma(\a',x')|\le C(|x-x'|+|\alpha-\alpha'|)$;

\item $|\eta(\a,x,e)-\eta(\a',x',e)|\le C(|x-x'|+|\a-\a'|)(1 \wedge |e|)$ and $|\eta(\a,x,e)|\le C(1 \wedge |e|)$;
\item  $|\gamma(x,e)-\gamma(x',e)|\le C|x-x'|(1 \wedge |e|^2)$; $|\gamma(x,e)|\le C(1 \wedge |e|)$ and $\gamma(x,e)\ge 0$;
 \item   $\hat{f}:\bA\t [0,T]\t \R^d\t\R\t \R^{d}\t\R\to \R$ is continuous in $t$ and admits the  properties:
\begin{enumerate}
\item (Boundedness.) $|\hat{f}(\a,t,x,0,0,0)|\le C$;
\item (Monotonicity.) $\hat{f}(\a,t,x,v,p,k)-\hat{f}(\a,t,x,u,p,k)\ge C(u-v)$ when $u\ge v$, and $k\to \hat{f}(\a,t,x,u,p,k)$ is non-decreasing in $k$;
\item (Lipschitz continuity.) $|\hat{f}(\a,t,x,u,p,k)-\hat{f}(\a',t,x,v,q,k')|\le C(|\a-\a'|+|u-v|+|p-q|+|k-k'|)$;
\item for each $R>0$, there exists a continuous function $m_R:\R_+\to \R_+$ with $m_R(0)=0$ and 
$$
|\hat{f}(\a,t,x,u,p,k)-\hat{f}(\a,t,x',u,p,k)|\le m_R(|x-x'|(1+|p|)), \q |x|,|x'|,|u|\le R;
$$
\end{enumerate}
\item the function $\zeta:[0,T]\t\R^d\to \R$ is continuous in $t$, and we have $|g(x)-g(x')|+|\zeta(t,x)-\zeta(t,x)|\le C|x-x'|$, 
$|g(x)|+|\zeta(t,x)|\le C$,  and $g(x)\ge \zeta(0,x)$.
\end{enumerate}
\end{Assumption}
Assumption \ref{assum:coeff} is the same as that made in \cite{dumitrescu2016a}.

{Note that under   assumptions $(1), (2)$, equation \eqref{eq:sde} admits a unique solution. Assumptions  $(3), (4.a), (4.c), (5)$ quarantee the existence and the uniqueness of the solution of equation \eqref{eq:bsde}}. Consequently, the generalized mixed control problem is well-defined.
For notational convenience, in the sequel, we will write $\hat{f}$ as $f$, and denote by $\psi^\a$ a generic function $\psi$ with control-dependence.

The rest of this section is devoted to the equivalence between the mixed control problem and a generalized nonlocal HJBVI.
Specifically, we now consider a Hamilton-Jacobi-Bellman variational inequality of the following form: 
\begin{align}\l{eq:hjbvi}
0&=F(\x,u,Du,D^2u,\{K^\a u\}_{\a\in\bA},\{B^\a u\}_{\a\in\bA})\\
&=\begin{cases}
\min\big\{
u(\x)-\zeta(\x),u_t+\inf_{\a\in \bA}\big(-L^\a u-f(\a,\x,u,(\sigma^\a)^T Du,B^\a u)\big)
\big\},  & \x\in \cQ_T ,\nb\\
u(\x)-g(x), & \x\in \{0\}\t\R^d,\nb
\end{cases}
\end{align}
where $\cQ_T= (0,T]\t \R^d$, $\x=(t,x)$ contains both the time $t$ and the spatial coordinate $x\in \R^d$, and the nonlocal operators $L^\a\coloneqq A^\a+K^\a$ and $B^\a$ satisfy, for $\phi\in C^{1,2}(\bar{\cQ}_T)$:
\begin{align}
A^\a\phi(\x)&=\f{1}{2} \tr(\sigma^\a(x)(\sigma^\a(x))^TD^2\phi(\x))+b^\a(x) \cdot D\phi(\x),\l{eq:a}\\
K^\a\phi(\x)&=\int_{{E}}\big(\phi(t,x+\eta^\a(x,e))-\phi(\x)-\eta^\a(x,e)\cdot D\phi(\x)\big)\,\nu(de),\l{eq:k}\\
B^\a\phi(\x)&=\int_{{E}}\big(\phi(t,x+\eta^\a(x,e))-\phi(\x)\big)\gamma(x,e)\,\nu(de),\l{eq:b}
\end{align}
where $E=\R^n\setminus\{0\}$ is defined at the beginning of Section \ref{sec:formulation}
and
the nonlocal operators $K^\a u$ and $B^\a u$ are well-defined under Assumption \ref{assum:coeff}. 
%
%
%
%
%
%

We emphasize that since the  matrix $\sigma^\a(\sigma^\a)^T$ is only assumed to be nonnegative definite, both the diffusion   coefficient  $\sigma^\a(\sigma^\a)^T$ and the jump intensity $\eta$ of \eqref{eq:hjbvi} are allowed to vanish at some points. Consequently, there is no Laplacian smoothing from the second-order differential operator nor fractional Laplacian smoothing from the nonlocal operator to this degenerate equation \eqref{eq:hjbvi}. Therefore, in general, this HJBVI will not admit classical solutions, and we shall interpret the equation in the following viscosity sense  based on semi-continuous envelopes of the equation \cite{barles1997,reisinger2016}. 

\begin{Definition}[Viscosity solution of  HJBVI]
An upper (resp.\ lower) semicontinuous function $u$ is said to be a viscosity subsolution (resp.\ supersolution) of \eqref{eq:hjbvi} if and only if 
for any point $\x_0$ and for any $\phi\in C^{1,2}(\bar{\cQ}_T)$ such that 
$\phi(\x_0)=u(\x_0)$ and $u-\phi$ attains its global  maximum (resp.\ minimum) at $\x_0$, one has
\begin{align*}
&F_*(\x_0,u(\x_0),D\phi(\x_0),D^2\phi(\x_0),\{K^\a \phi(\x_0)\}_{\a\in\bA},\{B^\a \phi(\x_0)\}_{\a\in\bA})\le 0,
\\
\big(resp. \q &F^*(\x_0,u(\x_0),D\phi(\x_0),D^2\phi(\x_0),\{K^\a \phi(\x_0)\}_{\a\in\bA},\{B^\a \phi(\x_0)\}_{\a\in\bA})\ge 0
\big).
\end{align*}
%
%
A continuous function is a viscosity solution of the  HJBVI \eqref{eq:hjbvi} if it is both a a viscosity sub- and supersolution.
\end{Definition}

Under Assumption \ref{assum:coeff}, the HJBVI \eqref{eq:hjbvi} is well-posed in the class of 
bounded continuous functions (see \cite{dumitrescu2015,dumitrescu2016}). The unique  viscosity solution of  \eqref{eq:hjbvi} (after a change of time variable) can be further characterized as the optimal value function \eqref{eq:nonlinear_control} of the mixed control problem. In other words, to obtain the optimal value function for all initial times $t$ and initial states $x$, it is equivalent to design effective numerical schemes to solve \eqref{eq:hjbvi}.

Moreover, a strong comparison principle holds for the  HJBVI \eqref{eq:hjbvi}, the proof of which is similar to that in \cite{dumitrescu2015} (without controls) and hence omitted.
In particular, if $U$ is a bounded viscosity subsolution and $V$ is a bounded viscosity supersolution to \eqref{eq:hjbvi} with $U(0,\cdot)\le V(0,\cdot)$, we have $U(\x)\le V(\x)$ for all $\x\in \bar{\cQ}_T$. 

\section{Construction of  numerical schemes}\l{sec:scheme}
In this section, we will design numerical schemes for solving HJBVI \eqref{eq:hjbvi}. 
We carry out the following string of  approximations to construct our numerical algorithm:
\begin{itemize}
\item truncation of the singular jump measure (equation (\ref{eq:hjbvi_trun}) and Appendix \ref{sec:jumps});
\item approximation of the control set with a finite set (equation (\ref{eq:hjbvi_dc}) and Theorem \ref{theo:disc_cont});
\item approximation of the discretized control problem with a switching system (equation (\ref{eq:hjbvi_s}) and Theorem \ref{thm:conv_s});
\item discretization in time and space (equations (\ref{eq:Un+1/2}, \ref{eq:PCCT}) and Theorem \ref{thm:conv_numerical}).
\end{itemize}

We  start the derivations of our schemes by approximating the singular measure $\nu$ with a  truncated non-singular measure and a modified diffusion coefficient as suggested in \cite{cont2005}. This can be done by introducing an approximative jump-diffusion dynamics and an approximative backward SDE (see Appendix \ref{sec:jumps}).
More precisely, for any given $\eps>0$, let us define the truncated  measure $\nu_{\eps}(de)=1_{|e|>\eps}\nu(de)$ and the modified diffusion coefficient $\tilde{\sigma}^{\a}(x)$ such that $\tilde{\sigma}^{\a}_{ij}(x)=\sigma^{\a}_{ij}(x)$ for $ i\not=j$ and 
\bb\l{eq:small_diff}
\tilde{\sigma}^{\a}_{ii}(x)=\bigg((\sigma^{\a}_{ii}(x))^2+\int_{|e|<\eps}|\eta^{\a}_i(x,e)|^2\, \nu(de)\bigg)^{1/2}, \q i=1,\ldots, d, \; x\in \R^d.
\ee
We further introduce the modified local operator $A^{\a}_{\eps}$ as: 
\bb\l{eq:diff_modi}
A^{\a}_{\eps}\phi(\x)\coloneqq \f{1}{2} \tr(\tilde{\sigma}^\a(x)(\tilde{\sigma}^\a(x))^TD^2\phi(\x)),\q \phi\in C^{1,2}([0,T]\t\R^d),
\ee
and   truncated nonlocal operators $K^{\a}_{\eps}$ and $B^{\a}_{\eps}$ by replacing $\nu$ with $\nu_{\eps}$ in \eqref{eq:k} and \eqref{eq:b}, respectively.

With these operators in hand, we consider the following modified HJBVI:
\begin{align}\l{eq:hjbvi_trun}
0&=F^\eps(\x,u,Du,D^2u,\{K_\eps^\a u\}_{\a\in\bA},\{B_\eps^\a u\}_{\a\in\bA})\\
&=\begin{cases}
\min\big\{
u-\zeta,u_t+\min_{\a\in \bA}\big(-L_\eps^\a u-f^{\a}(\x,u,(\tilde{\sigma}^\a)^T Du,B_\eps^\a u)\big)
\big\},  & \x\in \cQ_T ,\nb\\
u(\x)-g(x), & \x\in \{0\}\t\R^d,\nb
\end{cases}
\end{align}
where we have $L^{\a}_{\eps}\phi\coloneqq A^{\a}_{\eps}\phi+K^{\a}_{\eps}\phi$ for any $\phi\in C^{1,2}([0,T]\t\R^d)$. These modified coefficients clearly satisfy Assumption \ref{assum:coeff}, and hence   \eqref{eq:hjbvi_trun} is well-posed in the viscosity sense. 

%

\begin{Remark}
In  Appendix \ref{sec:jumps}, we provide an alternative interpretation of the above approximation by identifying the viscosity solution of  \eqref{eq:hjbvi_trun} as the  value function of a mixed  control problem in terms of  modified SDE and BSDE. This characterization further enables us to establish the convergence of this approximation through a probabilistic argument. 

\end{Remark}

We then approximate the admissible control set in \eqref{eq:hjbvi_trun} by a finite set. More precisely, for a finite subset $\bA_\delta$ of the compact set $\bA$
  such that
$$
\max_{\a\in \bA}\min_{\tilde{\a}\in \bA_\delta}|\a-\tilde{\a}|<\delta,
$$
we introduce the finite control  HJBVI by
\begin{align}\l{eq:hjbvi_dc}
0&=F^{\eps,\delta}(\x,u,Du,D^2u,\{K_\eps^\a u\}_{\a\in\bA_\delta},\{B_\eps^\a u\}_{\a\in\bA_\delta})\\
&=\begin{cases}
\min\big\{
u-\zeta,u_t+\min_{\a\in \bA_\delta}\big(-L_\eps^\a u-f^{\a}_{\eps}[u]\big)
\big\},  & \x\in \cQ_T ,\nb\\
u(\x)-g(x), & \x\in \{0\}\t\R^d,\nb
\end{cases}
\end{align}
where we denote for simplicity the nonlinear function $f^{\a}_{\eps}[u]\coloneqq f^{\a}(\x,u^\delta,(\tilde{\sigma}^\a)^T Du^\delta,B_\eps^\a u^\delta)$. Since \eqref{eq:hjbvi_dc} is a special case of \eqref{eq:hjbvi} with a finite admissible set, it is clear that \eqref{eq:hjbvi_dc}  admits a unique bounded viscosity solution.

Next, we approximate the finite control equation \eqref{eq:hjbvi_dc} by a switching system (\cite{barles2007,biswas2010}).  
Suppose the finite control set is given by  $\bA_\delta=\{\a_1,\a_2,\ldots, \a_J\}$. We denote by $U_j^{\eps,\delta,c}$, $j=1,\ldots, J$  the solution of the following system of HJB equations:
\begin{align}\l{eq:hjbvi_s}
0&=F_j^{\eps,\delta,c}(\x,U_j, DU_j, D^2U_j,\{K_\eps^\a u\}_{\a\in\bA_\delta},\{B_\eps^\a u\}_{\a\in\bA_\delta},\{U_k\}_{k\not =j})\nb\\
&=\begin{cases}
\min\bigg[U_j-\zeta,\; \min\big(U_{j,t}-L_\eps^{\a_j} U_j-f_\eps^{\a_j}[U_j]; \; U_{j}-\cM_jU\big)\bigg],  &\x\in \cQ_T,\\
U_j(\x)-g(x),  & \x\in \{0\}\t\R^d,
\end{cases}
\end{align}
where we define $\cM_jU\coloneqq \max_{k\not =j}\big(U_k-c\big)$ for any $c>0$.
The positive switching cost  is needed for the well-posedness of the switching system \eqref{eq:hjbvi_s}.


We now proceed to introduce a discrete approximation to the switching system based on the idea of piecewise constant policy timestepping. Define a set of nodes $\{x_{j,i}\}$ and timesteps $t^n$, with discretization parameters $h$ and $\Delta t$, i.e., 
\bb
\max_{1\le j\le J,x\in \R^d}\min_{i}|x-x_{j,i}|=h,\q \max_n(t^{n+1}-t^n)=\Delta t.
\ee
By parameterizing the grid $\Omega_{j,h}=\{\x_{j,i}\}_i$ with the control index $j$, we are allowing the usage of different discretization grids for different controls. 
We denote by $U^{n}_{j,i}$ the discrete approximation to $U_{j}$ at the  point $\x^n_{j,i}=(t^n,x_{j,i})$, and extend it to 
the computational domain by interpolation.

Let $L^{\a_j}_{\eps,h}$ and $f^{\a_j}_{\eps,h}$ be the discrete form of the operators $L_\eps^{\a_j}$ and $f_\eps^{\a_j}$, respectively. We discretize \eqref{eq:hjbvi_s} on the grid $\Omega_{j,h}$ with the uniform time partition $t^{n+1}-t^n=\Delta t$ by performing piecewise constant policy timestepping and applying the constraints at the beginning of a new timestep,
\begin{align}
U^{n+\f{1}{2}}_{j,i}&=\max\big[\zeta_i^{n+1}, \, U^{n}_{j,i},\,\max_{k\not= j}(\tilde{U}^{n}_{k,i(j)}-c)\big)\big],\l{eq:Un+1/2} \\
U^{n+1}_{j,i}-\Delta t\big(L_{\eps,h}^{\a_j} U_{j,i}^{n+1}+f_{\eps,h}^{\a_j}[U_{j,i}^{n+1}]\big)&=U^{n+\f{1}{2}}_{j,i},\q\q j=1,\ldots, J, \l{eq:PCCT}
\end{align}
where $\tilde{U}^{n}_{k,i(j)}$ is the value of the interpolant of $\{U^{n}_{k,l}\}_{l\in \Omega_{k,l}}$ at the $i$-th point of the grid $ \Omega_{j,h}$.

Now  by rearranging the terms of \eqref{eq:PCCT}, we obtain the following numerical scheme: for $\x_{j,i}^n\in \cQ_T$ and $j=1,\ldots, J$,
\begin{align}
0=&G_j(\x^{n+1}_{j,i},h,U^{n+1}_{j,i}, \{U^{b+1}_{j,a}\}_{\textnormal{$(a,b)\!\not =\!(i,n)$}}, \{
\tilde{U}^n_k\}_{k\not =j})\nb\\
=&\min\bigg[U^{n+1}_{j,i}-\zeta_i^{n+1}-\Delta t\big(L_{\eps,h}^{\a_j} U_{j,i}^{n+1}+f_{\eps,h}^{\a_j}[U_{j,i}^{n+1}]\big),\; 
\f{U^{n+1}_{j,i}-U^{n}_{j,i}}{\Delta t}-L_{\eps,h}^{\a_j} U_{j,i}^{n+1}-f_{\eps,h}^{\a_j}[U_{j,i}^{n+1}], \nb\\
&\hspace{1cm} U^{n+1}_{j,i}-\max_{k\not= j}(\tilde{U}^{n}_{k,i(j)}-c)
-\Delta t\big(L_{\eps,h}^{\a_j} U_{j,i}^{n+1}+f_{\eps,h}^{\a_j}[U_{j,i}^{n+1}]\big)
\bigg]. \l{eq:scheme}
\end{align}

As seen from \eqref{eq:scheme}, performing switching at the beginning
of a new timestep introduces two additional terms to both the switching part and the obstacle part of the equation, which will not appear in a straightforward discretization of the switching system \eqref{eq:hjbvi_s}. However, we will demonstrate in Section \ref{sec:implicit} that these  terms vanish as $\Delta t, h\to 0$, and consequently our scheme \eqref{eq:scheme} forms a consistent approximation to \eqref{eq:hjbvi_s}. 

For notational simplicity, we label our approximations only by $h$ and assume in the sequel that $\Delta t$ is a given function of $h$ with $\Delta t\to 0$ as $h\to 0$.  

\begin{Remark}\label{rem:controls}
To determine the optimal stopping strategy and the optimal controls,
one can simply compare values of  the obstacle and all components of the switching system at each grid point.
As we will see in Section \ref{sec:conv_switching}, each component of the switching system converges to the solution of the HJBVI \eqref{eq:hjbvi} as the discretization parameters tend to 0; therefore, although we have no guarantee for the convergence of the control approximation, this 
numerically found control is close to optimal 
for the mixed control problem \eqref{eq:nonlinear_control}.
\end{Remark}


We now describe in detail  how we perform spatial discretizations for $L^{\a}_{\eps}$ and $B^{\a}_{\eps}$ to construct a monotone discrete  operator $L^{\a}_{\eps,h}$ and $f^{\a}_{\eps,h}$, for a fixed control parameter $\a\in\{\a_1,\ldots,\a_J\}$.
To simplify the presentation, we  
consider a piecewise linear or multilinear interpolation $\cI_h$ on a uniform spatial grid $h\Z^d$. That is,
\bb\l{eq:lininterp}
\cI_h[\phi](x)=\sum_{m\in \Z^d}\phi(x_m)\omega_m(x;h),\q x\in\R^d,
 \ee
for  the standard ``tent functions''  $\om_m$ satisfying $0\le  \om_m(x;h)\le 1$, $\om_m(x_i;h)=\delta_{mi}$, $\sum_m\om_m=1$, $\textrm{supp}\,\om_m\subset B(x_m,2h)$, and $|D\om_m|\le C/h$.

We start with the nonlocal terms. 
The definition of $K^{\a}_{\eps}$ gives 
\begin{align*}
K^\a_{\eps}\phi(\x)&=\int_{|e|\ge \eps}\big(\phi(t, \x+\eta^\a(x,e))-\phi(x)-\eta^\a(x,e)\cdot D\phi(\x)\big)\,\nu(de)\\
&=\int_{|e|\ge \eps}\big(\phi(t,x+\eta^\a(x,e))-\phi(x)\big)\,\nu(de)+\int_{|e|\ge \eps}-\eta^\a(x,e)\,\nu(de)\cdot D\phi(\x)\\
&\coloneqq K^{\a,1}_{\eps}\phi(\x)+b^{\a}_{\eps}(x)\cdot D\phi(\x).
\end{align*}
Then, by replacing the integrands by their monotone interpolants (c.f. \cite{biswas2010diff}), we derive the following approximations for $K^{\a,1}_{\eps}$ and $B^{\a}_{\eps}$
(where we have dropped the mesh index $j$ in $x$ for simplicity): 
\begin{align}
K^{\a,1}_{\eps,h}\phi(t^n,x_i)&\coloneqq\int_{|e|\ge \eps}\cI_h[\phi(t^n,x_i+\cdot)-\phi(t^n,x_i)](\eta^\a(x_i,e))\,\nu(de)\nb\\
&=\sum_{m\in \Z^d}\kappa^{\a,n}_{h,m,i}[\phi(t^n,x_i+x_m)-\phi(t^n, x_i)] \l{eq:K_mono}\\
B^{\a}_{\eps,h}\phi(t^n,x_i)&\coloneqq\int_{|e|\ge \eps}\cI_h[\phi(t^n,x_i+\cdot)-\phi(t^n,x_i)](\eta^\a(x_i,e))\gamma(x_i,e)\,\nu(de)\nb\\
&=\sum_{m\in \Z^d}\b^{\a,n}_{h,m,i}[\phi(t^n,x_i+x_m)-\phi(t^n, x_i)],\l{eq:B_mono}
\end{align}
with the coefficients 
\bb\l{eq:k&b_coeff}
\kappa^{\a,n}_{h,m,i}\coloneqq \int_{|e|\ge \eps}\om_m(\eta^\a(x_i,e);h)\,\nu(de),\q 
\b^{\a,n}_{h,m,i}\coloneqq \int_{|e|\ge \eps}\om_m(\eta^\a(x_i,e);h)\gamma(x_i,e)\,\nu(de),
\ee
which are well-defined and nonnegative, and consequently result in monotone approximations.
{These coefficients can be efficiently evaluated by using quadrature rules with positive weights, such as Gauss methods of appropriate order \cite{biswas2010diff}.}

We then turn to discretizing the local terms. 
We introduce the modified drift  by
$\tilde{b}^\a(x)=b^\a(x)+b^\a_{\eps}(x)$,
and write the modified diffusion $\tilde{\sigma}^\a=(\tilde{\sigma}^\a_1,\ldots, \tilde{\sigma}^\a_d)$,
where $\tilde{\sigma}_l^\a$, $l=1,\ldots, d$ is the $l$-th column of $\tilde{\sigma}^\a$
 defined  in \eqref{eq:small_diff}.

With these modified coefficients, we are ready to construct the following approximations of the local operators: for any $k>0$,
\begin{align*}
\f{1}{2}\tr(\tilde{\sigma}^\a(x)(\tilde{\sigma}^\a(x))^TD^2\phi(x))&
\approx\f{1}{2}\sum_{l=1}^{d}\f{\cI_h[\phi](x+k\tilde{\sigma}^\a_l)-2\cI_h[\phi](x)+\cI_h[\phi](x-k\tilde{\sigma}^\a_l)}{k^2}\\
\tilde{b}^\a(x) \cdot D\phi&\approx 
\f{1}{2}\f{\cI_h[\phi](x+k^2\tilde{b}^\a)-2\cI_h[\phi](x)+\cI_h[\phi](x+k^2\tilde{b}^\a)}{k^2},
\end{align*}
which, by using \eqref{eq:lininterp} and the fact that $\sum_m\om_m=1$, can be further written in the discrete monotone form
\begin{align}\l{eq:A_mono}
A^{\a}_{\eps,h,k}\phi(t^n,x_i)=\sum_{m\in \Z^d}d^{\a,n}_{h,k,m,i}[\phi(t^n,x_m)-\phi(t^n,x_i)],
\end{align}
with non-negative coefficients
\bb\l{eq:d_coeff}
d^{\a,n}_{h,k,m,i}=\f{1}{2}\sum_{l=1}^{d}\f{\om_m(x_i+k\tilde{\sigma}^\a_l(x_i);h)+\om_m(x_i-k\tilde{\sigma}^\a_l(x_i);h)}{k^2}+\f{\om_m(x_i+k^2\tilde{b}^\a(x_i);h)}{k^2}\ge 0.
\ee
The approximation to the local operator $A^{\a}_{\eps}$ falls into the class of semi-Lagrangian schemes (see e.g.\ \cite{debrabant2012}), and provides a consistent monotone approximation for possibly degenerate, non-diagonally dominant diffusion coefficients.


Before presenting our fully discrete scheme, we shall point out  that by considering a truncated problem,
one can without loss of generality assume that $\sigma^\a$ is   bounded, which consequently implies the Hamiltonian
$$
\bar{f}^{\a}(\x,u, p,k)\coloneqq f^{\a}(\x,u,(\tilde{\sigma}^\a(x))^T p,k)
$$
 is Lipschitz continuous with respect to $p$. Indeed, suppose $\sigma^\a$ is unbounded, then for any given $\mu>0$, we define the cut-off function
$$
\xi_\mu:\R^d\to [0,1],\q \textnormal{$\xi_\mu\equiv 1$ for $|x|<\f{1}{\mu}$, and $\xi_\mu\in C_0^\infty(\R^d)$},
$$
and consider a truncated HJBVI \eqref{eq:hjbvi_dc} by replacing $\sigma^\a$ with the bounded diffusion coefficient $\sigma_\mu^\a(x)\coloneqq \xi_\mu(x)\sigma^\a(x)$.
Using the fact that $1-\xi_\mu\to 0$ uniformly on compact sets as $\mu\to 0$, one can easily prove  this additional approximation is consistent with \eqref{eq:hjbvi_dc}, and hence its viscosity solution converges to the solution to \eqref{eq:hjbvi_dc} uniformly on bounded sets.

The Lipschitz continuous Hamiltonian enables an approximation 
by the implicit Lax-Friedrichs numerical flux \cite{crandall1984}. For each $l=1,\ldots, d$, we denote by $\Delta ^{(l)}_+ U^n_{j,i}$ (resp.\ $\Delta ^{(l)}_- U^n_{j,i}$) the one-step forward (resp.\ backward) difference operator  along the $l$-th coordinate, and by $\Delta U^{n}_{j,i}=(\Delta ^{(1)}_+ U^n_{j,i}+\Delta ^{(1)}_- U^n_{j,i},\ldots, \Delta ^{(d)}_+ U^n_{j,i}+\Delta ^{(d)}_- U^n_{j,i})^T$ the central difference operator at the grid point $\x_{j,i}^n$. 
Then for any $\theta>0$, the Lax-Friedrichs numerical flux 
is given
for any $(\x^{n}_{j,i}, u, k)\in \Omega_{j,h}\t \R\t \R$ by
\bb\l{eq:numerical_flux}
\tilde{f}^{\a}(\x^{n}_{j,i},u,  \Delta U^{n}_{j,i},k)\coloneqq \bar{f}^{\a}(\x^{n}_{j,i},u,  \f{\Delta U^{n}_{j,i}}{2h},k)
 +\sum_{l=1}^d \f{\theta}{\lambda}\bigg(\f{\Delta ^{(l)}_+ U^n_{j,i}-\Delta ^{(l)}_- U^n_{j,i}}{h}\bigg),
\ee
where we define $\lambda=\Delta t/h$.

A fully implicit time discretisation is finally given by
\begin{align}\l{eq:implicit}
\begin{split}
U^{n}_{j,i}-\Delta t\big(A^{\a}_{\eps,h,k}U^{n}_{j,i}
+K^{\a,1}_{\eps,h}U^{n}_{j,i}+
 \tilde{f}^{\a}(\x^{n}_{j,i},U^{n}_{j,i},  \Delta U^{n}_{j,i},B^{\a}_{\eps,h}U^{n}_{j,i})
 \big)-U^{n-\f{1}{2}}_{j,i}=0.
\end{split}
\end{align}
Substituting \eqref{eq:Un+1/2} into \eqref{eq:implicit}, one can reformulate this implicit scheme in its equivalent form \eqref{eq:scheme}.

We end this section with a remark about the implementation of the implicit scheme \eqref{eq:implicit}. To avoid solving linear systems with the dense matrices resulting from the discretization of the nonlocal operators, we write the solution to \eqref{eq:implicit} as the fixed point of a sparse contraction mapping $T$,
such that sufficient accuracy is achieved in practice by a few fixed point iterations.

Given bounded functions $U^{n-\f{1}{2}}_{j}$ and $U^{n,(k)}_{j}$, we define the following mapping $T$ on  $\ell^\infty (\Z^d)$, i.e., the Banach space of bounded functions on  $h\Z^d$ employed with the sup-norm $|\cdot|_0$:
\bb\l{eq:contract_implement}
(1-\Delta t A^{\a}_{\eps,h,k})(TU^{n,(k)}_{j,i})
=\Delta t \big(K^{\a,1}_{\eps,h}U^{n,(k)}_{j,i}+
 \tilde{f}^{\a}(\x^{n}_{j,i},U^{n,(k)}_{j,i},  \Delta U^{n,(k)}_{j,i},B^{\a}_{\eps,h}U^{n,(k)}_{j,i})
 \big)+U^{n-\f{1}{2}}_{j,i}.
 \ee
It is clear that a fixed point $U_{j}^n$ with $TU_{j}^n=U_{j}^n$ is a solution to \eqref{eq:implicit}. 
Moreover, for any given functions $U^{n,(k)}_{j}$ and $V^{n,(k)}_{j}$ in $\ell^\infty (\Z^d)$, we obtain from the Lipschitz continuity of $f$ and the $\ell^\infty$ stability of the numerical flux $\tilde{f}$ (see Lemma \ref{lemma:lax}) that
\begin{align*}
|TU^{n,(k)}_{j}-TV^{n,(k)}_{j}|_0\le \bigg(\Delta t \big[\sum_{m\not =0}\kappa^{\a,n}_{h,m,i}+C
\big(1+\sum_{m\not =0}\b^{\a,n}_{h,m,i}\big)\big]+4d\theta\bigg)|U^{n,(k)}_{j}-V^{n,(k)}_{j}|_0.
\end{align*}
Since we need $h=o(\eps)$  in general  to achieve consistency of our scheme (see Lemma \ref{lemma:consistency_op}), it suffices to require $\f{\Delta t}{\eps^2}<1$ and $4d\theta<1$ to ensure $T$ is a contraction mapping on $\ell^\infty (\Z^d)$. This establishes the well-posedness of \eqref{eq:implicit} and enables us to solve the nonlinear equation \eqref{eq:implicit} through Picard iterations by setting 
$$
U^{n,(0)}_{j}=U^{n-\f{1}{2}}_{j},\qquad U^{n,(k+1)}_{j}=TU^{n,(k)}_{j},\q k\ge 0.
$$

We emphasize that the criterion $\f{\Delta t}{\eps^2}<1$ is a sufficient condition in the worst case, but is often far from computationally optimal
since we have used no information about the exact behavior of the singular measure $\nu$ around zero (see Remark \ref{rmk:bdd for coeff} for details). For typical L\'{e}vy measures from finance \cite{cont2005}, we only need  $\Delta t=O(\eps)$
(such as in the variance gamma case in our tests) or even $\Delta t$ independent of $\eps$ (for instance, for a Gaussian density) to guarantee $T$ is a contraction mapping.

Moreover, since in practice we can evaluate the discrete nonlocal operators $K^{\a,1}_{\eps,h}U^{n}_{j}$ and $B^{\a}_{\eps,h}U^{n}_{j}$ at all grid points in $O(N\log N)$ operations using a FFT (see e.g. \cite{{dHalluin2004}}), and in each iteration a sparse linear system is solved (in one dimension it is tridiagonal), the total complexity of the implicit scheme is still close to linear.

It is also worth pointing out that
even if we chose
explicit approximations for the nonlocal operators, due to the nonlinearity of $f$,  one still has to perform  iterations to solve  the resulting nonlinear equations. Because we only assume Lipschitz continuity but no higher regularity of $f$, we adopt the Picard iteration to solve for $U^{n}$.

\section{Convergence analysis}\l{sec:convergence}
In this section, we  establish the convergence of the numerical approximations in Section \ref{sec:scheme} to the viscosity solution of the HJBVI \eqref{eq:hjbvi}. 

We start by outlining the convergence analysis of the truncation of singular measures. 
It is not difficult to see that \eqref{eq:hjbvi_trun} is a consistent approximation of \eqref{eq:hjbvi} in the viscosity sense, such that the comparison principle of \eqref{eq:hjbvi} enables us to conclude that the solution of \eqref{eq:hjbvi_trun} converges to that of \eqref{eq:hjbvi} on compact sets as $\eps\to 0$.
In Appendix \ref{sec:jumps}, we provide an alternative proof by identifying the viscosity solution of \eqref{eq:hjbvi_trun} as the value function of a modified  control problem, and establish its convergence to the original value function \eqref{eq:nonlinear_control} using a probabilistic argument.

The remainder of this section thus focuses on the convergence analysis for the control discretization, the  approximation of  HJBVIs with switching systems, and the discrete approximations of the switching systems.

To simplify the notation, we will occasionally drop the terms  $\{K^\a u\}_{\a\in\bA}$ and $\{B^\a u\}_{\a\in\bA}$ in \eqref{eq:hjbvi_trun}, \eqref{eq:hjbvi_dc} and \eqref{eq:hjbvi_s}, and simply denote hem by $F(\x,u,Du,D^2u)=0$ in the sequel.

%
%
%
%

\subsection{Approximation by finite control sets}
In this section, we shall study approximations of HJBVI \eqref{eq:hjbvi} with a finite control set.  The following consistency result will be essential for our convergence analysis.
The proof is similar to \cite{reisinger2016} (who consider the case without nonlocal term, obstacle, and nonlinear driver) and included in Appendix \ref{app:cont-disc} for the convenience of the reader.

\begin{Lemma}\l{lem:consistency_control}
Under Assumption \ref{assum:coeff}, for any $\x\in \cQ_T$ and test function $\phi\in C^{1,2}(\bar{\cQ}_T)$  there exist functions 
$\omega_1(\x,\delta)$ and $\omega_2(\xi)$ such that
$\omega_1(\x,\delta)\to 0$ as $\delta\to 0$, $\omega_2(\xi)\to 0$ as $\xi\to 0$, and 
\bb
|F^\eps(\x,\phi(\x),D\phi(\x),D^2\phi(\x))-F^{\eps,\delta}(\x,\phi(\x)+\xi,D\phi(\x),D^2\phi(\x))|\le \omega_1(\x,\delta)+\omega_2(\xi),
\ee
where $\omega_1(\x,\delta)$ is locally Lipschitz continuous in $\x$ uniformly in $\delta$.
\end{Lemma}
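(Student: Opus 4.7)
The plan is to estimate $F^\eps - F^{\eps,\delta}$ term by term, via the elementary bound $|\min(a,b)-\min(c,d)|\le |a-c|+|b-d|$. First I would handle the obstacle part: the difference between $\phi(\x)-\zeta(\x)$ and $\phi(\x)+\xi-\zeta(\x)$ is simply $|\xi|$, which is absorbed into $\omega_2$. For the Hamiltonian part I would further split according to the elementary identity
\[
\inf_{\a\in\bA}H^{\a,\phi}(\x)-\inf_{\a\in\bA_\delta}H^{\a,\phi+\xi}(\x)
=\Big(\inf_{\a\in\bA}H^{\a,\phi}-\inf_{\a\in\bA}H^{\a,\phi+\xi}\Big)+\Big(\inf_{\a\in\bA}H^{\a,\phi+\xi}-\inf_{\a\in\bA_\delta}H^{\a,\phi+\xi}\Big),
\]
where $H^{\a,\psi}(\x):=-L^\a_\eps\psi(\x)-f^\a(\x,\psi(\x),(\tilde{\sigma}^\a)^T D\psi(\x),B^\a_\eps\psi(\x))$.

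The first bracket only sees the shift in the $y$-argument of $f^\a$, because $L^\a_\eps$ is linear and both $K^\a_\eps$ and $B^\a_\eps$ annihilate constants. Lipschitz continuity of $\hat{f}$ in $y$ (Assumption \ref{assum:coeff}(4.c)) therefore yields a bound $C|\xi|$, again contributing to $\omega_2$. For the second bracket, since $\bA_\delta\subset\bA$ the difference is controlled by $\sup_{\a\in\bA}\inf_{\tilde\a\in\bA_\delta}|H^{\a,\phi}(\x)-H^{\tilde\a,\phi}(\x)|$, so it suffices to show that $\a\mapsto H^{\a,\phi}(\x)$ is Lipschitz in $\a$ with a constant $C(\x)$ that depends locally Lipschitz continuously on $\x$.

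To prove this Lipschitz bound I would treat each building block of $H^{\a,\phi}$. For $A^\a_\eps\phi$ the Lipschitz dependence on $\a$ follows from Assumption \ref{assum:coeff}(1)--(2): $\sigma^\a_{ij}$ is Lipschitz in $\a$ and $\int_{|e|<\eps}|\eta^\a_i|^2\,\nu(de)$ is Lipschitz in $\a$ uniformly in $\eps$, by the bound $|\eta^\a-\eta^{\a'}|\le C|\a-\a'|(1\wedge|e|)$ combined with $|\eta^\a|\le C(1\wedge|e|)$ and the integrability of $(1\wedge|e|^2)$ against $\nu$. For $B^\a_\eps\phi$ the analogous argument with Assumption \ref{assum:coeff}(3) gives $|B^\a_\eps\phi-B^{\a'}_\eps\phi|\le C|\a-\a'|\,\|D\phi\|_{L^\infty(B)}$ on any ball $B$. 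The $f^\a$ term is Lipschitz in $\a$ by Assumption \ref{assum:coeff}(4.c).

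The delicate step, and the main obstacle, is $K^\a_\eps\phi$: a naive bound via Lipschitz continuity of $\phi$ would produce $\int_{|e|>\eps}(1\wedge|e|)\,\nu(de)$, which may diverge as $\eps\to 0$. I would instead write
\[
\phi(x+\eta^\a)-\phi(x+\eta^{\a'})-(\eta^\a-\eta^{\a'})\cdot D\phi(x)=\int_0^1\big(D\phi(x+\eta^{\a'}+s(\eta^\a-\eta^{\a'}))-D\phi(x)\big)\cdot(\eta^\a-\eta^{\a'})\,ds,
\]
whose integrand is bounded by $\|D^2\phi\|_{L^\infty(B)}\max(|\eta^\a|,|\eta^{\a'}|)|\eta^\a-\eta^{\a'}|\le C|\a-\a'|(1\wedge|e|^2)\|D^2\phi\|_{L^\infty(B)}$, giving an $\eps$-uniform Lipschitz constant after integration against $\nu$. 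Collecting these estimates yields $|\inf_{\bA}H^{\cdot,\phi}(\x)-\inf_{\bA_\delta}H^{\cdot,\phi}(\x)|\le C(\x)\delta$ with $C(\x)$ depending on $|D\phi(\x)|$, $|D^2\phi(\x)|$ and their values on a bounded neighborhood of $\x$. Since $\phi\in C^{1,2}(\bar{\cQ}_T)$ and the PDE coefficients are Lipschitz in $x$, this dependence is locally Lipschitz in $\x$ uniformly in $\delta$. Setting $\omega_1(\x,\delta):=C(\x)\delta$ and $\omega_2(\xi):=C|\xi|$ delivers the claim.
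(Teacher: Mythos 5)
Your proof is correct and follows essentially the same route as the paper's: split off $|\xi|$ for the obstacle term, absorb the $\xi$-shift in the Hamiltonian via Lipschitz continuity of $f$ in $y$ (the local and nonlocal operators annihilate constants), and control the control-discretization gap by a modulus of continuity in $\a$ of each building block, using a Taylor argument on the integrand of $K^\a_\eps$ to extract the $(1\wedge|e|^2)$ integrability against $\nu$. The only minor overclaim is Lipschitz dependence of $\tilde\sigma^\a$ on $\a$: the square root in \eqref{eq:small_diff} only gives uniform continuity in general (Lipschitz can fail where $\sigma^\a_{ii}$ vanishes), which is why the paper works with a general modulus $\omega_0(\x,\delta)$ rather than $C(\x)\delta$; this is harmless here since the lemma only requires $\omega_1(\x,\delta)\to 0$ as $\delta\to 0$.
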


Now we are ready to conclude the convergence of our approximation with finite control sets.
\begin{Theorem}
\label{theo:disc_cont}
Under Assumption \ref{assum:coeff}, let $u^\eps$ and $u^{\eps,\del}$ be the unique viscosity solution to \eqref{eq:hjbvi_trun} and \eqref{eq:hjbvi_dc}, respectively. Then
for fixed $\eps$ we have $u^{\eps,\del} \to u^\eps$ uniformly on compact sets as $\del\to 0$.
\end{Theorem}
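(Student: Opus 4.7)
The plan is to apply the Barles--Souganidis half-relaxed limits method, using the consistency estimate of Lemma \ref{lem:consistency_control} together with the comparison principle for \eqref{eq:hjbvi_trun}.

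First, I would establish that the family $\{u^{\eps,\delta}\}_{\delta>0}$ is \emph{uniformly bounded}, independent of $\delta$. Under Assumption \ref{assum:coeff}, $g$, $\zeta$ are bounded and $f(\a,t,x,0,0,0)$ is bounded, so by standard BSDE estimates (or equivalently from the PDE side, by comparing with constant super- and subsolutions of the form $\pm(C_1 + C_2 t)$ for suitable $C_1,C_2$ depending only on the Lipschitz/boundedness constants in Assumption \ref{assum:coeff}), one gets a bound $|u^{\eps,\delta}|\le M$ with $M$ independent of $\delta$. This makes the half-relaxed limits
\[
\bar u(\x) \coloneqq \limsup_{\delta\to 0,\,\y\to \x}{}^{*} u^{\eps,\delta}(\y),\qquad
\underline u(\x) \coloneqq \liminf_{\delta\to 0,\,\y\to \x}{}_{*} u^{\eps,\delta}(\y),
\]
well-defined, bounded, upper- and lower-semicontinuous, respectively, with $\underline u \le \bar u$ on $\bar\cQ_T$.

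Next I would show that $\bar u$ is a viscosity subsolution and $\underline u$ a viscosity supersolution of \eqref{eq:hjbvi_trun}. Fix $\x_0 \in \cQ_T$ and a test $\phi \in C^{1,2}(\bar\cQ_T)$ with $\bar u - \phi$ attaining a strict global maximum at $\x_0$ with value $0$. By a standard perturbation argument there exist sequences $\delta_n \to 0$ and $\x_n \to \x_0$ such that $u^{\eps,\delta_n}-\phi$ has a local max at $\x_n$ and $u^{\eps,\delta_n}(\x_n)\to \bar u(\x_0)$; setting $\xi_n \coloneqq u^{\eps,\delta_n}(\x_n)-\phi(\x_n)\to 0$, the subsolution property for $u^{\eps,\delta_n}$ yields
\[
F^{\eps,\delta_n}_*\bigl(\x_n,\phi(\x_n)+\xi_n,D\phi(\x_n),D^2\phi(\x_n),\{K_\eps^\a\phi(\x_n)\}, \{B_\eps^\a\phi(\x_n)\}\bigr)\le 0.
\]
Applying Lemma \ref{lem:consistency_control} and passing to the limit $n\to\infty$ (using continuity of $\phi$, its derivatives, and of the nonlocal operators $K_\eps^\a$, $B_\eps^\a$ in $\x$ and $\a$, together with $\omega_1(\x_n,\delta_n)+\omega_2(\xi_n)\to 0$), one recovers
\[
F^{\eps}_*\bigl(\x_0,\bar u(\x_0),D\phi(\x_0),D^2\phi(\x_0),\{K_\eps^\a\phi(\x_0)\},\{B_\eps^\a\phi(\x_0)\}\bigr)\le 0,
\]
i.e.\ $\bar u$ is a subsolution of \eqref{eq:hjbvi_trun}. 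The supersolution property of $\underline u$ is symmetric; the initial condition $\bar u(0,\cdot)\le g\le \underline u(0,\cdot)$ follows since $u^{\eps,\delta}(0,\cdot)=g$ for every $\delta$.

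Finally, the strong comparison principle for \eqref{eq:hjbvi_trun} (noted in Section \ref{sec:formulation}) gives $\bar u \le \underline u$ on $\bar\cQ_T$. Combined with $\underline u\le \bar u$, this yields $\bar u=\underline u=u^\eps$, which is then continuous, and the equality of the half-relaxed limits is equivalent to locally uniform convergence $u^{\eps,\delta}\to u^\eps$ as $\delta\to 0$, which is the claim. The main obstacle I expect is controlling the nonlocal integral terms $K_\eps^\a, B_\eps^\a$ uniformly in the limit: one needs to know that these quantities depend continuously on $(\x,\a)$ for smooth $\phi$, which is where the truncation to $|e|\ge\eps$ (so that $\nu_\eps$ is finite) and the Lipschitz bounds in Assumption \ref{assum:coeff}(2)--(3) are essential, and where the fact that Lemma \ref{lem:consistency_control}'s modulus $\omega_1$ is locally Lipschitz in $\x$ uniformly in $\delta$ is exploited to handle the $\x_n\to\x_0$ limit without losing the discrete-control approximation error.
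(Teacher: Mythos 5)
The paper itself omits this proof, stating only that it is a ``straightforward extension of the arguments in \cite{reisinger2016}''; those arguments are precisely the Barles--Souganidis half-relaxed limits method you outline, so your overall strategy and your use of Lemma~\ref{lem:consistency_control} together with the comparison principle for \eqref{eq:hjbvi_trun} match what is intended.

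There is, however, a genuine gap in your treatment of the initial line. You assert that $\bar u(0,\cdot)\le g\le \underline u(0,\cdot)$ ``follows since $u^{\eps,\delta}(0,\cdot)=g$ for every $\delta$,'' but half-relaxed limits do not preserve pointwise equalities on a lower-dimensional set: e.g.\ for $v^\delta(t,x)=g(x)+\min(t/\delta,1)$ one has $v^\delta(0,\cdot)=g$ and $|v^\delta|\le |g|_\infty+1$ uniformly in $\delta$, yet $\limsup^* v^\delta(0,x)=g(x)+1>g(x)$. So the limsup over $\delta\to 0$, $\y\to (0,x)$ can exceed $g(x)$ even though all the $u^{\eps,\delta}$ agree with $g$ at $t=0$. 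To close the argument you need one of: (i) a barrier argument giving equicontinuity in $t$ near $t=0$ uniformly in $\delta$ (which does follow from Assumption~\ref{assum:coeff}, using sub- and supersolutions of the form $g(x)\pm(L|x-x_0|+Kt+\epsilon)$, or equivalently uniform-in-$\delta$ BSDE a~priori estimates), so that the relaxed limits actually equal $g$ on $\{0\}\times\R^d$; or (ii) verify that $\bar u$ and $\underline u$ satisfy the sub-/supersolution inequalities at $t=0$ in the envelope sense (i.e.\ $\min(\bar u - g,\text{PDE part})\le 0$ and $\max(\underline u - g,\text{PDE part})\ge 0$), which is what the consistency argument naturally produces, and then invoke a strong comparison principle that accepts such viscosity boundary data rather than a pointwise ordering at $t=0$. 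This second route is what the paper implicitly does in the proof of Theorem~\ref{thm:conv_s}, where it distinguishes cases at $\hat\x_0\in\{0\}\times\R^d$ before invoking comparison.

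One further minor point: Definition~2.1 for the nonlocal HJBVI requires the test function to touch the sub-/supersolution at a \emph{global} extremum (this is essential for evaluating $K^\a_\eps\phi$ and $B^\a_\eps\phi$), so the ``local max at $\x_n$'' you extract from the strict global max of $\bar u - \phi$ must in fact be a global max of $u^{\eps,\delta_n}-\phi$; on the unbounded domain $\bar\cQ_T$ this extraction needs the usual compactification device (adding a small coercive penalization to $\phi$ outside a fixed ball, or exploiting the uniform boundedness of the family to restrict to a compact set). This is standard but should be mentioned when the definition is phrased in terms of global maxima.
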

The proof is a straightforward extension of the arguments in \cite{reisinger2016} and is hence omitted.

\subsection{Approximation by switching systems}\l{sec:conv_switching}

In this section, we study the approximation of \eqref{eq:hjbvi_dc} by switching systems. We adopt the following standard definition of a viscosity solution to switching systems of the form \eqref{eq:hjbvi_s} (see \cite{barles1997,biswas2010,reisinger2016} and references therein).

\begin{Definition}[Viscosity solution of  switching system]
A $\R^J$-valued  upper (resp.\ lower) semicontinuous function $U$ is said to be a viscosity subsolution (resp.\ supersolution) of \eqref{eq:hjbvi_s} if and only if 
for any point $\x_0$ and for any $\phi\in C^{1,2}(\bar{\cQ}_T)$ such that 
$U_j-\phi$ attains its global maximum (resp.\ minimum) at $\x_0$, one has
\begin{align*}
&F_{j*}^{\eps,\delta,c}(\x_0,U_j(\x_0), D\phi(\x_0), D^2\phi(\x_0),\{K_\eps^\a \phi(\x_0)\}_{\a\in\bA_\delta},\{B^\a_\eps \phi(\x_0)\}_{\a\in\bA_\delta},\{U_k(\x_0)\}_{k\not =j})\le 0\\
\big(resp. \q &F_j^{\eps,\delta,c\,*}(\x_0,U_j(\x_0), D\phi(\x_0), D^2\phi(\x_0),\{K_\eps^\a \phi(\x_0)\}_{\a\in\bA_\delta},\{B^\a_\eps \phi(\x_0)\}_{\a\in\bA_\delta}, \{U_k(\x_0)\}_{k\not =j})\ge 0\big).
\end{align*}
A continuous function is a viscosity solution of the  HJBVI \eqref{eq:hjbvi_s} if it is both a a viscosity sub- and supersolution.
\end{Definition}

Note that in the definition of the viscosity solution of $F_j$, the test function only replaces $U_j$ in the integrals and derivatives, while leaving the terms $\{U_k\}_{k\not=j}$ unchanged.

Now we present the comparison principle for bounded semicontinuous viscosity solutions of  \eqref{eq:hjbvi_s}, which not only implies the uniqueness of bounded viscosity solutions of \eqref{eq:hjbvi_s}, but is also essential for our convergence analysis. The proof will be given in Appendix \ref{sec:comparison}.
\begin{Theorem}\l{thm:comparision_s}
Let $U{=(U_1,U_2,...,U_J)}$ and $V {=(V_1,V_2,...,V_J)}$ be bounded viscosity sub- and supersolutions, respectively, of \eqref{eq:hjbvi_s} with $U(0,\cdot)\le V(0,\cdot)$. Then it holds under Assumption \ref{assum:coeff} that $U_j(\x)\le V_j(\x)$ for all $j=1,\ldots, J$.
\end{Theorem}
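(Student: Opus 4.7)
The plan is to argue by contradiction using the standard doubling-of-variables technique, adapted to handle the obstacle, the switching structure, and the nonlocal integro-differential operators simultaneously. Suppose $M := \max_j \sup_{\x \in \bar{\cQ}_T}(U_j(\x) - V_j(\x)) > 0$, attained at some pair $(j_0, \hat{\x})$. The hypothesis $U(0,\cdot) \le V(0,\cdot)$, together with a time penalization $\theta/(T-t)$ and a quadratic space penalization $\mu|x|^2$ to localize the supremum, will guarantee after regularization that the maximum of the penalized functional is attained at an interior point in time, not on $\{t=0\}$.

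Before doubling variables, I would perform a combinatorial reduction to rule out the switching and obstacle branches at $\hat{\x}$. If $U_{j_0}(\hat{\x}) \le \cM_{j_0}U(\hat{\x})$, then some $j_1 \ne j_0$ satisfies $U_{j_1}(\hat{\x}) = U_{j_0}(\hat{\x}) + c$; combined with the supersolution property $V_{j_0}(\hat{\x}) \ge V_{j_1}(\hat{\x}) - c$, this yields
\[
(U_{j_1} - V_{j_1})(\hat{\x}) \ge U_{j_0}(\hat{\x}) + c - V_{j_0}(\hat{\x}) - c = M,
\]
so $(j_1, \hat{\x})$ is again a maximum but with strictly larger value $U_{j_1}(\hat{\x}) = U_{j_0}(\hat{\x}) + c$. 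Iterating produces values $U_{j_n}(\hat{\x}) = U_{j_0}(\hat{\x}) + nc \to \infty$, contradicting boundedness of $U$. If instead $U_{j_0}(\hat{\x}) \le \zeta(\hat{\x})$, then the supersolution inequality $V_{j_0}(\hat{\x}) \ge \zeta(\hat{\x})$ immediately yields $M \le 0$. Therefore both inequalities are strict at $\hat{\x}$, and by continuity they remain strict at the doubled maximum point, so the HJB branch is the one activated by the subsolution inequality when applied via a $C^{1,2}$ test function.

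Now I would double the variables via
\[
\Phi_{\eps,\mu}(t, x, y) = U_{j_0}(t, x) - V_{j_0}(t, y) - \frac{|x-y|^2}{2\eps} - \mu(|x|^2 + |y|^2) - \frac{\theta}{T-t},
\]
and invoke the nonlocal version of the Crandall--Ishii lemma (in the spirit of the techniques employed in \cite{biswas2010}) to obtain matrices $X, Y$ and sub/supersolution inequalities for $U_{j_0}$ and $V_{j_0}$ at the doubled maximum point $(\hat t, \hat x, \hat y)$, with standard matrix inequalities between $X$ and $Y$ providing control of $\tr(\sigma^{\a_{j_0}}(\sigma^{\a_{j_0}})^T(X-Y))$ in terms of $|\hat x - \hat y|^2/\eps$. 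The nonlocal operators $K^{\a_{j_0}}_\eps$ and $B^{\a_{j_0}}_\eps$ are estimated by splitting the integration domain into $\{|e| < r\}$, where smoothness of the test function yields a second-order Taylor remainder vanishing with $r$, and $\{|e| \ge r\}$, where the maximality of $\Phi_{\eps,\mu}$ together with Lipschitz continuity of $\eta^{\a_{j_0}}$ and boundedness of $\gamma$ gives quantities controlled by $|\hat x - \hat y|^2/\eps$. Subtracting the two HJB inequalities and using Lipschitz continuity of $b, \sigma, \eta, \gamma$ in $x$, together with Lipschitz continuity of the driver in $(y, p, k)$ from Assumption \ref{assum:coeff}(4.c) and crucially the monotonicity in $y$ from Assumption \ref{assum:coeff}(4.b), one extracts a coercive term $CM$ of the correct sign. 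Letting $r \to 0$ first, then $\eps, \mu, \theta \to 0$ in the standard order, yields $CM \le 0$ and contradicts $M > 0$.

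The main obstacle will be the coupled treatment of the two nonlocal operators $K^{\a_{j_0}}_\eps$ and $B^{\a_{j_0}}_\eps$, since the latter feeds into the driver $f$ through its last argument and must therefore be estimated in a way compatible with the Lipschitz constant of $f$ in $k$; this requires careful bookkeeping of the splitting parameter $r$ relative to $\eps$ so that the contribution of $B^{\a_{j_0}}$ inside $f$ does not overwhelm the coercive gain from the monotonicity assumption. A secondary subtlety is ensuring that the combinatorial switching reduction performed at the undoubled maximum $\hat{\x}$ transfers to the doubled problem: this is handled by the continuity of $U_j, V_k, \cM_j U, \zeta$ and the strictness of the inequalities $U_{j_0}(\hat{\x}) > \cM_{j_0}U(\hat{\x})$ and $U_{j_0}(\hat{\x}) > \zeta(\hat{\x})$, which persist under the small perturbations introduced by the penalization.
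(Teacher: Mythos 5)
Your proposal follows essentially the same strategy as the paper's proof — doubling of variables, a no-loop argument to neutralize the switching branch, an obstacle-branch dispatch, the nonlocal Jensen--Ishii lemma, and the coercive term extracted from the monotonicity of $\hat f$ in $y$. However, there are two substantive issues in the way the pieces are assembled.

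The most significant concern is the order of the no-loop argument relative to the doubling. You perform the combinatorial reduction at the \emph{undoubled} maximum $(j_0,\hat\x)$, freeze $j_0$, and then double only the space variable for that single index. This is inconsistent with the structure of the rest of the argument: the doubled, penalized maximum $(\hat t,\hat x,\hat y)$ that you later analyze need not converge to your $\hat\x$ as $\eps,\mu\to 0$ — the limit is \emph{some} maximizer of $U_{j_0}-V_{j_0}$, not necessarily $\hat\x$ — and since $U_j$ and $V_j$ are only semicontinuous, your appeal to "continuity" to carry the strict inequalities $U_{j_0}>\cM_{j_0}U$ and $U_{j_0}>\zeta$ across from $\hat\x$ to $(\hat t,\hat x)$ is not justified (upper semicontinuity of $\cM_{j_0}U$ works in your favour, but upper semicontinuity of $U_{j_0}$ itself does not automatically give you $U_{j_0}(\hat t,\hat x)\to U_{j_0}(\hat\x)$ unless $(\hat t,\hat x)\to\hat\x$, which you have not ensured). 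Moreover, to even define $\hat\x$ you must assume the undoubled supremum is attained, which is not automatic for bounded USC minus LSC on the unbounded domain $\bar\cQ_T$. The paper avoids all of this by keeping the index $j$ as a free variable in the doubled, penalized functional $\psi^{\eps,\rho}_j(t,s,x,y)$, obtaining the doubled maximizer $(j^{\eps,\rho},t^{\eps,\rho},s^{\eps,\rho},x^{\eps,\rho},y^{\eps,\rho})$, and then applying the no-loop argument \emph{there} (its Lemma~\ref{lemma:system}, following Lemma~4.1 of \cite{biswas2010}). This is exactly the step your plan needs, and it cannot be imported from the undoubled level.

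A secondary slip: the equation has its "boundary" data at $t=0$, so the time penalization to push the maximizer off the initial slice must be of the form $\theta/t$, not $\theta/(T-t)$ as you wrote; the latter keeps the max away from $t=T$, where the equation does in fact hold. The paper in fact dispenses with a time penalization entirely by treating the $t^\rho=0$ case separately; either route is fine, but as written your penalization is pointed the wrong way. Also minor: in the no-loop step, the switching constraint gives $U_{j_1}(\hat\x)\ge U_{j_0}(\hat\x)+c$, not equality; this does not affect the argument. With the no-loop step moved to the doubled level (maximizing over $j$ there, as in Lemma~\ref{lemma:system}), and with the time penalization fixed, your plan becomes essentially the proof in the paper.
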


The following theorem demonstrates the convergence of the switching system to the finite control HJBVI \eqref{eq:hjbvi_dc} as the switching cost goes to $0$.
Convergence with order $1/3$ is proved in \cite{biswas2010} by a different technique, for nonlocal Bellman equations without obstacles and nonlinear source terms.

We momentarily assume the switching system \eqref{eq:hjbvi_s} to admit a viscosity solution bounded independently of the (small enough) switching cost $c$. We give a constructive proof of existence through our numerical schemes in Section \ref{sec:approx_switching}.
\begin{Theorem}\l{thm:conv_s}
Under Assumption \ref{assum:coeff}, 
let $U^{\eps,\delta, c}=(U^{\eps,\delta, c}_1,\ldots, U^{\eps,\delta, c}_J)$ and $u^{\eps,\delta}$ be the viscosity solution of \eqref{eq:hjbvi_s} and \eqref{eq:hjbvi_dc}, respectively.  Then for fixed $\eps, \delta>0$, we have for each $j=1,\ldots, J$ that
$
U^{\eps,\delta, c}_j\to u^{\eps,\delta}
$
uniformly on compact sets as $c\to 0$.
\end{Theorem}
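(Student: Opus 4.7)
The plan is to use the Barles--Perthame half-relaxed limits method together with the comparison principle for the finite-control HJBVI \eqref{eq:hjbvi_dc}. Define the upper and lower semicontinuous envelopes
\[
\bar u(\x) \coloneqq \limsup_{c\to 0,\, \y\to \x,\, 1\le j\le J} U^{\eps,\delta,c}_j(\y), \qquad
\underline u(\x) \coloneqq \liminf_{c\to 0,\, \y\to \x,\, 1\le j\le J} U^{\eps,\delta,c}_j(\y),
\]
which are bounded thanks to the standing uniform-in-$c$ bound on $U^{\eps,\delta,c}$. The initial data $U^{\eps,\delta,c}_j(0,\cdot)=g$ give $\bar u(0,\cdot)=\underline u(0,\cdot)=g$, so once I show that $\bar u$ is a viscosity subsolution and $\underline u$ a viscosity supersolution of \eqref{eq:hjbvi_dc}, the comparison principle from Section \ref{sec:formulation} yields $\bar u\le u^{\eps,\delta}\le \underline u$. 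Combined with the trivial $\underline u\le \bar u$ this gives $\bar u=\underline u=u^{\eps,\delta}$, which is equivalent to locally uniform convergence $U^{\eps,\delta,c}_j\to u^{\eps,\delta}$.

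A key preliminary observation is that the supersolution part of \eqref{eq:hjbvi_s} encodes the pointwise bound $U^{\eps,\delta,c}_j \ge \cM_j U^{\eps,\delta,c}$, i.e.\ $U^{\eps,\delta,c}_j \ge U^{\eps,\delta,c}_k - c$ for every $k\ne j$; swapping indices shows $|U^{\eps,\delta,c}_j - U^{\eps,\delta,c}_k|\le c$. For the subsolution direction, at a strict global maximum $\x_0$ of $\bar u-\phi$ the standard perturbation and localisation argument produces sequences $c_n\downarrow 0$, $\x_n\to\x_0$ and indices $j_n$, chosen to realise $\max_k U^{\eps,\delta,c_n}_k(\x_n)$, such that $U^{\eps,\delta,c_n}_{j_n}-\phi$ attains a local maximum near $\x_n$ with $U^{\eps,\delta,c_n}_{j_n}(\x_n)\to \bar u(\x_0)$. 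At such $j_n$ the switching term satisfies $U^{\eps,\delta,c_n}_{j_n}-\cM_{j_n} U^{\eps,\delta,c_n}\ge c_n>0$ and thus drops out of the inner minimum, leaving the subsolution inequality $\min\bigl[U^{\eps,\delta,c_n}_{j_n}-\zeta,\; -\partial_t\phi - L^{\a_{j_n}}_\eps\phi - f^{\a_{j_n}}_\eps[\phi]\bigr]\le 0$ at the appropriate point. Passing $n\to\infty$ (using Lipschitz continuity of $f$ from Assumption \ref{assum:coeff} and continuity of the nonlocal operators acting on the smooth test function $\phi$), and bounding $\inf_{\a\in\bA_\delta}(-L^\a_\eps\phi-f^\a_\eps[\phi])\le -L^{\a_{j_n}}_\eps\phi-f^{\a_{j_n}}_\eps[\phi]$, delivers the subsolution inequality for \eqref{eq:hjbvi_dc}.

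The main obstacle is the supersolution direction, since \eqref{eq:hjbvi_dc} requires the PDE inequality for \emph{every} $\a\in\bA_\delta$, whereas at a minimum of $\underline u-\phi$ the natural approximating index is a \emph{minimising} one which yields only a single PDE inequality. Here I exploit the uniform closeness $|U^{\eps,\delta,c}_j-U^{\eps,\delta,c}_k|\le c$: for each \emph{fixed} $j\in\{1,\ldots,J\}$ this closeness, together with a vanishing perturbation of $\phi$ ensuring strictness, implies that $U^{\eps,\delta,c_n}_j-\phi$ attains a local minimum at some $\x_n^{(j)}\to \x_0$ with $U^{\eps,\delta,c_n}_j(\x_n^{(j)})\to \underline u(\x_0)$. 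The supersolution property of \eqref{eq:hjbvi_s} at $\x_n^{(j)}$ simultaneously forces the obstacle, the switching, and the PDE inequalities for index $j$; the first and third survive in the limit (the switching inequality being automatic), giving $\underline u(\x_0)\ge \zeta(\x_0)$ and $-\partial_t\phi(\x_0) - L^{\a_j}_\eps\phi(\x_0) - f^{\a_j}_\eps[\phi](\x_0)\ge 0$ for \emph{every} $\a_j\in\bA_\delta$, which is exactly the supersolution inequality for \eqref{eq:hjbvi_dc}. Invoking the comparison principle then completes the proof.
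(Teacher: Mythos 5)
Your overall strategy is the same as the paper's: establish the uniform closeness $U_j\ge U_k-c$ from the supersolution property, take Barles--Perthame half-relaxed limits, show the upper limit is a subsolution and the lower limit a supersolution of \eqref{eq:hjbvi_dc}, and invoke the strong comparison principle. The supersolution half matches the paper's argument, as does the use of the closeness bound to collapse the $J$ relaxed limits to a single function.

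There is, however, a gap in your subsolution step. You choose $j_n$ to maximise $U_k^{c_n}(\x_n)$ at a specific point $\x_n$, note that $U_{j_n}^{c_n}-\cM_{j_n}U^{c_n}\ge c_n>0$ at $\x_n$, and then apply the subsolution inequality at a point ``near $\x_n$'' where $U_{j_n}^{c_n}-\phi$ attains its (local) maximum. But the inactivity of the switching constraint is only guaranteed at $\x_n$, not at the nearby maximiser where you actually test the subsolution property; the uniform bound $|U_j-U_k|\le c_n$ only gives $U_{j_n}-\cM_{j_n}U\ge 0$ there, which is not enough to drop the switching term from the inner minimum. The paper avoids this by a more careful choice: for each $j$ it takes the point $\hat\x^j_m$ where $U_j^{c_m}-\phi$ attains its global maximum, and then picks $j_m$ so that $(U_{j_m}^{c_m}-\phi)(\hat\x^{j_m}_m)=\max_j\max_\x(U_j^{c_m}-\phi)(\x)$. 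At the resulting point one gets $(U_{j_m}^{c_m}-\phi)(\hat\x^{j_m}_m)\ge (U_k^{c_m}-\phi)(\hat\x^{j_m}_m)$ for every $k$, hence $U_{j_m}^{c_m}(\hat\x^{j_m}_m)\ge U_k^{c_m}(\hat\x^{j_m}_m)$ and therefore $U_{j_m}^{c_m}-\cM_{j_m}U^{c_m}\ge c_m>0$ \emph{at the very point where the subsolution inequality is applied}. Replacing your selection rule by this one closes the gap; the rest of your argument then goes through.

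As a secondary point, your claim $\bar u(0,\cdot)=\underline u(0,\cdot)=g$ is not immediate from the definition of the relaxed limits and deserves a word of justification (the paper treats the boundary behaviour inside the case analysis of the subsolution proof, rather than asserting it a priori).
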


\begin{proof}
Since  $\eps$ and  $\delta$ are fixed for our  analysis, we shall omit the dependence on  $\eps$ and  $\delta$, and simply denote by $U^{c}$  the  solution of \eqref{eq:hjbvi_s}.
Consider a sequence of switching costs $c_m\to 0$ as $m\to \infty$, and the corresponding viscosity solution $U^{c_m}=(U^{c_m}_1,\ldots, U^{c_m}_J)$. We shall first prove by contradiction that 
\bb\l{eq:cM}
U^{c_m}_j(\x)\ge \cM_j U^{c_m},\q \x\in \cQ_T, \; j=1,\ldots, J.
\ee
Suppose the statement is false, then  there would exist $k\not= j$ and $\x_0\in \cQ_T$ such that $U^{c_m}_j(\x_0)< U^{c_m}_k(\x_0)-c_m$. We then obtain from the continuity of $U_j^{c_m}$ and $U^{c_m}_k$ that there exists  a nonempty open ball $B$ around $\x_0$ such that
$$
U^{c_m}_j(\x)< U^{c_m}_k(\x)-c_m, \q \x\in B.
$$
On the other hand, 
there exists a $C^2$ function $\phi$ such that $U^{c_m}_j-\phi$ attains its minimum at some point in $B$, say $\x_1$. Hence we deduce from the fact that $U^{c_m}_j$ is a supersolution that
$$
U^{c_m}_j(\x_1)\ge \cM_j U^{c_m}(\x_1)\ge U^{c_m}_k(\x_1)-c_m,
$$
which leads to a contradiction. 

We now introduce the following functions through a relaxed limit: for $j=1,\ldots, J$,
\begin{align}
\overline{U}_j(x)=\lim_{r\to \infty}\sup_{m>r}\sup_{|\y-\x|<1/r}U^{c_m}_j(\y),\q
\ul{U}_j(x)=\lim_{r\to \infty}\inf_{m>r}\inf_{|\y-\x|<1/r}U^{c_m}_j(\y).
\end{align}
It is not hard to check $\overline{U}_1=\ldots=\overline{U}_J\equiv\overline{U}$ and $\ul{U}_1=\ldots=\ul{U}_J\equiv\ul{U}$. In fact, for any given $j,k\in \{1,\ldots, J\}$, $j\not =k$, $\x\in \cQ_T$, and $m,r\in \N$,  we obtain from \eqref{eq:cM} that
$U^{c_m}_j(\y)\ge U^{c_m}_k(\y)-c_m$ for $\y\in \cQ_T$, and hence 
$$
\sup_{m>r}\sup_{|\y-\x|<1/r}U^{c_m}_j(\y)\ge \sup_{m>r}\sup_{|\y-\x|<1/r}U^{c_m}_k(\y)-\sup_{m>r}c_m.
$$
Letting $r\to\infty$ leads to the fact that $\overline{U}_j\ge \overline{U}_k$ for all $j\not= k$. The statement for $\{\ul{U}_j\}$ can be shown similarly.

Since it is clear that $\overline{U}$ and $\ul{U}$ is bounded upper and lower semicontinuous, respectively, we now aim to show $\overline{U}$ and $\ul{U}$ is respectively a sub- and supersolution of \eqref{eq:hjbvi_dc}. Then  the strong comparision principle gives us $\overline{U}\le \ul{U}$, which implies $U=\overline{U}= \ul{U}$ is the unique viscosity solution of \eqref{eq:hjbvi_dc}. Uniform convergence on compact sets follows from a variation of Dini's theorem (See Remark 6.4 in \cite{crandall1992}).

We start by showing $\overline{U}$ is a subsolution of \eqref{eq:hjbvi_dc}. 
Let $\phi\in C^{1,2}$ and $\overline{U}-\phi$ have a strict global maximum at $\hat{\x}_0\in \bar{\cQ}_T$, then there will be a sequence  $c_m\to 0$ such that for each $j\in \{1,\ldots, J\}$, we have $\hat{\x}^j_m\to \hat{\x}_0$, $U_j^{c_m}(\hat{\x}^j_m)\to\overline{U}(\hat{\x}_0)$, and $U_j^{c_m}-\phi$ attains a global maximum at $\hat{\x}^j_m$. 
Since $U_j^{c_m}$ is a subsolution of \eqref{eq:hjbvi_s} with $c_m$, 
if we have $\hat{\x}_0\in \{0\}\t \R^d$, $U_j^{c_m}(\hat{\x}^j_m)\le g(\hat{\x}^j_m)$ for infinitly many $m$ and a fixed $j$, then it is clear that $\overline{U}(\hat{\x}_0)\le g(\hat{\x}_0)$. Therefore,
without loss of generality, we assume for all $m$ and $j$ that
\begin{align}
\min\bigg[U_j^{c_m}(\hat{\x}^j_m)-\zeta(\hat{\x}^j_m),\; \min\big(&\phi_{t}(\hat{\x}^j_m)-L_\eps^{\a_j}  \phi(\hat{\x}^j_m)-f^{\a_j}(\hat{\x}^j_m,U^{c_m}_{j}(\hat{\x}^j_m),\tilde{\sigma}^{\a_j}\cdot D\phi(\hat{\x}^j_m),B_\eps^{\a_j} \phi(\hat{\x}^j_m)); \nb\\
& U^{c_m}_{j}(\hat{\x}^j_m)-\cM_jU^{c_m}(\hat{\x}^j_m)\big)\bigg]\le 0. \l{eq:subsoln_s1}
\end{align}

We have two cases. If there exists $j\in\{1,\ldots, J\}$ and a subsequence of $c_m$ such that $U_j^{c_m}(\hat{\x}^j_m)-\zeta(\hat{\x}^j_m)\le 0$, then by passing to the limit $m\to \infty$, we have $\overline{U}(\hat{\x}_0)-\zeta(\hat{\x}_0)\le 0$. Otherwise, by passing to subsequence, without loss of generality we can assume $U_j^{c_m}(\hat{\x}^j_m)-\zeta(\hat{\x}^j_m)>0$ holds for all $j$ and $m$. Then for each $m\in \N$, we can choose $j_m\in \{1,\ldots, J\}$ and $\hat{\x}^{j_m}_m$ such that
$$
(U_{j_m}^{c_m}-\phi)(\hat{\x}^{j_m}_m)=\max_{j=1,\ldots, J}(U_{j}^{c_m}-\phi)(\hat{\x}^{j}_m)=\max_{j=1,\ldots, J}\max_{\x}(U_{j}^{c_m}-\phi)(\x), \l{eq:jm}
$$
and deduce from \eqref{eq:subsoln_s1} that 
\begin{align}
\min\big(\phi_{t}(\hat{\x}^{j_m}_m)-L_\eps^{\a_{j_m}}  \phi(\hat{\x}_m)-&f^{\a_{j_m}}(\hat{\x}^{j_m}_m,U^{c_m}_{{j_m}}(\hat{\x}^{j_m}_m),\tilde{\sigma}^{\a_{j_m}}\cdot D\phi(\hat{\x}^{j_m}_m),B_\eps^{\a_{j_m}}\phi(\hat{\x}^{j_m}_m)); \nb\\
 &U^{c_m}_{{j_m}}(\hat{\x}^{j_m}_m)-\cM_{j_m}U^{c_m}(\hat{\x}^{j_m}_m)\big)\le 0. \l{eq:subsoln_s2}
\end{align}

Our choice of $j_m$ implies $(U_{j_m}^{c_m}-\phi)(\hat{\x}^{j_m}_m)\ge (U_{k}^{c_m}-\phi)(\hat{\x}^{j_m}_m)$ for all $k\not= j_m$, and thus  $U^{c_m}_{{j_m}}(\hat{\x}^{j_m}_m)>\cM_{j_m}U^{c_m}(\hat{\x}^{j_m}_m)$. Consequently we obtain from \eqref{eq:subsoln_s2} that 
$$
\phi_{t}(\hat{\x}^{j_m}_m)-L_\eps^{\a_{j_m}}  \phi(\hat{\x}_m)-f^{\a_{j_m}}(\hat{\x}^{j_m}_m,U^{c_m}_{{j_m}}(\hat{\x}^{j_m}_m),\tilde{\sigma}^{\a_{j_m}}\cdot D\phi(\hat{\x}^{j_m}_m),B_\eps^{\a_{j_m}} \phi(\hat{\x}^{j_m}_m))\le 0.
$$
Since we only have finite many choices of $j_m$, by passing to a further subsequence if necessary, we can assume that $j_m\to j_0$, then letting $m\to \infty$ and using the continuity of the equation, we have 
$$
\phi_{t}(\hat{\x}_0)-L_\eps^{\a_{j_0}}  \phi(\hat{\x}_0)-f^{\a_{j_0}}(\hat{\x}_0,\overline{U}(\hat{\x}_0),\tilde{\sigma}^{\a_{j_0}}\cdot D\phi(\hat{\x}_0),B_\eps^{\a_{j_0}} \phi(\hat{\x}_0))\le 0.
$$
Since $\a_{j_0}\in \bA_\delta$ is an admissible control, we obtain 
$$
\min_{\a\in\bA_\delta}\big\{\phi_{t}(\hat{\x}_0)-L_\eps^{\a}  \phi(\hat{\x}_0)-f^{\a_{j}}(\hat{\x}_0,\overline{U}(\hat{\x}_0),\tilde{\sigma}^{\a}\cdot D\phi(\hat{\x}_0),B_\eps^\a \phi(\hat{\x}_0))\big\}\le 0,
$$
and conclude that $\overline{U}$ is a subsolution of \eqref{eq:hjbvi_s}.

We now proceed to show $\ul{U}$ is a supersolution. 
If $\phi\in C^{1,2}$ and $\overline{U}-\phi$ has a strict global mimimum at $\hat{\x}_0\in \bar{\cQ}_T$, then for any given   $j\in \{1,\ldots, J\}$, there will be sequences  $c_m\to 0$, $\hat{\x}_m\to \hat{\x}_0$, $U_j^{c_m}(\hat{\x}_m)\to\ul{U}(\hat{\x}_0)$, and $U_j^{c_m}-\phi$ attains a global mimimum at $\hat{\x}_m$. Using the fact that $U_j^{c_m}$ is asupersolution to \eqref{eq:hjbvi_s}, we have (by ignoring the term $U^{c_m}_{j}(\hat{\x}^j_m)-\cM_jU^{c_m}(\hat{\x}^j_m)$):
\begin{align}
\min\bigg[U_j^{c_m}(\hat{\x}_m)-\zeta(\hat{\x}_m),\; \phi_{t}(\hat{\x}_m)-L_\eps^{\a_j}  \phi(\hat{\x}_m)-f^{\a_j}(\hat{\x}_m,U^{c_m}_{j}(\hat{\x}_m),\tilde{\sigma}^{\a_j}\cdot D\phi(\hat{\x}_m),B_\eps^{\a_j} \phi(\hat{\x}_m))\bigg]\ge 0, \nb
\end{align}
then passing $m\to \infty$ enables us to conclude for any $j\in \{1,\ldots, J\}$,
\begin{align}
\min\bigg[\ul{U}(\hat{\x}_0)-\zeta(\hat{\x}_0),\; \phi_{t}(\hat{\x}_0)-L_\eps^{\a_j}  \phi(\hat{\x}_0)-f^{\a_j}(\hat{\x}_0,\ul{U}(\hat{\x}_0),\tilde{\sigma}^{\a_j}\cdot D\phi(\hat{\x}_0),B_\eps^{\a_j} \phi(\hat{\x}_0))\bigg]\ge 0, \nb
\end{align}
which completes our proof.
\end{proof}

\subsection{General discrete approximation to the switching system}\l{sec:approx_switching}
In this section, we  establish the convergence of the piecewise constant policy approximation of \eqref{eq:scheme} to the solution of the switching system \eqref{eq:hjbvi_s}. We will first summarize all the required conditions to guarantee the convergence, and perform the analysis under these assumptions. Then we will demonstrate in Section \ref{sec:implicit} that these conditions are in fact satisfied by the numerical scheme \eqref{eq:implicit} proposed  in Section \ref{sec:scheme} .

We assume  the  scheme \eqref{eq:scheme} satisfies the following  conditions  introduced in \cite{reisinger2016}:
\begin{Condition}\l{assum:scheme}
\begin{enumerate}[(1)]
\item  (Positive interpolation.) Let $\tilde{U}^n_{k,i(j)}$ be the interpolant of the $k$-th grid onto the $i$-th point $\x_{j,i}^n$ of the $j$-th grid, and $N^k(j,i,n)$ be the
neighbours\footnote{``Neighbours'' can be any a set of indices $a$ such that $\x^n_{k,a} \rightarrow \x^n_{j,i}$.}
to the point $\x_{j,i}^n$ on the $k$-th grid $\Omega_{k,h}$. Then there exist  weights $\{\omega^n_{k,i(j),a}\}_{a\in N^k(j,i,n)}$ satisfying $\omega^n_{k,i(j),a}\ge 0$ and $\sum_{a\in N^k(j,i,n)}\omega^n_{k,i(j),a}=1$, such that we can write
\bb\l{eq:interpolation}
\tilde{U}^n_{k,i(j)}=\sum_{a\in N^k(j,i,n)}\omega^n_{k,i(j),a} U^n_{k,a}.
\ee
\item (Weak monotonicity.) The scheme \eqref{eq:scheme} is monotone with respect to $U^n_{j,i}$ and  $\tilde{U}^{n}_{k,i(j)}$, i.e., if 
$$
V^n_{j,i}\ge U^n_{j,i}, \q \forall (i,j,n); \q  \tilde{V}^{n}_{k,i(j)}\ge \tilde{U}^{n}_{k,i(j)}, \q \fa (i,k,n),
$$
then we have
\bb
G_j(\x^n_{j,i},h,U^{n+1}_{j,i}, \{V^{b+1}_{j,a}\}_{\textnormal{$(a,b)\!\not =\!(i,n)$}}, \{
\tilde{V}^n_k\}_{k\not =j})
\le G_j(\x^n_{j,i},h,U^{n+1}_{j,i}, \{U^{b+1}_{j,a}\}_{\textnormal{$(a,b)\!\not =\!(i,n)$}}, \{
\tilde{U}^n_k\}_{k\not =j}).
\ee
\item ($\ell^\infty$ stability.) The solution $U^{n+1}_{j,i}$ of the scheme \eqref{eq:scheme} exists and is bounded uniformly in $h$ and $c$.
\item (Consistency.) Let $\eps,\delta, c$ be fixed. For any test functions $\phi_j\in C^{1,2}(\bar{\cQ}_T)$ and continuous $\varphi_k$, 
there exist function $\omega_1(h)$ and $\omega_2(\xi)$, {possibly depending on $\eps$},  such that $\omega_1(h)\to 0$  as $h\to 0$, $\omega_2(\xi)\to 0$  as $\xi\to 0$, and 
\begin{align}\l{eq:consistent_scheme}
\begin{split}
|&G_j(\x^{n+1}_{j,i},h,\phi^{n+1}_{j,i}+\xi, \{\phi^{b+1}_{j,a}\}_{\textnormal{$(a,b)\!\not =\!(i,n)$}}+\xi, \{\tilde{\varphi}^n_k\}_{k\not =j})\\
&-F^{\eps,\delta, c}_j(\x^{n+1}_{j,i},\phi_j(\x^{n+1}_{j,i}), D\phi_j(\x^{n+1}_{j,i}), D^2\phi_j(\x^{n+1}_{j,i}),\{\tilde{\varphi}_k(\x^{n}_{j,i})\}_{k\not =j})|\le \omega_1(h)+\omega_2(\xi).
\end{split}
\end{align}
\end{enumerate}
\end{Condition}

\begin{Remark}
As pointed out in \cite{reisinger2016}, Condition \ref{assum:scheme} (1)-(2) are weaker than the standard condition that  the scheme is monotone in $U^n_{k,\a}$.
By  only requiring that the interpolation has positive coefficients and that the numerical scheme is monotone in the interpolant $\tilde{U}^n_{k,\a}$, we are allowing the usage of high order nonlinear interpolations among different grids (e.g., the monotonicity preserving interpolations in \cite{fritsch1980}).

Also note the contrast to the linear interpolant \eqref{eq:lininterp} used in \eqref{eq:K_mono} and \eqref{eq:B_mono} for the construction of a monotone approximation to the integral operators.
\end{Remark}

We now present the convergence of the discrete approximation to the switching system. 
\begin{Theorem}\l{thm:conv_numerical}
Under Assumptions \ref{assum:coeff},  the solution to any scheme of the form \eqref{eq:scheme} satisfying Condition \ref{assum:scheme} converges to the viscosity solution of \eqref{eq:hjbvi_s} uniformly on bounded domains.
\end{Theorem}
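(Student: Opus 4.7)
The plan is to follow the standard Barles--Souganidis half-relaxed limits approach, adapted to the switching-system setting and the piecewise-constant-policy timestepping structure.

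First, using the $\ell^\infty$ stability from Condition \ref{assum:scheme}(3), I would define, for each component $j\in\{1,\ldots,J\}$, the upper and lower relaxed limits
\begin{equation*}
\overline{U}_j(\x) = \limsup_{h\to 0,\ \x^{n}_{j,i}\to \x} U^{n}_{j,i},
\qquad
\underline{U}_j(\x) = \liminf_{h\to 0,\ \x^{n}_{j,i}\to \x} U^{n}_{j,i},
\end{equation*}
which are bounded upper/lower semicontinuous functions (independent of $c$, since the bound in (3) is uniform in $h$ and $c$). The goal is to prove that $\overline{U}=(\overline{U}_1,\ldots,\overline{U}_J)$ is a viscosity subsolution and $\underline{U}=(\underline{U}_1,\ldots,\underline{U}_J)$ a supersolution of the switching system \eqref{eq:hjbvi_s}; then Theorem \ref{thm:comparision_s} (the comparison principle for the switching system) and the initial condition built into the scheme yield $\overline{U}_j\le \underline{U}_j$, and hence equality with the unique continuous viscosity solution. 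Uniform convergence on bounded sets then follows from the standard Dini-type argument (as in Remark 6.4 of \cite{crandall1992}). Incidentally, this also furnishes the constructive existence proof for \eqref{eq:hjbvi_s} that was assumed in Theorem \ref{thm:conv_s}.

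For the subsolution property at a point $\hat{\x}_0\in \cQ_T$ where $\overline{U}_j-\phi$ has a strict global maximum (with $\phi\in C^{1,2}$), I would extract a sequence of grid points $\x^{n_m+1}_{j,i_m}\to \hat{\x}_0$ at which $U^{n_m+1}_{j,i_m}-\phi(\x^{n_m+1}_{j,i_m})$ attains a local maximum on the discrete grid, with $U^{n_m+1}_{j,i_m}\to \overline{U}_j(\hat{\x}_0)$. Writing $\xi_m := U^{n_m+1}_{j,i_m}-\phi(\x^{n_m+1}_{j,i_m})\to 0$, the positive-interpolation property \eqref{eq:interpolation} combined with the weak monotonicity of Condition \ref{assum:scheme}(2) lets me replace all the neighbouring values $U^{b+1}_{j,a}$ by $\phi(\x^{b+1}_{j,a})+\xi_m$, and the interpolated values $\tilde U^{n_m}_{k,i_m(j)}$ by an interpolant of $\overline{U}_k+\xi_m'$ (with a small error), while preserving the sign of the scheme. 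Consistency (Condition \ref{assum:scheme}(4)) then converts this into an inequality for the continuous operator $F_j^{\eps,\delta,c}$ at $\hat{\x}_0$ with the classical derivatives of $\phi$ and the values $\overline{U}_k(\hat{\x}_0)$ substituted for $U_k$, $k\ne j$, up to errors $\omega_1(h_m)+\omega_2(\xi_m)\to 0$. Passing to the upper semicontinuous envelope $F^*_j$ in the limit yields the subsolution inequality. The supersolution argument for $\underline{U}$ is symmetric, using strict minima.

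The main subtlety, which I would treat carefully, is that scheme \eqref{eq:scheme} arises from the piecewise-constant-policy reformulation \eqref{eq:Un+1/2}--\eqref{eq:PCCT}: in each of the three branches of the $\min$ there is an extra summand $-\Delta t(L^{\a_j}_{\eps,h}U^{n+1}_{j,i}+f^{\a_j}_{\eps,h}[U^{n+1}_{j,i}])$ on the obstacle and switching branches, which is absent from the continuous equation \eqref{eq:hjbvi_s}. These terms vanish consistently as $\Delta t, h\to 0$ — this is exactly what Condition \ref{assum:scheme}(4) encodes for a test function $\phi$ — and I must verify that in the limiting min, the correct branch of the switching operator $\cM_j$ or obstacle $U_j-\zeta$ is recovered. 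A further technicality is that the interpolation onto different grids mixes $\overline{U}_k$ values across neighbours; here the nonnegativity and partition-of-unity property of the weights $\omega^n_{k,i(j),a}$ in Condition \ref{assum:scheme}(1), together with the upper semicontinuity of $\overline{U}_k$, ensures that $\limsup_m \tilde U^{n_m}_{k,i_m(j)}\le \overline{U}_k(\hat{\x}_0)$, which is precisely what is needed to pass to the limit in the switching term $\max_{k\ne j}(\tilde U^n_{k,i(j)}-c)$ using the monotonicity in Condition \ref{assum:scheme}(2). The dual bound for the supersolution works identically for $\underline{U}_k$, completing the argument.
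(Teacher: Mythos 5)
Your proposal is correct and follows essentially the same half-relaxed-limits (Barles--Souganidis) strategy that the paper uses: it defers the proof to \cite{reisinger2016}, whose argument is precisely the combination of $\ell^\infty$ stability, weak monotonicity with positive interpolation weights, consistency, and the switching-system comparison principle (Theorem~\ref{thm:comparision_s}) that you describe, including the observation that this yields existence constructively. One small slip: at the end of the subsolution argument you should pass to the \emph{lower} semicontinuous envelope $F_{j*}$ (not $F_j^*$) to obtain the subsolution inequality, with $F_j^*$ reserved for the supersolution side.
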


The proof is essentially the same as that in \cite{reisinger2016} and is omitted. We remark that in the proof, we construct the solution of the switching system directly from the numerical solutions. Since the solution of the scheme \eqref{eq:scheme} is uniformly bounded, Theorems \ref{thm:comparision_s} and \ref{thm:conv_numerical}  immediately give the existence and uniqueness of a bounded viscosity solution to the switching system \eqref{eq:hjbvi_s}.
\begin{Corollary}
Under  Assumption \ref{assum:coeff} and the existence of a scheme satisfying Condition \ref{assum:scheme}, the switching system \eqref{eq:hjbvi_s} admits a unique viscosity solution bounded uniformly in $c$.
\end{Corollary}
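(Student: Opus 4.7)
The plan is to obtain existence constructively from Theorem~\ref{thm:conv_numerical} and uniqueness from the comparison principle of Theorem~\ref{thm:comparision_s}, with the uniform bound in $c$ descending from the stability part of Condition~\ref{assum:scheme}.

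For existence, I would fix $\eps$, $\delta$, and $c$, and invoke the standing hypothesis that some scheme of the form \eqref{eq:scheme} satisfying Condition~\ref{assum:scheme} is available. Part~(3) of that condition provides an $\ell^\infty$ bound on the family of discrete solutions $\{U^{n+1}_{j,i}\}$ that is uniform in both $h$ and $c$. Theorem~\ref{thm:conv_numerical} then produces, as $h \to 0$, a limit $U^{\eps,\delta,c} = (U^{\eps,\delta,c}_1,\dots,U^{\eps,\delta,c}_J)$, uniform on bounded subsets of $\bar{\cQ}_T$, which is a viscosity solution of \eqref{eq:hjbvi_s}. The uniform-in-$c$ stability passes to the limit, so this solution inherits a bound independent of the switching cost.

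For uniqueness, I would apply Theorem~\ref{thm:comparision_s} in both directions. If $U$ and $V$ are two bounded viscosity solutions of \eqref{eq:hjbvi_s}, then the boundary condition encoded in the second line of \eqref{eq:hjbvi_s} forces $U_j(0,\cdot) = g = V_j(0,\cdot)$ for each $j=1,\dots,J$. Consequently both $U(0,\cdot) \le V(0,\cdot)$ and $V(0,\cdot) \le U(0,\cdot)$ hold componentwise on $\{0\}\t\R^d$, and two applications of the switching-system comparison principle yield $U \equiv V$ on $\bar{\cQ}_T$.

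The argument is essentially an assembly of the two preceding results, so I do not anticipate a substantive obstacle. The point worth emphasising is that the uniform bound in the switching cost $c$---which is precisely the assumption that was tacitly required when invoking Theorem~\ref{thm:conv_s}---is supplied by the discrete stability in Condition~\ref{assum:scheme}(3), rather than by any comparison argument at the continuous level; in this sense the numerical scheme itself plays the role of the constructive existence proof for the switching system.
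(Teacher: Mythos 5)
Your proposal matches the paper's (unwritten) proof: existence and the uniform-in-$c$ bound come from the $\ell^\infty$ stability of the scheme in Condition~\ref{assum:scheme}(3) passed through the Barles--Souganidis construction of Theorem~\ref{thm:conv_numerical}, and uniqueness comes from two applications of the comparison principle of Theorem~\ref{thm:comparision_s}. Your closing remark---that the boundedness uniform in $c$ is obtained from discrete stability rather than from a PDE-level estimate---is exactly the point the paper emphasizes when it says it gives a ``constructive proof'' of existence.
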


\subsection{A specific implicit scheme for the switching system}\l{sec:implicit}
In this section, we analyze the implicit scheme \eqref{eq:implicit} and demonstrate that it satisfies Condition \ref{assum:scheme}, which subsequently implies its convergence to the switching system.

The following estimates are essential for our consistency and  stability analysis.
\begin{Lemma}\l{lemma:consistency_op}
Under Assumption \ref{assum:coeff}, 
there exists $C$ independent of $h,k,\eps, \delta$ such that
for any test functions $\phi_j\in C^{1,2}(\bar{\cQ}_T)$ 
and $\varepsilon<1$
that
\begin{align*}
&|A^{\a}_{\eps,h,k}\phi^{n+1}_{j,i}+K^{\a,1}_{\eps,h}\phi^{n+1}_{j,i}
-A_\eps^{\a} \phi_j(\x_{j,i}^{n+1})-K_\eps^{\a} \phi_j(\x_{j,i}^{n+1})|\le C \left(\f{h^2}{k^2}+\f{h^2}{\eps^2}+\om(\x^{n+1}_{j,i},k)\right),\\
&|B^{\a}_{\eps,h}\phi^{n+1}_{j,i}-B_\eps^{\a}\phi(\x^{n+1}_{j,i})|\le C\f{h^2}{\eps}.
\end{align*}
for some  $\om(\x^{n+1}_{j,i},k)$ such that $\om(\cdot, k)\to 0$ as $k\to 0$ uniformly on compact neighbourhoods of $\x^{n+1}_{j,i}$.
\end{Lemma}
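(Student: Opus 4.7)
The plan is to bound the local and nonlocal discretisation errors separately and then combine them. Writing the nonlocal operator as $K^\a_\eps\phi = K^{\a,1}_\eps\phi + b^\a_\eps\cdot D\phi$ with $b^\a_\eps(x) = -\int_{|e|\ge\eps}\eta^\a(x,e)\,\nu(de)$, and recalling $\tilde b^\a = b^\a+b^\a_\eps$, the continuous target $A_\eps^\a\phi + K_\eps^\a\phi$ equals $\tilde A^\a_\eps\phi + K^{\a,1}_\eps\phi$, where $\tilde A^\a_\eps\phi := \f12\tr(\tilde\sigma^\a(\tilde\sigma^\a)^T D^2\phi) + \tilde b^\a\cdot D\phi$ is precisely the continuous counterpart that the semi-Lagrangian operator $A^{\a}_{\eps,h,k}$ was built to approximate. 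The total error decomposes as $(A^{\a}_{\eps,h,k}\phi - \tilde A^\a_\eps\phi) + (K^{\a,1}_{\eps,h}\phi - K^{\a,1}_\eps\phi)$, and I will bound each piece.

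For the nonlocal terms, set $\psi(y):=\phi(t^n,x_i+y)-\phi(t^n,x_i)$. Assumption 2.1(2) gives $|\eta^\a(x_i,e)|\le C(1\wedge|e|)$, so the interpolation argument lies in a fixed bounded set, and the standard multilinear interpolation estimate yields the pointwise bound $|\cI_h\psi(y) - \psi(y)|\le Ch^2\|D^2\phi\|_{L^\infty}$. Integrating against $\nu$ on $\{|e|\ge\eps\}$ reduces the problem to bounding the tail mass: using \eqref{eq:nu_int} and the fact that $1\wedge|e|^2\ge\eps^2$ on $\{|e|\ge\eps\}$ for $\eps<1$, I obtain $\nu(\{|e|\ge\eps\})\le C/\eps^2$, producing the $Ch^2/\eps^2$ bound for $K^{\a,1}_{\eps,h}\phi - K^{\a,1}_\eps\phi$. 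The analogous argument for $B^{\a}_{\eps,h}\phi - B^\a_\eps\phi$ uses the extra factor $|\gamma(x,e)|\le C(1\wedge|e|)$ from Assumption 2.1(3) together with the elementary inequality $(1\wedge|e|)\le(1\wedge|e|^2)/\eps$ on $\{|e|\ge\eps\}$, giving $\int_{|e|\ge\eps}\gamma(x_i,e)\,\nu(de)\le C/\eps$ and hence the $Ch^2/\eps$ bound.

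For the local error, using $\sum_m\om_m\equiv 1$ I rewrite $A^{\a}_{\eps,h,k}\phi(t^n,x_i)$ as a symmetric second-order difference of $\cI_h\phi$ along $\pm k\tilde\sigma^\a_l$, plus a forward difference in direction $k^2\tilde b^\a$. I split this into (i) the same differences applied to the exact $\phi$, and (ii) an interpolation correction $\cI_h\phi - \phi$. In (i), Taylor expansion to second order with the integral remainder, using only $\phi\in C^{1,2}$, gives $\f12\tr(\tilde\sigma^\a(\tilde\sigma^\a)^T D^2\phi)+\tilde b^\a\cdot D\phi$ plus a remainder of the form $|\tilde\sigma^\a|^2\om_\phi(k|\tilde\sigma^\a|) + |\tilde b^\a|^2\om_\phi(k^2|\tilde b^\a|)$, where $\om_\phi$ is a modulus of continuity of $D^2\phi$ on a compact neighbourhood of $\x^{n+1}_{j,i}$; for fixed $\eps$ this is exactly the required $\om(\x^{n+1}_{j,i},k)$. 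In (ii), the pointwise interpolation bound $|\cI_h\phi - \phi|\le Ch^2\|D^2\phi\|_{L^\infty}$ is divided by $k^2$ according to the semi-Lagrangian weights in \eqref{eq:d_coeff}, producing the $Ch^2/k^2$ term.

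The main obstacle is the low regularity: with only $\phi\in C^{1,2}(\bar{\cQ}_T)$, fourth-order Taylor expansion is unavailable and the semi-Lagrangian consistency error cannot be written as a clean $O(k^2)$; one must work with the modulus of continuity of $D^2\phi$. Compounding this, the modified coefficients are large: $\tilde\sigma^\a$ is bounded (since $\int_{|e|<\eps}|\eta^\a|^2\,\nu(de)\le C$ by \eqref{eq:nu_int}) but $\tilde b^\a$ can blow up like $O(1/\eps)$ through the drift correction $b^\a_\eps$. Consequently the remainder in (i), and therefore $\om(\x,k)$, inevitably depends on $\eps$, which is permitted by Condition 4.1(4). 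Keeping careful track of these constants so that the final bound has the advertised $\eps$-dependence $(C$ independent of $h,k,\eps,\delta)$ in the two explicit terms while absorbing the $\eps$-dependence into $\om$ is the principal technical point.
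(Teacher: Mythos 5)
Your proof is correct and takes essentially the same route as the paper: decompose the error into the semi-Lagrangian local term (targeting the modified diffusion plus drift $\tilde b^\alpha = b^\alpha + b^\alpha_\eps$) and the two nonlocal terms, bound the nonlocal errors by the $O(h^2)$ multilinear interpolation estimate times the tail masses $\int_{|e|\ge\eps}\nu(de)\le C/\eps^2$ and $\int_{|e|\ge\eps}\gamma\,\nu(de)\le C/\eps$, and bound the local error by Taylor's theorem with integral remainder, absorbing the resulting modulus-of-continuity terms into $\omega(\cdot,k)$. Your closing remark that $\tilde b^\alpha = O(1/\eps)$ forces $\omega$ to depend on $\eps$ — permitted since the lemma only asserts $\omega\to 0$ as $k\to 0$ with $\eps$ fixed — is an accurate reading of what the paper leaves implicit.
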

\color{black}
\begin{proof}
We first derive the estimate for $B^{\a}_{\eps,h}\phi^{n+1}_{j,i}$. It follows from $|\eta^\a|\le C$ and the definitions of $B^{\a}_{\eps,h}\phi$ and $B_\eps^{\a}\phi$ that
\begin{align*}
&|B^{\a}_{\eps,h}\phi^{n+1}_{j,i}-B_\eps^{\a}\phi(\x^{n+1}_{j,i})|\\
\le &
\int_{|e|\ge \eps}|\cI_h[\phi(t^{n+1},x_{j,i}+\cdot)](\eta^\a(x_{j,i},e))-\phi(t^{n+1},x_{j,i}+\eta^\a(x_{j,i},e))|
\gamma(x_{j,i},e)\,\nu(de)\\
\le &Ch^2|D^2\phi|_{B(\x^{n+1}_{j,i},C)}\int_{|e|\ge \eps}(1\wedge |e|)\,\nu(de)\le  C\f{h^2}{\eps},
\end{align*}
where we have used the fact that $|\cI_h[\phi]-\phi|_{B(\x^{n+1}_{j,i},C)}\le C|D^2\phi|_{B(\x^{n+1}_{j,i},C)}h^2$.
Similar arguments give us that $|K^{\a,1}_{\eps,h}\phi^{n+1}_{j,i}-K^{\a,1}_{\eps}\phi(\x^{n+1}_{j,i})|\le Ch^2|D^2\phi|_{B(\x^{n+1}_{j,i},C)}\int_{|e|\ge \eps}\,\nu(de)\le C\f{h^2}{\eps^2}$.

We then infer from Taylor's theorem with an integral remainder  that the truncation errors of the local terms can be bounded by
$$|A^{\a}_{\eps,h,k}\phi^{n+1}_{j,i}-A^{\a}_{\eps}\phi(\x^{n+1}_{j,i})-b^\a_{\eps}(x_{j,i})\cdot D\phi(\x^{n+1}_{j,i})|\le C|D^2\phi|_{B(\x^{n+1}_{j,i},C)}\f{h^2}{k^2}+\om(\x^{n+1}_{j,i},k)$$
for some function $\om(\x^{n+1}_{j,i},k)$ such that $\om(\cdot, k)\to 0$ as $k\to 0$ uniformly on compact neighbourhoods of $\x^{n+1}_{j,i}$,
\color{black}
which 
 enables us to deduce that
\begin{align*}
&|A^{\a}_{\eps,h,k}\phi^{n+1}_{j,i}+K^{\a,1}_{\eps,h}\phi^{n+1}_{j,i}
-A^{\a}_\eps \phi_j(\x_{j,i}^{n+1})-K^{\a} _\eps\phi_j(\x_{j,i}^{n+1})|\\
\le &|A^{\a}_{\eps,h,k}\phi^{n+1}_{j,i}-A^{\a}_{\eps}\phi(\x^{n+1}_{j,i})-b^\a_{\eps}(x_{j,i})\cdot D\phi(\x^{n+1}_{j,i})|+|K^{\a,1}_{\eps,h}\phi^{n+1}_{j,i}-K^{\a,1}_{\eps}\phi(\x^{n+1}_{j,i})|\\
\le &C\left(\f{h^2}{k^2}+\f{h^2}{\eps^2}+\om(\x^{n+1}_{j,i},k)\right).
\end{align*}
\end{proof}

\begin{Lemma}\l{lemma:sum_coeff}
Under Assumption \ref{assum:coeff}
there exists $C$ independent of $h,k,\eps, \delta$ such that
for all $\varepsilon<1$
$$
\sum_{m\not =0}\kappa^{\a,n}_{h,m,i}\le \f{C}{h\eps}\wedge \f{1}{\eps^2},\q
\sum_{m\not =0}\b^{\a,n}_{h,m,i}\le \f{C}{h}\wedge \f{1}{\eps}, \q
\sum_{m\in \Z^d}d^{\a,n}_{h,k,m,i} \le  \f{C}{k^2}, 
$$
where $\kappa^{\a,n}_{h,m,i}$, $\b^{\a,n}_{h,m,i}$, and $d^{\a,n}_{h,k,m,i}$ are defined in \eqref{eq:k&b_coeff} and \eqref{eq:d_coeff}, respectively.
\end{Lemma}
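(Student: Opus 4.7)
\medskip

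\noindent\textbf{Proof plan for Lemma \ref{lemma:sum_coeff}.}
The plan is to exploit only two structural facts about the tent functions $\om_m$: first, that they form a non-negative partition of unity, i.e.\ $\sum_m \om_m(\cdot;h) = 1$ with $\om_m \ge 0$; and second, the Lipschitz estimate $|D\om_m|\le C/h$ together with $\om_0(0;h)=1$, which yields the crucial pointwise bound
\bb\l{eq:om0}
\sum_{m\neq 0}\om_m(y;h) \;=\; 1-\om_0(y;h) \;\le\; \min\!\bigl(1,\;C|y|/h\bigr),\qquad y\in \R^d.
\ee
Everything else follows from the structural bounds on $\eta^\a$, $\gamma$, and $\nu$ provided by Assumption~\ref{assum:coeff} together with the integrability property \eqref{eq:nu_int}.

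For the coefficient $\kappa^{\a,n}_{h,m,i}$, I would integrate \eqref{eq:om0} against $\nu$ restricted to $\{|e|\ge\eps\}$. Choosing the trivial bound $1$ in \eqref{eq:om0} gives
\[
\sum_{m\neq 0}\kappa^{\a,n}_{h,m,i} \le \int_{|e|\ge\eps}\,\nu(de) \le \f{1}{\eps^2}\int_{|e|\ge\eps}(1\wedge|e|^2)\,\nu(de) \le \f{C}{\eps^2},
\]
using \eqref{eq:nu_int} and $1\le |e|^2/\eps^2$ on $\{\eps\le|e|<1\}$. Choosing instead the Lipschitz bound $C|y|/h$ with $y=\eta^\a(x_i,e)$ and using $|\eta^\a(x_i,e)|\le C(1\wedge|e|)$ from Assumption~\ref{assum:coeff}(2) gives
\[
\sum_{m\neq 0}\kappa^{\a,n}_{h,m,i} \le \f{C}{h}\int_{|e|\ge\eps}(1\wedge|e|)\,\nu(de) \le \f{C}{h\eps}\int_{|e|\ge\eps}(1\wedge|e|^2)\,\nu(de) \le \f{C}{h\eps},
\]
where I used the elementary inequality $1\wedge|e|\le (1\wedge|e|^2)/\eps$ on $\{|e|\ge\eps\}$. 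Taking the minimum of the two estimates yields the first claim.

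The estimate for $\b^{\a,n}_{h,m,i}$ is entirely analogous but now carries the extra factor $\gamma(x_i,e)$. Using $|\gamma(x_i,e)|\le C(1\wedge|e|)$ from Assumption~\ref{assum:coeff}(3), one obtains from the trivial bound in \eqref{eq:om0} that
\[
\sum_{m\neq 0}\b^{\a,n}_{h,m,i}\le C\int_{|e|\ge\eps}(1\wedge|e|)\,\nu(de)\le \f{C}{\eps},
\]
and from the Lipschitz bound combined with $|\eta^\a|\le C(1\wedge|e|)$ that
\[
\sum_{m\neq 0}\b^{\a,n}_{h,m,i}\le \f{C}{h}\int_{|e|\ge\eps}(1\wedge|e|^2)\,\nu(de)\le \f{C}{h}.
\]
Finally, for $d^{\a,n}_{h,k,m,i}$ I would simply sum the defining formula \eqref{eq:d_coeff} termwise over $m\in\Z^d$ and use the partition-of-unity identity $\sum_m \om_m(z;h)=1$ independently at each of the $2d+1$ points $x_i\pm k\tilde{\sigma}_l^\a(x_i)$ and $x_i+k^2\tilde{b}^\a(x_i)$, giving $\sum_m d^{\a,n}_{h,k,m,i}=(d+1)/k^2\le C/k^2$. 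No real obstacle is anticipated; the only subtle point is the careful use of \eqref{eq:om0} to interpolate between the two regimes, which I would isolate as a short preliminary step in the proof.
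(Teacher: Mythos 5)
Your proof is correct and follows the same core idea as the paper's, but with a small and in fact slightly cleaner reorganization. The paper bounds $\sum_{m\neq 0}\om_m(y;h)$ by writing each term as $\om_m(y;h)-\om_m(0;h)$, inserting the support indicator $1_{\{y\in\operatorname{supp}\om_m\}}$, and then applying $|D\om_m|_0\le C/h$ term by term; this implicitly relies on the bounded-overlap property of the supports of the tent functions (each point lies in the support of at most $2^d$ of them) to conclude $\sum_{m\ne 0}\om_m(y;h)\le C|y|/h$. You instead observe $\sum_{m\neq 0}\om_m(y;h)=1-\om_0(y;h)$ and apply the Lipschitz bound once, to $\om_0$ alone, with $\om_0(0;h)=1$, which sidesteps the overlap count entirely. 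The second branch of your min (the $\frac{C}{\eps^2}$ and $\frac{C}{\eps}$ bounds via $\sum_m\om_m\equiv 1$) and the $d$-coefficient bound $\sum_m d^{\a,n}_{h,k,m,i}=(d+1)/k^2$ match the paper exactly. One cosmetic note: as you implicitly observe, the constant-free factors $\frac{1}{\eps^2}$ and $\frac{1}{\eps}$ in the statement should really carry a constant (the $\nu$-integral $\int(1\wedge|e|^2)\,\nu(de)$); the paper's own proof produces exactly that $C$, so this is a harmless imprecision in the statement, not a flaw in either argument.
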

\begin{proof}
We shall only prove the estimate for $\kappa^{\a,n}_{h,m,i}$, since the estimate for $\b^{\a,n}_{h,m,i}$ follows from a similar argument, and the estimate for $d^{\a,n}_{h,k,m,i}$ follows directly from the fact that $\sum_m \om_m=1$.

The definition of $\kappa^{\a,n}_{h,m,i}$ and the integrability property \eqref{eq:nu_int} of $\nu$  imply that
\begin{align*}
\sum_{m\not=0}\kappa^{\a,n}_{h,m,i}&=\sum_{m\not=0}\int_{|e|>\eps} \om_m(\eta^\a(x_i,e);h)\,\nu(de)=\sum_{m\not=0}\int_{|e|>\eps} \om_m(\eta^\a(x_i,e);h)1_{\{\eta^\a(x_i,e)\in \textnormal{supp}\,\om_m\}}\,\nu(de)\\
&=\sum_{m\not=0}\int_{|e|>\eps} \big(\om_m(\eta^\a(x_i,e);h)-\om_m(0;h)\big)1_{\{\eta^\a(x_i,e)\in \textnormal{supp}\,\om_m\}}\,\nu(de)\\
&\le \int_{|e|>\eps}\sum_{m\not=0} |D\om_m|_0|\eta^\a(x_i,e)|1_{\{\eta^\a(x_i,e)\in \textnormal{supp}\,\om_m\}}\,\nu(de)\le \f{C}{h}\int_{|e|>\eps}(1\wedge |e|)\,\nu(de)\\
&\le \f{C}{h}\int_{|e|>\eps}\f{1\wedge |e|}{\eps}(1\wedge |e|)\,\nu(de)=\f{C}{h\eps}\int_{|e|>\eps}(1\wedge |e|^2)\,\nu(de)\le \f{C}{h\eps}.
\end{align*}
Alternatively, it follows directly from the identity $\sum_{m\in \Z^d}\om_m(\cdot;h)\equiv 1$ that
$$
\sum_{m\not=0}\kappa^{\a,n}_{h,m,i}=\sum_{m\not=0}\int_{|e|>\eps} \om_m(\eta^\a(x_i,e);h)\,\nu(de)\le \int_{|e|>\eps}\,\nu(de)\le \f{1}{\eps^2}\int_{|e|>\eps}(1\wedge |e|^2)\,\nu(de),
$$
which leads us to the desired estimates.
\end{proof}

\begin{Remark}\l{rmk:bdd for coeff}
Since we have not used any information on the exact behavior of the nonsingular measure $\nu$ around zero,
the estimates for the nonlocal terms in Lemma \ref{lemma:consistency_op} and \ref{lemma:sum_coeff} are not optimal for many specific cases.
 If one can estimate  upper bounds of the density of the L\'{e}vy measure, or equivalently estimate the (pseudo-differential) orders of the nonlocal operators $K^\a$ and $B^\a$,  more precise results for the truncation error of the singular measure can be deduced (\cite{biswas2017}).
\end{Remark}
The next lemma presents some important properties of the Lax-Friedrichs numerical flux for Lipschitz continuous Hamiltonian, which are crucial for our subsequent analysis.  We refer readers to \cite{crandall1984} for a proof of these statements. Then the following hold:
\begin{Lemma}\l{lemma:lax}
Let 
$\tilde{f}$ as in \eqref{eq:numerical_flux} and $(\x^{n}_{j,i}, u, k)\in \Omega_{j,h}\t \R\t \R$, and suppose  Assumption \ref{assum:coeff} and the condition $\theta>C\lambda$ hold, where $C$ is the Lipschitz constant of the Hamiltonian $\bar{f}$.
\begin{enumerate}[(1)]
\item (Consistency.) 
For any test functions $\phi\in C^{1,2}([0,T]\t \R^d)$, we have 
$$
|\tilde{f}^{\a}(\x^{n}_{j,i},u,  \Delta \phi^{n}_{j,i},k)-\bar{f}^{\a}(\x_{{j,i}}^{{n}},u, D\phi(\x^{n}_{j,i}),k)|\le Ch^2/\Delta t.
$$
\item {(Monotonicity.)} 
If $V^n_{j,i}\ge U^n_{j,i}$, for all $i,j,n$, then we have
$$
\Delta t\tilde{f}^{\a}(\x^{n}_{j,i},u,  \Delta V^{n}_{j,i},k)+2d\theta V_{j,i}^n\ge \Delta t\tilde{f}^{\a}(\x^{n}_{j,i},u,  \Delta U^{n}_{j,i},k)+2d\theta U_{j,i}^n.
$$
\item { (Stability.)} 
For any bounded functions $U$ and $V$, we have
$$|(\Delta t\tilde{f}^{\a}(\x^{n}_{j,i},u,  \Delta V^{n}_{j,i},k)+2d\theta V_{j,i}^n)
-( \Delta t\tilde{f}^{\a}(\x^{n}_{j,i},u,  \Delta U^{n}_{j,i},k)+2d\theta U_{j,i}^n)|
\le 2d\theta|U-V|_0.$$
\end{enumerate}

\end{Lemma}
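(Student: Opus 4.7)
The plan is to verify each of the three properties in turn, all by direct computation with the explicit formula for the Lax--Friedrichs flux in \eqref{eq:numerical_flux}. The key reduction that will simplify everything is to rewrite
\[
\Phi(V):=\Delta t\,\tilde{f}^{\a}(\x^{n}_{j,i},u,\Delta V^{n}_{j,i},k)+2d\theta V_{j,i}^n
= \Delta t\,\bar{f}^{\a}\!\Big(\x^{n}_{j,i},u,\tfrac{\Delta V^{n}_{j,i}}{2h},k\Big)+\theta\sum_{l=1}^d\big(V_{j,i+e_l}^n+V_{j,i-e_l}^n\big),
\]
using $\Delta t\,\theta/\lambda=\theta h$ and the identity $\Delta_+^{(l)}V-\Delta_-^{(l)}V=V_{j,i+e_l}-2V_{j,i}+V_{j,i-e_l}$, so that the $2d\theta V_{j,i}$ terms exactly cancel. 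Thus $\Phi$ is independent of the central value $V_{j,i}^n$ and depends on $V$ only through the $2d$ neighbours $V_{j,i\pm e_l}^n$. This structural observation is what drives both monotonicity and stability.

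\textbf{(1) Consistency.} First I would show that the central difference approximates $D\phi(\x^n_{j,i})$ to order $O(h^2)$, so by the Lipschitz property of $\bar{f}^{\a}$ in $p$,
\[
\Big|\bar{f}^{\a}\!\Big(\x^{n}_{j,i},u,\tfrac{\Delta \phi^{n}_{j,i}}{2h},k\Big)-\bar{f}^{\a}\!\big(\x^{n}_{j,i},u,D\phi(\x^n_{j,i}),k\big)\Big|\le C h^2.
\]
Second, for the artificial viscosity term, a Taylor expansion gives $\Delta_+^{(l)}\phi-\Delta_-^{(l)}\phi=h^2\,\partial_{x_l}^2\phi(\x^n_{j,i})+O(h^3)$, so
\[
\sum_{l=1}^d\tfrac{\theta}{\lambda}\tfrac{\Delta_+^{(l)}\phi-\Delta_-^{(l)}\phi}{h}
=\tfrac{\theta h}{\lambda}\sum_{l=1}^d\partial_{x_l}^2\phi(\x^n_{j,i})+O\!\Big(\tfrac{h^2}{\Delta t}\Big)=O\!\Big(\tfrac{h^2}{\Delta t}\Big),
\]
since $\theta h/\lambda=\theta h^2/\Delta t$. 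Combining the two bounds gives the stated $O(h^2/\Delta t)$ consistency error.

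\textbf{(2) Monotonicity.} I would compute the partial derivatives of $\Phi(V)$ with respect to the $2d$ neighbours. Using that $\Delta_+^{(l)}V+\Delta_-^{(l)}V=V_{j,i+e_l}-V_{j,i-e_l}$,
\[
\partial_{V_{j,i+e_l}}\Phi=\theta+\tfrac{\Delta t}{2h}\,\partial_{p_l}\bar{f}^{\a},\qquad
\partial_{V_{j,i-e_l}}\Phi=\theta-\tfrac{\Delta t}{2h}\,\partial_{p_l}\bar{f}^{\a}.
\]
By the Lipschitz continuity of $\bar{f}^{\a}$ in $p$, $|\partial_{p_l}\bar{f}^{\a}|\le C$, so both partials are bounded below by $\theta-C\lambda/2>0$ under the CFL-type condition $\theta>C\lambda$. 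Since $\Phi$ has no dependence on $V_{j,i}$ or other nodes, monotonicity of $\Phi(V)-\Phi(U)$ under $V\ge U$ follows componentwise.

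\textbf{(3) Stability.} The crucial observation is that
\[
\sum_{l=1}^d\big(\partial_{V_{j,i+e_l}}\Phi+\partial_{V_{j,i-e_l}}\Phi\big)=2d\theta
\]
\emph{exactly}, since the $\partial_{p_l}\bar{f}^{\a}$ contributions cancel pairwise. Writing $\Phi(V)-\Phi(U)=\int_0^1\sum_{\pm,l}\partial_{V_{j,i\pm e_l}}\Phi(U+s(V-U))\cdot (V-U)_{j,i\pm e_l}\,ds$, and using that each partial is non-negative (from step 2), one bounds
\[
|\Phi(V)-\Phi(U)|\le \int_0^1 \Big(\sum_{\pm,l}\partial_{V_{j,i\pm e_l}}\Phi\Big)|V-U|_0\,ds=2d\theta\,|V-U|_0,
\]
which is the stated stability estimate.

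The only delicate point is the bookkeeping in (3): the sum of partials is a constant $2d\theta$ only because of the exact antisymmetry between the forward and backward neighbours in the central difference, and one must distinguish this from the naive triangle-inequality bound $2d(\theta+C\lambda/2)$. Everything else is routine Taylor expansion and Lipschitz estimates; I do not anticipate structural difficulties beyond tracking the constants.
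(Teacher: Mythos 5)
The paper itself offers no proof of this lemma, only a pointer to Crandall--Lions~\cite{crandall1984}, so what you have written is a reconstruction rather than a comparison. Your central structural observation is correct and is indeed the crux: adding $2d\theta V_{j,i}^n$ to $\Delta t\,\tilde f^\alpha$ produces a quantity $\Phi$ that is independent of the centre value and depends on $V$ only through the $2d$ neighbours, because $\Delta t\,\theta/(\lambda h)=\theta$ so that $-2\theta V_{j,i}^n+2d\theta V_{j,i}^n$... more precisely $\theta\sum_l(V_{j,i+e_l}-2V_{j,i}+V_{j,i-e_l})+2d\theta V_{j,i}=\theta\sum_l(V_{j,i+e_l}+V_{j,i-e_l})$. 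The consistency calculation is routine (the $h^2/\Delta t$ bound on the viscosity term dominates the central-difference error under the scaling used in the paper, and the paper is equally cavalier about whether $\phi$ is $C^2$ or $C^3$).

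There is, however, a real gap in (2) and (3): you compute $\partial_{p_l}\bar f^\alpha$ and in (3) write an integral of a gradient along a path, but Assumption~\ref{assum:coeff} only gives Lipschitz continuity of $\bar f^\alpha$ in $p$, not differentiability, so these objects need not exist pointwise. The argument is easy to repair but must be repaired. For (2), replace the derivative computation by a telescoping argument: change one neighbour $U_{j,i\pm e_l}\mapsto V_{j,i\pm e_l}$ at a time; each increment of $\Phi$ is bounded below by $(\theta-C\lambda/2)(V_{j,i\pm e_l}-U_{j,i\pm e_l})\ge 0$ using only the Lipschitz bound on $\bar f^\alpha$, which gives monotonicity without ever forming a derivative. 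For (3), the cleanest route avoids the path integral entirely: observe that shifting all $2d$ neighbours by the same constant $c$ leaves the $\bar f^\alpha$ argument $(V_{j,i+e_l}-V_{j,i-e_l})/(2h)$ unchanged (this is the exact antisymmetry you noticed) and shifts $\Phi$ by precisely $2d\theta c$. Then, with $c=|U-V|_0$, monotonicity gives $\Phi(V)\le\Phi(U+c\mathbf{1})=\Phi(U)+2d\theta c$ and symmetrically $\Phi(U)\le\Phi(V)+2d\theta c$, yielding the stability estimate. This makes (3) a one-line corollary of (2) plus shift invariance, rather than a separate calculation. Finally, note your proof of monotonicity only needs $\theta>C\lambda/2$, so the lemma's stated CFL condition $\theta>C\lambda$ is slightly stronger than necessary; this is not an error, merely slack, but worth recording since it affects the constant in the implementation.
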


\begin{Proposition} Suppose Assumption \ref{assum:coeff}, the positive interpolation property in Condition \ref{assum:scheme} and the  condition $\theta>C\lambda$ hold. Then we have the following:
\begin{enumerate}[(1)]
\item There exists a unique bounded solution $U^{n}$ of the scheme \eqref{eq:implicit}.
\item The scheme is  $\ell^\infty$ stable and weakly monotone.
It is consistent with the switching system \eqref{eq:hjbvi_s} provided $h^2/\Delta t\rightarrow 0$ and $h/k\rightarrow 0$ as
$h,k,\Delta t \rightarrow 0$ ($\eps$ is fixed here). 
\end{enumerate}
\end{Proposition}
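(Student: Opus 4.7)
The plan is to break the proposition into three tasks---well-posedness of the implicit equation at each time step (part (1)), weak monotonicity and $\ell^\infty$ stability (first half of (2)), and consistency with the switching system \eqref{eq:hjbvi_s} (second half of (2))---and to use Lemmas \ref{lemma:consistency_op}, \ref{lemma:sum_coeff} and \ref{lemma:lax} as the three structural inputs. For well-posedness, my approach is to rigorously carry out the Picard-type argument sketched after \eqref{eq:contract_implement}: at each time step, the implicit equation is the fixed-point equation $U = TU$ for the map $T$ on $\ell^\infty(\Z^d)$ given there. Since $A^{\a_j}_{\eps,h,k}$ has the Markov-generator form $A\phi_i = \sum_m d^{\a}_{h,k,m,i}(\phi_m - \phi_i)$ with $d^{\a} \ge 0$, the operator $I - \Delta t A^{\a_j}_{\eps,h,k}$ is invertible on $\ell^\infty$ with inverse norm at most $1$. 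Combined with Lipschitz continuity of $f$, the bounds on $\sum_m \kappa^{\a}$ and $\sum_m \b^{\a}$ from Lemma \ref{lemma:sum_coeff}, and the $\ell^\infty$ stability of Lax-Friedrichs from Lemma \ref{lemma:lax}(3), this yields precisely the Lipschitz constant for $T$ displayed after \eqref{eq:contract_implement}; under mesh conditions sufficient to make this constant strictly less than $1$ (e.g.\ $\Delta t/\eps^2 < 1$ and $4d\theta < 1$, compatible with $\theta > C\lambda$ for small enough $\lambda$), Banach's fixed-point theorem delivers a unique bounded $U^n_j$.

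For weak monotonicity I would verify each of the three arguments inside the outer $\min$ of \eqref{eq:scheme} in turn. Both $A^{\a_j}_{\eps,h,k}$ and $K^{\a_j,1}_{\eps,h}$ have nonnegative off-diagonal coefficients by \eqref{eq:d_coeff} and \eqref{eq:k&b_coeff}, so $U \mapsto -\Delta t\, L^{\a_j}_{\eps,h}U$ is monotone in off-centre entries; $B^{\a_j}_{\eps,h}$ has nonnegative weights and $f$ is nondecreasing in its last argument by Assumption \ref{assum:coeff}(4b); the Lax-Friedrichs contribution is monotone under $\theta > C\lambda$ by Lemma \ref{lemma:lax}(2); and the switching term $\max_{k \neq j}(\tilde U^n_{k,i(j)} - c)$ is nondecreasing in $\tilde U^n_k$, hence in the nodal values $U^n_k$ by the positive-interpolation property of Condition \ref{assum:scheme}(1). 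A $\min$ of such monotone expressions remains monotone, which is Condition \ref{assum:scheme}(2). For $\ell^\infty$ stability I would compare $U^n$ to spatially constant supersolutions $V^n = M_0 + C_\ast n \Delta t$, with $M_0$ bounding initial data and obstacle and $C_\ast$ absorbing $|f(\a,\cdot,\cdot,0,0,0)| \le C$; because $L^{\a}_{\eps,h}$ and $\tilde f$ annihilate constants, monotonicity converts this comparison into a discrete Gronwall inequality $M_{n+1} \le M_n + C\Delta t(1 + M_{n+1})$, yielding $\sup_n |U^n|_\infty \le e^{CT}(|g|_\infty + |\zeta|_\infty + CT)$ uniformly in $h$ and $c$.

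For consistency I would substitute smooth test functions $\phi_j$ and continuous $\varphi_k$ into each of the three arguments inside the $\min$ of \eqref{eq:scheme} and bound the residual against the corresponding argument of $F^{\eps,\delta,c}_j$. The parabolic argument combines a Taylor expansion in time (error $O(\Delta t)$), Lemma \ref{lemma:consistency_op} for the local and nonlocal integral operators (error $O(h^2/k^2 + h^2/\eps^2 + \omega(\cdot,k))$), Lemma \ref{lemma:lax}(1) for Lax-Friedrichs (error $O(h^2/\Delta t)$), and Lipschitz continuity of $f$ to absorb the $\xi$-perturbation. For the obstacle and switching arguments the additional terms $\Delta t(L^{\a_j}_{\eps,h}\phi + f^{\a_j}_{\eps,h}[\phi])$ and $\phi_j(t^{n+1}) - \phi_j(t^n)$ are $O(\Delta t)$ on smooth $\phi$ and hence vanish, while positive interpolation ensures $\tilde\varphi_k(\x^n_{j,i}) \to \varphi_k(\x^{n+1}_{j,i})$. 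Packaging everything produces moduli $(\omega_1(h), \omega_2(\xi))$ satisfying Condition \ref{assum:scheme}(4) under $h^2/\Delta t \to 0$ and $h/k \to 0$. The main technical obstacle lies precisely here: keeping the two extra $\Delta t$-scaled perturbations in the obstacle and switching branches of \eqref{eq:scheme}---absent from the continuous switching system \eqref{eq:hjbvi_s}---controlled at the right rate, and simultaneously assembling the four heterogeneous error contributions (time truncation, Lax-Friedrichs artificial viscosity, nonlocal quadrature, interpolation replacement of $\varphi_k$ by $\tilde\varphi_k$) into a single pair of moduli behaving uniformly on compacta with $\eps$ fixed.
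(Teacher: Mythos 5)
Your three-way decomposition (well-posedness, monotonicity/stability, consistency) matches the paper's structure, and the monotonicity and consistency arguments are essentially the paper's. There is, however, a genuine gap in part (1). You propose proving existence and uniqueness by showing the map $T$ of \eqref{eq:contract_implement} is a contraction, which needs conditions of the form $\Delta t/\eps^2<1$ and $4d\theta<1$. Neither of these is a hypothesis of the Proposition, which asserts unconditional well-posedness for \emph{any} fixed $\Delta t$, $h$, $k$, $\eps$ under $\theta>C\lambda$ alone. The paper circumvents this by introducing a different operator,
\[
\cP U_{j,i}^n=U_{j,i}^n-\rho\cdot \bigl(\textnormal{left-hand side of \eqref{eq:implicit}}\bigr),
\]
whose fixed points coincide with solutions of \eqref{eq:implicit} and which is a contraction on $\ell^\infty(\Z^d)$ once the free tuning parameter $\rho>0$ is taken small enough relative to $\Delta t\sum_m d^{\a,n}_{h,k,m,i}$, $\Delta t\sum_m\kappa^{\a,n}_{h,m,i}$, $\Delta t\sum_m\b^{\a,n}_{h,m,i}$, $2d\theta$ and the Lipschitz constant of $f$ --- see condition \eqref{eq:stability_condition}. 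Since $\rho$ is a proof artefact, this delivers existence unconditionally; the paper's Remark after the proof makes exactly this point and explains that $T$ is kept only for \emph{efficient implementation}, not for the existence proof. You would need to replace your $T$-based argument by a $\cP$-type (or equivalent) argument to obtain the claimed generality. A second, minor imprecision: your stability argument says $\tilde f$ ``annihilates constants''; only the artificial-viscosity part does, since $\bar f^{\a}(\x,u,0,0)$ is nonzero and only bounded/Lipschitz in $u$. Your subsequent $C\Delta t(1+M_{n+1})$ absorption handles this correctly, so the barrier argument does go through, but note the paper instead derives the direct one-step estimate $(1+\Delta t C)|U^n_j|_0 \le |U^{n-\frac12}_j|_0 + \Delta t C_1$ (splitting cases on the signs of $U^n_{j,i}$ and $B^{\a}_{\eps,h}U^n_{j,i}$ to invoke the monotonicity of $f$ in $y$ and $k$) and then closes with an explicit induction on $a_n$; the two routes are interchangeable here.
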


\begin{proof}
We start to establish the existence and uniqueness of a bounded solution of \eqref{eq:implicit} in (1) by an induction argument. It is clear the statement holds for $t^0=0$ since $U^0=g$ is bounded. Now we assume that $\{U_j^{n-1}\}_{j=1}^J$ are bounded functions on $h\Z^d$ and consider the time point $t^n$. The positive interpolation property implies the interpolation step among different grids does not increase the $\ell^\infty$ norm of the solution, and hence $U_j^{n-\f{1}{2}}$ is bounded for each $j=1,\ldots, J$.

For each $\rho>0$ and $j=1,\ldots, J$, we define the operator $\cP:U_j^n\to U_j^n$ by
$$
\cP U_{j,i}^n=U_{j,i}^n-\rho\cdot \textnormal{(left-hand side of \eqref{eq:implicit})}, \q  i\in \Z^d,
$$
with a given function $U^{n-\f{1}{2}}_j$.
By virtue of the fact that fixed points to the equation $\cP U_{j}^n=U_j^n$ are precisely the solutions to \eqref{eq:implicit}, it suffices to establish that for small enough $\rho$, the operator $\cP$ is a contraction on $\ell^\infty(\Z^d)$, i.e., the Banach space of bounded functions on  $h\Z^d$ employed with the sup-norm, which along with the contraction mapping theorem leads to the desired results.
{(Similar contraction operators have been introduced in \cite{biswas2010diff,debrabant2012} to demonstrate the well-posedness of their numerical schemes.)}

For any bounded functions $U_j^n$ and $V_j^n$, the definitions of $\cP$, $A^{\a}_{\eps,h,k}$ and $K^{\a,1}_{\eps,h}$ give that
\begin{align}
&\cP U_{j,i}^n-\cP V_{j,i}^n\nb\\
\le&(1-\rho)(U_{j,i}^n-V_{j,i}^n)+\rho\Delta t \bigg[\sum_{m\in \Z^d}d^{\a,n}_{h,k,m,i}[(U_{j,m}^n-V_{j,m}^n)-(U_{j,i}^n-V_{j,i}^n)]\nb\\
&+ \sum_{m\not=0}\kappa^{\a,n}_{h,m,i}[(U_{j,i+m}^n-V_{j,i+m}^n)-(U_{j,i}^n-V_{j,i}^n)]\nb\\
&+\tilde{f}^{\a}(\x^{n}_{j,i},U^{n}_{j,i},\Delta U^{n}_{j,i}, B^{\a}_{\eps,h}U^{n}_{j,i})
-\tilde{f}^{\a}(\x^{n}_{j,i},V^{n}_{j,i},\Delta V^{n}_{j,i},B^{\a}_{\eps,h}V^{n}_{j,i}) \bigg]\nb\\
\le &(1-\rho-\rho\Delta t \sum_{m\in \Z^d}d^{\a,n}_{h,k,m,i}-\rho\Delta t \sum_{m\not=0}\kappa^{\a,n}_{h,m,i})(U_{j,i}^n-V_{j,i}^n)+\rho\Delta t( \sum_{m\in \Z^d}d^{\a,n}_{h,k,m,i}+\sum_{m\not=0}\kappa^{\a,n}_{h,m,i})|U^{n}_{j}-V^{n}_{j}|_0\nb\\
&+\rho\Delta t \big(
\tilde{f}^{\a}(\x^{n}_{j,i},U^{n}_{j,i},\Delta U^{n}_{j,i}, B^{\a}_{\eps,h}U^{n}_{j,i})
-\tilde{f}^{\a}(\x^{n}_{j,i},V^{n}_{j,i},\Delta U^{n}_{j,i}, B^{\a}_{\eps,h}U^{n}_{j,i})
 \big)
\l{eq:ctrc_line1}\\
&+\rho\Delta t \big(
\tilde{f}^{\a}(\x^{n}_{j,i},V^{n}_{j,i},\Delta U^{n}_{j,i}, B^{\a}_{\eps,h}U^{n}_{j,i})
-\tilde{f}^{\a}(\x^{n}_{j,i},V^{n}_{j,i},\Delta U^{n}_{j,i}, B^{\a}_{\eps,h}V^{n}_{j,i})
 \big) \l{eq:ctrc_line2}\\
&+\rho\Delta t \big(
\tilde{f}^{\a}(\x^{n}_{j,i},V^{n}_{j,i},\Delta U^{n}_{j,i}, B^{\a}_{\eps,h}V^{n}_{j,i}) 
-\tilde{f}^{\a}(\x^{n}_{j,i},V^{n}_{j,i},\Delta V^{n}_{j,i},B^{\a}_{\eps,h}V^{n}_{j,i})\big).\l{eq:ctrc_line3}
%
\end{align}

It remains to estimate \eqref{eq:ctrc_line1}, \eqref{eq:ctrc_line2} and \eqref{eq:ctrc_line3}.
Lemma \ref{lemma:lax} (3) enables us to bound \eqref{eq:ctrc_line3} by
$
-\rho 2d\theta(U^{n}_{j,i}-V^{n}_{j,i})+\rho 2d\theta|U^{n}_{j}-V^{n}_{j}|_0.
$
We then derive upper bounds for \eqref{eq:ctrc_line1} and \eqref{eq:ctrc_line2} depending on whether $U^n_{j,i}-V^n_{j,i}$ or $B^{\a}_{\eps,h}U^{n}_{j,i}-B^{\a}_{\eps,h}V^{n}_{j,i}$ is positive. If $U^n_{j,i}-V^n_{j,i}>0$,  the monotonicity of $f$ in $y$ implies that $\eqref{eq:ctrc_line1}$ is bounded above by $-\rho\Delta tC(U^n_{j,i}-V^n_{j,i})$, while if $U^n_{j,i}-V^n_{j,i}<0$, the Lipschitz continuity  of $f$ in $y$ enables us to bound  $\eqref{eq:ctrc_line1}$ by $\rho\Delta tC|U^n_{j,i}-V^n_{j,i}|=-\rho\Delta tC(U^n_{j,i}-V^n_{j,i})$.

We then discuss the sign of $B^{\a}_{\eps,h}U^{n}_{j,i}-B^{\a}_{\eps,h}V^{n}_{j,i}$. Suppose  $B^{\a}_{\eps,h}U^{n}_{j,i}-B^{\a}_{\eps,h}V^{n}_{j,i}<0$, then we obtain from the monotonicity of $f$ in $k$  that $\eqref{eq:ctrc_line2}\le 0$. Consequently we obtain that
\begin{align}
\cP U_{j,i}^n-\cP V_{j,i}^n\nb\le &(1-\rho-\rho\Delta t \sum_{m\in \Z^d}d^{\a,n}_{h,k,m,i}-\rho\Delta t \sum_{m\not=0}\kappa^{\a,n}_{h,m,i}-\rho\Delta t C-\rho 2d\theta)(U_{j,i}^n-V_{j,i}^n)\\
&+\rho(\Delta t \sum_{m\in \Z^d}d^{\a,n}_{h,k,m,i}+\Delta t\sum_{m\not=0}\kappa^{\a,n}_{h,m,i}+ 2d\theta)|U^{n}_{j}-V^{n}_{j}|_0\nb\\
\le &(1-\rho-\rho\Delta t C)|U_{j}^n-V_{j}^n|_0, \l{eq:P}
\end{align}
provided that $1-\rho(1+2d\theta)-\rho\Delta t\big( \sum_{m\in \Z^d}d^{\a,n}_{h,k,m,i}+\sum_{m\not=0}\kappa^{\a,n}_{h,m,i}+ C)>0$, which is satisfied for small enough $\rho$.

On the other hand, if $B^{\a}_{\eps,h}U^{n}_{j,i}-B^{\a}_{\eps,h}V^{n}_{j,i}>0$, the Lipschitz continuity of $f$ in $k$ enables us to bound  $\eqref{eq:ctrc_line2}$ by $C (B^{\a}_{\eps,h}U^{n}_{j,i}-B^{\a}_{\eps,h}V^{n}_{j,i})$, which along with \eqref{eq:B_mono} implies again
 \eqref{eq:P} provided that the the following condition  is satisfied:
\bb\l{eq:stability_condition}
1-\rho(1+2d\theta)-\rho\Delta t\bigg( \sum_{m\in \Z^d}d^{\a,n}_{h,k,m,i}+\sum_{m\not=0}(\kappa^{\a,n}_{h,m,i}+\b^{\a,n}_{h,m,i})+ C\bigg)>0,
\ee
which holds for small enough $\rho$. This completes the proof that $\cP$ is a contraction operator.

We now proceed to establish the $\ell^\infty$ stability of the scheme. Let $\{U_j^{n-1}\}_{j=1}^J$ be the solutions to \eqref{eq:implicit}. By expressing the discrete operators  $A^{\a}_{\eps,h,k}$ and $K^{\a,1}_{\eps,h}$ in the monotone form \eqref{eq:A_mono} and \eqref{eq:K_mono}, and  substituting them into \eqref{eq:implicit}, we obtain that
\begin{align*}
[1+2d\theta +&\Delta t\big(\sum_{m\in \Z^d}d^{\a,n}_{h,k,m,i}+ \sum_{m\not =0}\kappa^{\a,n}_{h,m,i}\big)]U^n_{j,i}-\Delta t\big(\sum_{m\in \Z^d}d^{\a,n}_{h,k,m,i}U^n_{j,m}+\sum_{m\not =0}\kappa^{\a,n}_{h,m,i}U^{n}_{j,i+m}\big)\\
&=U^{n-\f{1}{2}}_{j,i}+\Delta t
 \tilde{f}^{\a}(\x^n_{j,i},U^{n}_{j,i},\Delta U^{n}_{j,i}, B^{\a}_{\eps,h}U^{n}_{j,i})+2d\theta U^{n}_{j,i},
\end{align*}
from which we can deduce
\begin{align}
[1+&2d\theta+\Delta t\big(\sum_{m\in \Z^d}d^{\a,n}_{h,k,m,i}+ \sum_{m\not =0}\kappa^{\a,n}_{h,m,i}\big)]U^n_{j,i}-\Delta t\big(\sum_{m\in \Z^d}d^{\a,n}_{h,k,m,i}+\sum_{m\not =0}\kappa^{\a,n}_{h,m,i}\big)|U^{n}_{j,i}|_0\nb\\
&\le \Delta t\big[
 f^{\a}(\x^n_{j,i},U^{n}_{j,i},\Delta U^{n}_{j,i},B^{\a}_{\eps,h}U^{n}_{j,i})- f^{\a}(\x^n_{j,i},0,\Delta U^{n}_{j,i},B^{\a}_{\eps,h}U^{n}_{j,i})\big]
 \l{eq:stab_line1}\\
 &+\Delta t\big[f^{\a}(\x^n_{j,i},0,\Delta U^{n}_{j,i},B^{\a}_{\eps,h}U^{n}_{j,i})- f^{\a}(\x^n_{j,i},0,\Delta U^{n}_{j,i},0)
 \l{eq:stab_line2}\big]\\
&  +|U^{n-\f{1}{2}}_j|_0+( \Delta t [\tilde{f}^{\a}(\x^n_{j,i},0,\Delta U^{n}_{j,i},0)-\tilde{f}^{\a}(\x^n_{j,i},0,0,0)]+2d\theta U^{n}_{j,i})+\Delta t \tilde{f}^{\a}(\x^n_{j,i},0,0,0)
.\nb
\end{align}

Using  similar arguments as those for the upper bound of  \eqref{eq:ctrc_line1},
we deduce that \eqref{eq:stab_line1} is bounded above by $-\Delta tCU^n_{j,i}$ independent of the sign of $U^n_{j,i}$.

Suppose now $B^{\a}_{\eps,h}U^{n}_{j,i}<0$, then we obtain from the monotonicity of $f$ in $k$  that $\eqref{eq:stab_line2}$ is nonpositive. Then the $\ell^\infty$ stability of the numerical flux and the
boundedness of $f^{\a}(\x,0,0,0)$ yield that
\begin{align}\l{eq:Un&n-1/2}
(1+\Delta tC)|U^n_{j}|_0\le |U^{n-\f{1}{2}}_{j}|_0+ \Delta tC_1.
\end{align}
Here $C$ is the constant from Assumption \ref{assum:coeff} and $C_1>0$ is 
a large enough constant that we will choose later.
On the other hand, if $B^{\a}_{\eps,h}U^{n}_{j,i}>0$, the Lipschitz continuity of $f$ in $k$ enables us to bound  $\eqref{eq:stab_line2}$ by $C B^{\a}_{\eps,h}U^{n}_{j,i}$, which along with \eqref{eq:K_mono} implies again \eqref{eq:Un&n-1/2}.


With the estimate \eqref{eq:Un&n-1/2} in hand, we are ready to derive a uniform bound for the solutions 
$\{U_j^n\}$, which is independent of $h$ and $c$. The proof follows from an inductive argument.
Let us introduce  the notation $|U^n|_0=\max_{1\le j\le J}|U_j^n|_0$ for each $n$ and define the term $a_0=\max(|g|_0,|\zeta|_0)$, then it is clear that $a_0\ge \max(|U^0|_0,|\zeta|_0)$. Suppose we have $a_{n-1}$ such that $a_{n-1}\ge \max(|U^{n-1}|_0,|\zeta|_0)$.  Then the definition of $U^{n-\f{1}{2}}_{j,i}$ implies that $|U^{n-\f{1}{2}}_j|_0\le \max(|\zeta|_0,|U^{n-1}|_0)\le a_{n-1} $. Define the term
$$
a_{n}\coloneqq \f{1}{1+\Delta t C}a_{n-1}+\Delta t C_1,
$$
with the same constants as those in \eqref{eq:Un&n-1/2}, then we have $|U^{n}|_0\le a_n$. To proceed by induction, we further require $a_n\ge |\zeta|_0$. Since $a_{n-1}\ge |\zeta|_0$ and $C$ is fixed, it suffices to require $C_1\ge C|\zeta|_0$. In this way, we can construct a sequence $\{a_n\}$, such that $|U^{n}|_0\le a_n$, but $a_n$ is uniformly bounded independent of $c$, $h$ and $\Delta t$, and hence this completes the proof of $\ell^\infty$ stability.

We now study the weak monotonicity of the scheme. Let $V^n_{j,i}\ge U^n_{j,i}$ and $\tilde{V}^{n}_{k,i(j)}\ge \tilde{U}^{n}_{k,i(j)}$ for all  $i,j,k,n$, then we have $V^{n+\f{1}{2}}_{j,i}\ge U^{n+\f{1}{2}}_{j,i}$. Moreover the monotonicity of $f$ in $k$ and the weak monotonicity of $\tilde{f}$ imply that
$$
\sum_{m\in \Z^d}d^{\a,n}_{h,k,m,i}U^{n+1}_{j,m}+ \sum_{m\not =0}\kappa^{\a,n}_{h,m,i}U^{n+1}_{j,i+m}+ \tilde{f}^{\a}(\x^{n+1}_{j,i},U^{n+1}_{j,i}, \Delta U^{n+1}_{j,i},\sum_{m\not =0}\b^{\a,n}_{h,m,i}[U^{n+1}_{j,i+m}-U^{n+1}_{j,i}])
$$
is nondecreasing with $\{U^{b+1}_{j,a}\}_{\textnormal{$(a,b)\!\not =\!(i,n)$}}$, which gives the weak monotonicity of the scheme  \eqref{eq:implicit}.

Finally we study the consistency of the scheme. By using the Lipschitz continuity of $x\to \min(x,a)$, it is clear that it suffices to bound 
\begin{align*}
(I_1)\coloneqq & \Delta t\big(A^{\a}_{\eps,h,k}\phi^{n+1}_{j,i}+K^{\a,1}_{\eps,h}\phi^{n+1}_{j,i}+
 \tilde{f}^{\a}(\x^{n+1}_{j,i},\phi^{n+1}_{j,i}+\xi,\Delta \phi^{n+1}_{j,i}, B^{\a}_{\eps,h}\phi^{n+1}_{j,i})\big)\\
(I_2)\coloneqq&\bigg|\f{\phi^{n+1}_{j,i}-\phi_{j,i}^{n}}{\Delta t}-\big(A^{\a}_{\eps,h,k}\phi^{n+1}_{j,i}+K^{\a,1}_{\eps,h}\phi^{n+1}_{j,i}+
 \tilde{f}^{\a}(\x^{n+1}_{j,i},\phi^{n+1}_{j,i}+\xi,\Delta \phi^{n+1}_{j,i},B^{\a}_{\eps,h}\phi^{n+1}_{j,i})\big)\\
 &-\phi_{j,t}(\x_{j,i}^{n+1})-A_\eps^{\a} \phi_j(\x_{j,i}^{n+1})-K_\eps^{\a} \phi_j(\x_{j,i}^{n+1})-
 f^{\a}(\x^{n+1}_{j,i},\phi(\x^{n+1}_{j,i}),D\phi(\x^{n+1}_{j,i}),B_\eps^{\a}\phi(\x^{n+1}_{j,i}))\bigg|,
\end{align*}
which can be estimated by using Lemma \ref{lemma:consistency_op}, Lemma \ref{lemma:lax}, and the Lipschitz continuity of $f$.
\end{proof}

\begin{Remark}
The contraction operator $\cP$ is introduced to demonstrate our scheme admits a unique solution for any given discretization parameters $\Delta t$, $h$, $k$ and $\eps$. However, due to its low convergence rate, it is not advisable to implement this contraction mapping directly to solve the nonlinear equation \eqref{eq:implicit}. In fact, 
Lemma \ref{lemma:sum_coeff} and the stability condition  \eqref{eq:P} restrict the contraction constant of $\cP$ to admit a lower bound depending on the spatial discretization of the diffusion operator. This undesirable dependence of $\Delta t$ on $k$ can be avoided  by considering the mapping $T$ defined by \eqref{eq:contract_implement}, which is implicit in the local terms. It has been shown that for small enough $h$, the contraction constant of $T$ is proportional to $\theta$, which can be chosen to achieve a rapid convergence. 

\end{Remark}

%

\section{Numerical experiments}\l{sec:numerical}
In this section, we  present several numerical experiments to analyse the effectiveness of the numerical scheme proposed in Section \ref{sec:scheme}. We shall investigate the convergence of numerical solutions with respect to the switching cost, timestep, and mesh size, and show that a relatively coarse discretization of the admissible control set already leads to an accurate approximation.

We consider a portfolio optimization problem over a time interval $[0,T]$, in a framework of recursive utility. An investor can control his wealth process $X^{t,x,\a}$ through a selection of the control process $\a\in \cA^t_t$, say his or her portfolio strategy, and can also choose the duration of the investment via a stopping time $\tau$. If the agent chooses a strategy pair $(\a,\tau)$, then the associated terminal reward is given by
$$
\xi^{t,x,\a}_\tau=\zeta(\tau,X_\tau^{t,x,\a})1_{t\le \tau<T}+g(X_\tau^{t,x,\a})1_{\tau=T}
$$
for some utilities $\zeta$ and $g$, and where {$\tau\in\cT_t^t$, the set of $\mathbb{F}^t$-stopping times valued in $[t,T]$.}

The performance of this investment is evaluated under a 
particular nonlinear expectation, called the  recursive utility process (see e.g.\ 
\cite{chen2002}), which is associated with a BSDE (with Lipschitz continuous drivers). 
It generalizes  the standard additive utilities by including a dependence on the future utility (corresponding to the future wealth).
Roughly speaking, the recursive utility depends on the future utility through the dependance of the driver $f$ on $y$, and can also depend on the ``variability" or ``volatility" of future utility through the dependance of $f$ on $z$ and $k$.

Let $x$ be the wealth at the initial time $t$, $(\a,\tau)$ be the chosen strategy, $\cE^{\a,t,x}[\cdot]$ be a recursive utility function associated with the BSDE with driver $f^\a$.
 The aim of the investor is to maximize the utility of the investment: 
 $$
u(t,x)\coloneqq \sup_{\tau\in \cT_t^t}\sup_{\a\in \cA_t^t} \cE^{t,\a}_{t,\tau}[\xi^{t,x,\a}_\tau],
$$
 over all admissible choices of $(\a,\tau)$. Under Assumption \ref{assum:coeff}, it can be shown  that the value function $u$ of this mixed optimization problem
coincides with the unique bounded viscosity solution of the (backward) HJBVI \eqref{eq:hjbvi}.

For the numerical tests, we  consider a financial market with a risk-free asset  with an interest rate $r$ and a risky asset whose price follows
$$
dS_t=S_{t^-}\bigg[b \, dt+\sigma \, dW_t+\int_E \eta(e)\,\tilde{N}(dt,de)\bigg],
$$
where $W$ is a Brownian motion and $\tilde{N}(dt,de)=N(dt,de)-\nu(de)dt$ is a compensated jump measure. If we denote by $\a_t$  the percentage of the portfolio held in the risky asset at time $t$, then the dynamics of the portfolio is given by
$$
dX_t=\a_tX_{t^-}\bigg[b \, dt+\sigma \, dW_t+\int_E \eta(e)\,\tilde{N}(dt,de)\bigg],\q X_0=x_0.
$$
The performance will be evaluated by the
  recursive utility function induced by the BSDE with the following driver:
 $$
f(t,x,y,z)=\psi(t,x)-\beta y-\kappa|z|.
$$
for some instantaneous reward function $\psi$. 
{
Recall that any  concave utility function admits a dual representation via a set of probability measures    absolutely continuous with respect to the original probability measure $P$ (see e.g. \cite{karoui1994}).  This result allows us to interpret $\kappa\ge 0$ 
as an  ambiguity-aversion coefficient
relative to the Brownian motion as suggested in \cite[Section~3.3]{chen2002}.}

The value function of this control problem satisfies the following HJBVI: 
\bb\l{eq:hjbvi_ex1}
\begin{cases}
\min\big\{
u(t,x)-\zeta(t,x),u_t+\inf_{\a\in [0,1]}\big(-L^\a u-\psi+\b u+ \a\kappa\sigma |x u_x|)\big)
\big\}=0,\\
u(0,x)-g(x)=0
\end{cases}
\ee
for $(t,x)\in [0,T]\t \R$,
where the nonlocal operator $L^\a= A^\a+K^\a$  satisfies for $\phi\in C^2([0,T]\t\R)$
\begin{align}
A^\a\phi(t,x)&= \f{1}{2}\a^2\sigma^2 x^2\phi_{xx}(t,x)+(\a b + (1-\a) r) x \phi_x(t,x),\nb
\\
K^\a\phi(t,x)&=\int_{\R\setminus\{0\}}\big(\phi(t,x+\a x\eta(e))-\phi(t,x)-\a x\eta(e) \phi_x(t,x)\big)\,\nu(de). \l{eq:k_ex1}
\end{align}

We then specify the choice of data for our numerical experiments. We use the exponential utility function $\zeta(t,x)=g(x)=(1-e^{-x})^+$, which determines both the
intermediate and terminal payoff, and acts as the initial condition and the obstacle to the HJBVI.
Moreover, we consider the tempered stable L\'{e}vy measure $\nu(de)=\f{e^{-\mu |e|}}{|e|}de$ on $\R$ with intensity $\eta(e)=1\wedge |e|$ for the jump component 
(which is a special case of the variance Gamma model in \cite{cont2005}). For simplicity, we choose a zero interest rate, i.e., $r=0$.

We further choose the function $\psi(t,x)=0.8\exp(-(T-t))\exp(-x/2)$ as the instantaneous reward. 
As we will see later, this choice of $\psi$ implies that the optimal control $\a$ varies in the state space and evolves in time,
and there can be non-trivial stopping.
The resulting HJBVI will be localized to the domain $(0,2)$ with 
$u(t,x)=g(x)$ for $(t,x)\in (0,T)\t \R\setminus (0,2)$.
The numerical values for the parameters used in the experiments are given in Table \ref{table:ex1parameter}.

\bigskip
\begin{tablehere}
\centering
\begin{tabular}{||c|c|c|c|c|c|c||}\hline
$\b$ &  $\kappa$ & $b$ & $\sigma$  & $\mu$ &$T$&$x_0$\\ \hline
0.2 & 1 & 0.1& 0.15&  6& 1&1\\\hline
\end{tabular}
\caption{Model parameters for the recursive utility maximization problem.}
\label{table:ex1parameter}
\end{tablehere}
\bigskip

Now we are ready to  discuss the  selection of the discretization parameters in detail. The density of the tempered stable measure $\nu$ enables us to improve the estimates in Lemma \ref{lemma:sum_coeff} to 
$\sum_{m\not =0}\kappa^{\a,n}_{h,m,i}\le \log(\eps)$, and hence choosing $\eps=h$ and $\Delta t=O(h)$  leads us to a consistent approximation to the switching system \eqref{eq:hjbvi_s}. 
Moreover, choosing $\theta=\f{1}{40}$ and $\Delta t=\f{h}{15}$
 ensures the numerical flux is stable and the contraction constant of  $T$ in \eqref{eq:contract_implement} is less than $\f{1}{10}$. 
 {The
coefficients of the nonlocal terms are evaluated by the midpoint quadrature formula, which is clearly monotone and consistent.}
 We observe that for the control problem with the parameters as in Table \ref{table:ex1parameter}, the optimal strategy $\a^*$ will always be obtained at one of the endpoints of $[0,1]$.
In fact, using Taylor's theorem, we are able to approximate the nonlocal term $K^\a u$ by
$$
K^\a u(t,x)\approx \f{1}{2}\a^2x^2 \int_{\R\setminus\{0\}} (1\wedge |e|)^2 \,\nu(de) u_{xx}(t,x)=\f{1}{2}\a^2x^2 C u_{xx}(t,x),
$$
at any given $(t,x)$ for which the value function lies above the obstacle and is sufficiently smooth.
Then we infer from the HJBVI \eqref{eq:hjbvi_ex1} that the optimal control $\a^*$ is the  maximizer of a quadratic function on $[0,1]$, which is attained in the interior only if 
$$
u_{xx}(t,x)<0, \q -\f{bu_x-\sigma |u_x|}{(\sigma^2+C)xu_{xx}}\in (0,1).
$$
However, since we have $b<\sigma$, the above conditions can never hold for any $x>0$. Consequently, we deduce that the admissible set is already finite, and replacing $[0,1]$ by $\bA_\delta=\{0,1\}$ in \eqref{eq:hjbvi_ex1} will not introduce any discretiztion error. This has been  confirmed with our numerical experiments. For the sake of simplicity, we  discretise each component of the switching system on a single uniform mesh, thus Condition \ref{assum:scheme} (1) is trivially satisfied. 

Table \ref{table:ex2.1_mesh} contains the numerical solutions to the  last component of the switching system at the grid point $(T,x_0)$ with different mesh size $h$ and switching cost $c$. We  examine the convergence of the numerical solutions, denoted as $U_h$,  in $h$ for  fixed $c$, as well as 
their convergence with respect to the cost $c$.  For any fixed positive switching cost $c$, we infer from the lines (a) that the numerical solutions converge monotonically to the exact solution. Moreover, the lines (c) indicate the approximation error admits an asymptotic  magnitude $O(h)+O(\Delta t)$, which  seems not to be affected by the size of the cost $c$. 
By considering the boldface values in Table \ref{table:ex2.1_mesh}  as an accurate approximation to the exact solution of the switching system with a given cost $c$, 
we can further conclude
that the switching system is consistent to the HJBVI \eqref{eq:hjbvi_ex1} with order $1$. This follows from the approximate factor of four between the differences $0.00469$, $0.00117$, and $0.00029$ between the last three pairs of values, proportional to the reduction in $c$.
Therefore, by taking $c=O(h)$ and $\Delta t=O(h)$, we can obtain a first-order scheme for the HJBVI.


\begin{table}
\centering
\begin{tabular}{||l|l|c|c|c|c|c|c|c||}\hline
$h$ &   & 1/25& 1/50 & 1/100 & 1/200 & 1/400 & 1/800 &1/1600 \\ \hline
$c=\f{1}{40}$ & (a)   &  
0.77538   &  0.77770  & 0.77869   &0.77912   &  
0.77933 &0.77942  & \textbf{0.77947} \\
 & (b) &  & 2321.35  & 984.37  &  437.19 & 203.03 &  97.30  & 47.53\\
 & (c) &  &   &  2.3582 &   2.2516 &   2.1533 &  2.0867 &  2.0471  \\\hline
$c=\f{1}{160}$ &  (a)&  
0.79003   & 0.79194   & 0.79290   & 0.79338   & 0.79361
&  0.79373 & \textbf{0.79379} \\
 & (b) &  &  1916.14 & 954.91  & 476.01  & 237.63 & 118.73   & 59.34
 \\
& (c) &  &   &  2.0066 &     2.0061 &    2.0031 &    2.0015 & 2.0008
\\\hline
$c=\f{1}{640}$ &  (a)  &  
0.79471  &  0.79663 &   0.79759 & 0.79806   & 0.79830
 & 0.79842 &  \textbf{0.79848}   \\
 & (b)   &  & 1917.40 & 955.54 &   476.33 &   237.79 & 118.80 &    59.38\\
  & (c)   &  &   &  
  2.0066  &   2.0061  &  2.0031   &  2.0015 &     2.0008\\
 \hline
 $c=\f{1}{2560}$ & (a)   &  
 0.79588   & 0.79780   & 0.79876  & 0.79923  &0.79947  & 0.79959 & 
 \textbf{0.79965} \\
  &(b)   &    &   1917.71&  955.70 &   476.40 &   237.83 &   118.82 &    59.39\\
  &(c)   &    &  &  2.0066 &    2.0061 &     2.0031&     2.0015 &     2.0008 \\\hline
  $c=\f{1}{10240}$ & (a)   &  
 0.79618   &  0.79810  &0.79905   & 0.79953  &0.79977& 0.79988 &  \textbf{0.79994} \\
  &(b)   &    &   1917.79&  
  955.74 &   476.42 &   237.84 & 118.83 &   59.39\\
  &(c)   &    &  &  2.0066 &     2.0061&     2.0031 &    2.0015
  & 2.0007
   \\\hline
\end{tabular}
\caption{Numerical solutions for  the recursive utility maximization problem with different mesh sizes and switching costs. Shown are: (a) the numerical solutions $U_h$ at $(T,x_0)$; (b) the increments $U_{h}-U_{2h}$ (in $ 10^{-6}$) ; (c) the rate of increments $(U_{h}-U_{2h})/(U_{2h}-U_{4h})$.}
\label{table:ex2.1_mesh}
\end{table}

We then proceed to analyze the effect of the control discretization. We pick the same parameters as those in Table \ref{table:ex1parameter}, except that $b=0.25$, which is chosen such that 
it is now possible that the optimal control is attained in the interior of $(0,1)$ (as seen from a similar argument as earlier). 
Computations are performed using \textsc{Matlab} R2016b on a 3.30GHz Intel Xeon ES-2667 16-Core processor with 256GB RAM to enable parallelization.  Table \ref{table:ex2.1_ctrl} illustrates the numerical results for different control meshes ($J=1/\delta+1$) with a fixed mesh size $h=0.005$ and switching cost $c=1/2560$, and also compares   the runtime with or without parallelization.

We can clearly observe from line (a) second order convergence of the numerical solutions, and a relatively coarse control mesh has already yielded an accurate approximation with a negligible control discretization error. 

Next, we discuss lines (b)--(f) which analyse the algorithm's parallel efficiency.
Hereby, the implicit finite difference scheme for individual components of the switching system (i.e., \eqref{eq:PCCT}, for different $j$) is solved independently on different processors,
while the maximisation step \eqref{eq:Un+1/2} requires communication between processors.

The total execution time with and without parallelization are presented in line (b) and (c), respectively, which indicate a significant reduction of computational times. 
Moreover, by subtracting the communication time among clusters, as shown in line (d), from the total runtime, we can obtain the actual time spent on executing the numerical scheme (line (e)). The speed-up rate of the parallelization is shown in line (f), which grows with the number of controls, and remains stable at the number of cores. Therefore, together with parallelization,  piecewise constant timestepping enables us to achieve a high accuracy in the control discretization without significantly increasing the computational time, which is an advantage over policy iterations, which do not parallelise naturally.


We finally examine the impact of the computational domain by performing computations on  $(0,3)$ with $h=1/400$,  $\Delta t=h/20$, $c=1/640$ and the parameters as in Table \ref{table:ex1parameter}. Compared to the results in Table \ref{table:ex2.1_mesh}, this larger domain leads to a relative difference of $7.53 \cdot 10^{-7}$, which is negligible compared to the time and spatial discretization errors.

\bigskip
\begin{tablehere}
\centering
\begin{tabular}{||l|c|c|c|c|c|c||} \hline
$J$ &&  2 & 11 & 21  & 41  & 81\\ \hline 
average error ($\times 10^{-7}$) &(a)& 1.1392 & 0.2398 &  0.0570  &  0.0133  &  0   \\\hline
runtime in seconds &(b) & 200.014 & 1106.266   &   2142.714  & 4204.336   & 8536.232   \\
 &(c) & 155.977 & 228.594    & 331.294     & 501.895  & 809.178  \\
 &(d) &53.728 & 124.390   &  192.634  & 230.019  & 262.488   \\
&(e) & 102.249 & 104.204   &  138.660  & 271.876   & 546.690  \\\hline
 speed-up rate (b/e)& (f)& 1.9561 & 10.61   &  15.45  & 15.46   & 15.61  \\\hline
\end{tabular}
\caption{Numerical results for  the recursive utility maximization problem with different control refinements. Shown are: (a) the difference to the solution with the finest refinement; (b) total runtime time without parallelization; (c) total runtime time with parallelization on 16 processors; (d) total communication time among processors;
(e) net parallel computation time without communication; (f) speed-up rate of parallelization. 
}
\label{table:ex2.1_ctrl}
\end{tablehere}
\bigskip

The numerical value function and the corresponded feedback control strategy with $J=21$ are presented in Figure \ref{fig:valuefunction}, in which the white area represents the region where the obstacle is active, and otherwise the colour indicates the value of the optimal control, as shown in the panel on the right.
The approximation to the optimal control pair $(\tau,\alpha)$ was found from the numerical solution as follows (see also \eqref{eq:Un+1/2} and Remark \ref{rem:controls}),
noting that in our tests $x_{j,i}=x_{k,i}$ for all $j,k$, and therefore no interpolation is needed:
\begin{eqnarray*}
i_n^* &\in& {\rm argmax}_{k}{U}^{n}_{k,i}, \\
\theta_i^n &=& \left\{
\begin{array}{rl}
0 & \max_{k}{U}^{n}_{k,i} > \xi(t_n,x_{1,i}), \\
1 & \max_{k}{U}^{n}_{k,i} \le \xi(t_n,x_{1,i}),
\end{array}
\right.
\end{eqnarray*}
where $\alpha_i^n = \alpha_{i_n^*}$ is an approximation to the optimal policy and $\{(t_n,x_{1,i}): \theta_i^n=1\}$ is an approximation to the stopping region.

\bigskip
\begin{figurehere}
    \centering
    \includegraphics[width=0.49\columnwidth,height=6cm]{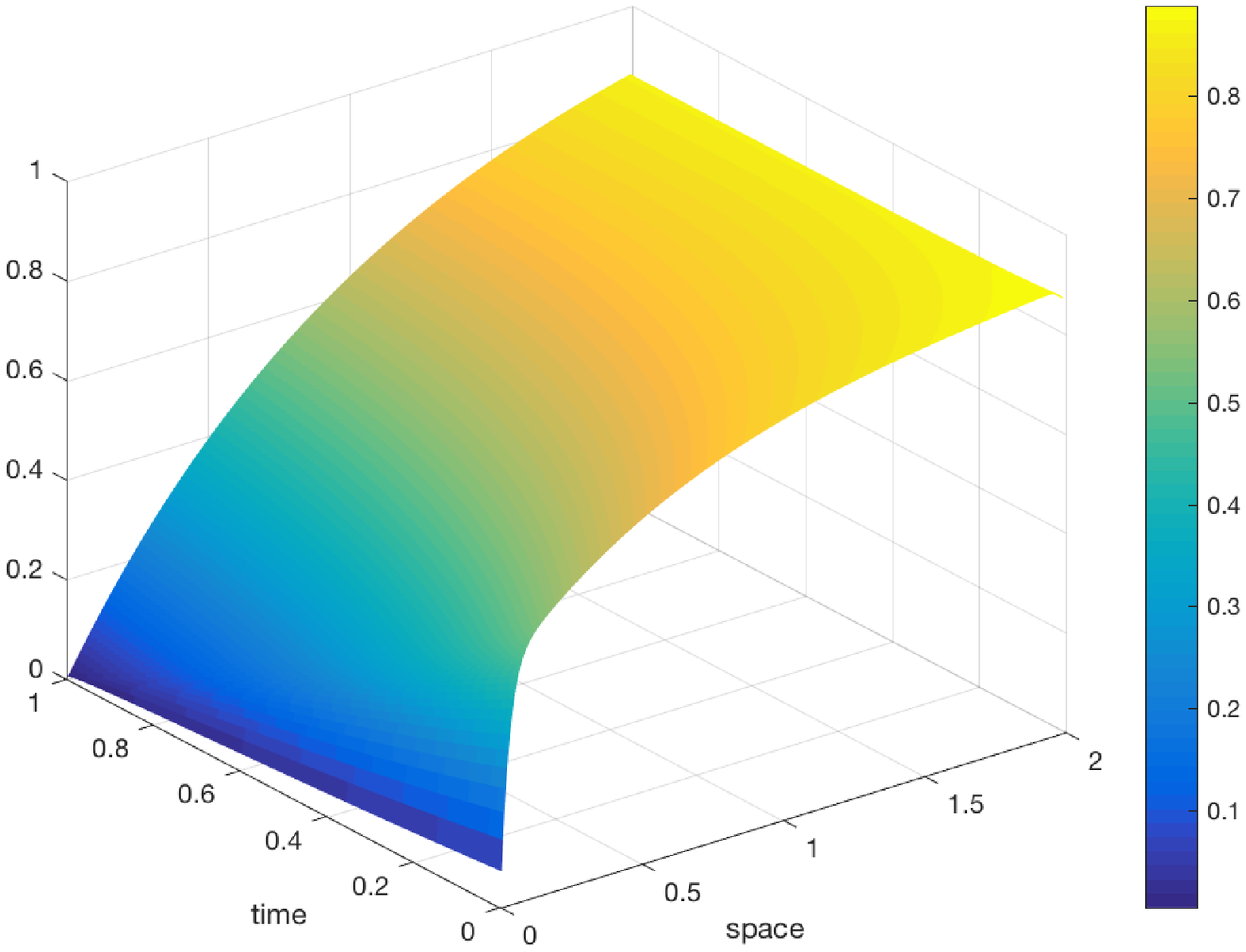}\hfill
    \includegraphics[width=0.49\columnwidth,height=6cm]{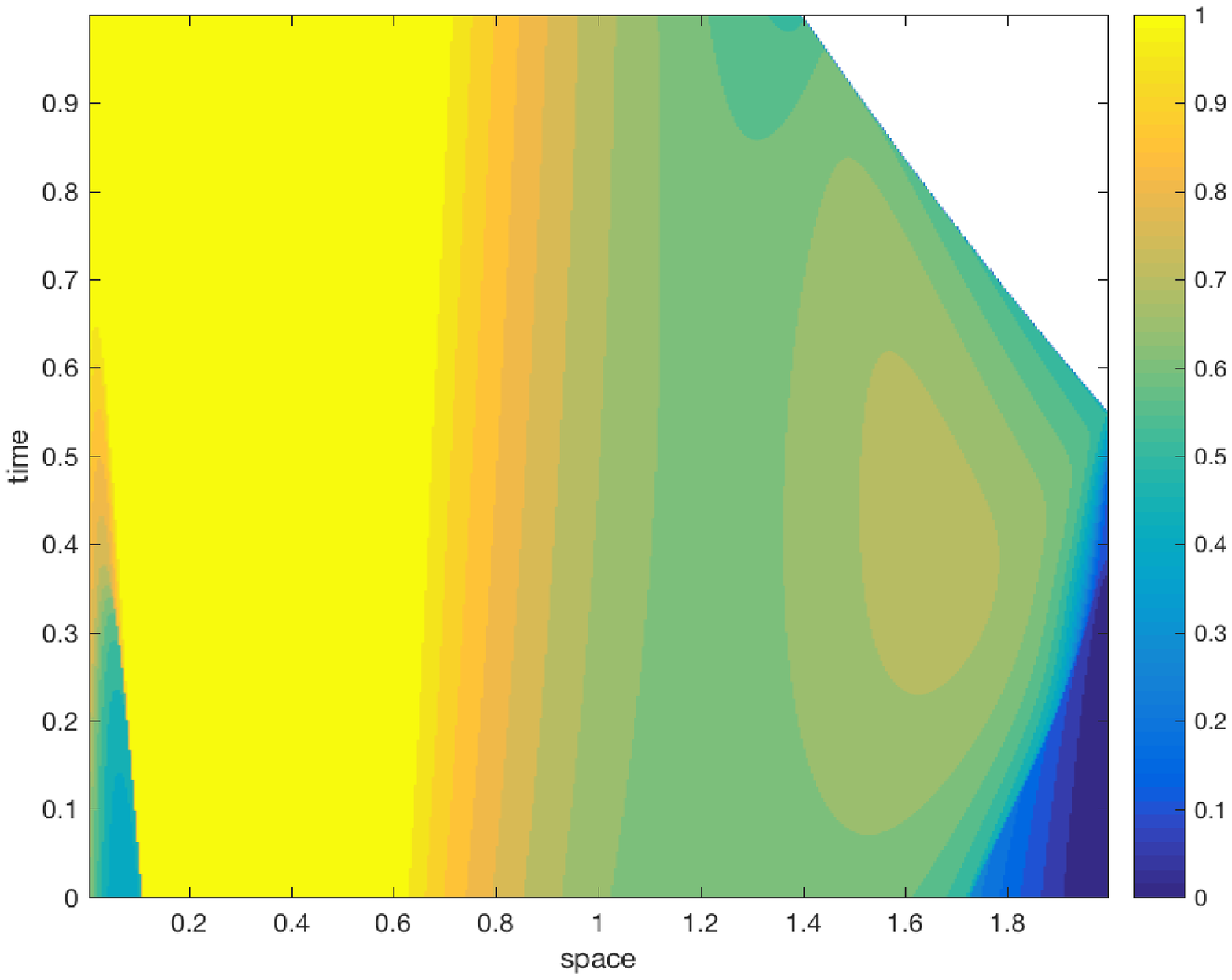}
    \caption{Numerical value functions (left) and corresponding control strategies with $J=21$ (right), where the early stopping region is white.}
    \label{fig:valuefunction}
 \end{figurehere}
 \bigskip


\section{Conclusions}

This paper provides a PDE approximation scheme for the value function of a mixed stochastic control/optimal stopping problem with nonlinear expectations and infinite activity jumps, which is the unique {viscosity} solution of a nonlocal HJB variational inequality. The approach that we have adopted is based on piecewise constant policy time stepping (PCPT), which reduces the problem to a system of semi-linear PDEs, and a monotone approximation scheme.
{We prove the convergence of the numerical scheme} and illustrate  the theoretical results with some numerical examples in the case of a recursive utility maximisation problem. 

To the best of our knowledge, this is the first paper which proposes a numerical approximation for a control problem in such a generality. 
Natural next steps would be to derive theoretical results on the convergence rate and to extend this approach to the case of Hamilton-Jacobi-Bellman-Isaac equations obtained in \cite{buckdahn2008}.

\appendix


\section{Consistency of control discretization}
\label{app:cont-disc}

\begin{proof}
For any given $\x\in \cQ_T$, $\delta>0, \xi\in \R$ and  $\phi\in C^{1,2}(\bar{\cQ}_T)$, 
by using the expressions of $F^\eps$ and $F^{\eps,\delta}$ and the fact that $y\to \min(a,y)$ is Lipschitz continuous with  constant 1, we have
\begin{align}
|F&^\eps(\x,\phi(\x),D\phi(\x),D^2\phi(\x))-F^{\eps,\delta}(\x,\phi(\x)+\xi,D\phi(\x),D^2\phi(\x))|\nb\\
&\le |\xi|+|\sup_{\a\in \bA}\cH^\a \phi(\x)-\sup_{\a\in \bA_\delta}\cH^\a (\phi+\xi)(\x)|\nb\\
&\le|\xi|+ |\sup_{\a\in \bA}\cH^\a \phi(\x)-\sup_{\a\in \bA}\cH^\a (\phi+\xi)(\x)|+|\sup_{\a\in \bA}\cH^\a (\phi+\xi)(\x)-\sup_{\a\in \bA_\delta}\cH^\a (\phi+\xi)(\x)|,\l{eq:F-Fdel}
\end{align}
where we denote for simplicity $\cH^\a \phi(\x_0)=L_\eps^\a \phi(\x_0)+f^\a(\x_0,\phi(\x_0),\tilde{\sigma}^\a\cdot D\phi(\x_0),B_\eps^\a \phi(\x_0))$. 

We first estimate the second term in \eqref{eq:F-Fdel}. Assumption \ref{assum:coeff} implies that $\cH^\a \phi(\x)$ and $\cH^\a (\phi+\xi)(\x) $ are continuous in $\a$. Hence the supremum of  $\cH^\a \phi(\x)$ and  $\cH^\a (\phi+\xi)(\x)$ on the compact set $\bA$ are attained at $\a^*$ and $\a^*_\xi$, respectively.  Moreover, we deduce from \eqref{eq:a}, \eqref{eq:k} and \eqref{eq:b} that $A_\eps^\a\phi(\x)=A_\eps^\a(\phi+\xi)(\x)$, $B_\eps^\a\phi(\x)=B_\eps^\a(\phi+\xi)(\x)$ and $K_\eps^\a\phi(\x)=K_\eps^\a(\phi+\xi)(\x)$, and consequently
\begin{align*}
\sup_{\a\in \bA}&\cH^\a (\phi+\xi)(\x)-\sup_{\a\in \bA}\cH^\a \phi(\x)\le \cH^{\a_\xi^*} (\phi+\xi)(\x)-\cH^{\a_\xi^*} \phi(\x)\\
&=f(\a_\xi^*,\x,\phi(\x)+\xi,\tilde{\sigma}^{\a_\xi^*}\cdot D\phi(\x),B_\eps^{\a_\xi^*} \phi(\x))-f(\a_\xi^*,\x,\phi(\x),\tilde{\sigma}^{\a_\xi^*}\cdot D\phi(\x),B_\eps^{\a_\xi^*} \phi(\x))\le C|\xi|,
\end{align*}
where for the last inequality we used the Lipschitz continuity of $f$ in $u$. By reversing the roles of $\sup_{\a\in \bA}\cH^\a \phi(\x)$ and $\sup_{\a\in \bA}\cH^\a (\phi+\xi)(\x)$, we  get 
\bb\l{eq:phi-phixi}
|\sup_{\a\in \bA}\cH^\a \phi(\x)-\sup_{\a\in \bA}\cH^\a (\phi+\xi)(\x)|\le C|\xi|.
\ee

We then consider the last term in \eqref{eq:F-Fdel}. The construction of $\bA_\del$ implies that there is $\a^*_\delta\in\bA_\del$ such that $|\a_\xi^*-\a^*_\delta|<\delta$. Moreover, by the compactness of $\bA$ and the Lipschitz continuity of coefficients in $\x$ uniformly in $\a$, we deduce there exists a function $\omega_0$ such that at  $\x=(t,x)$,
\begin{align*}
&|\tilde{\sigma}^{\a_\xi^*}(\tilde{\sigma}^{\a_\xi^*})^T-\tilde{\sigma}^{\a^*_\delta}(\tilde{\sigma}^{\a^*_\delta})^T|+|b^{\a_\xi^*}-b^{\a_\delta^*}|+|\tilde{\sigma}^{\a_\xi^*}-\tilde{\sigma}^{\a_\delta^*}|\le \omega_0(\x,\delta),\\
&|(\eta^{\a_\xi^*}(x,e))^T(\eta^{\a_\xi^*}(x,e))-(\eta^{\a^*_\delta}(x,e))^T\eta^{\a^*_\delta}(x,e)|\le \omega_0(\x,\del)(1\wedge |e|^2),\\
&|\eta^{\a_\xi^*}(x,e)-\eta^{\a^*_\delta}(x,e)|\le\omega_0(\x,\del)(1\wedge |e|),
\end{align*}
and the function $\omega_0(\x,\del)$ is locally Lipchitz continuous with $\x$ for all $\delta\le 1$, 
and satisfies $\omega_0(\x,\del)\to 0$ as $\del\to 0$. These estimates lead us to 
the inequality
$$
|A_\eps^{\a_\xi^*}(\phi+\xi)(\x)-A_\eps^{\a_\del^*}(\phi+\xi)(\x)|\le \omega_0(\x,\del)\max(|D\phi(\cdot)|_{B(\x,1)}|,|D^2\phi(\cdot)|_{B(\x,1)}|),
$$
and also together with the Taylor expansion and the fact $|\eta|\le C$,  enable us to derive that 
\begin{align*}
|&K_\eps^{\a_\xi^*}(\phi+\xi)(\x)-K_\eps^{\a_\del^*}(\phi+\xi)(\x)|\\
&\le 
\int_E \bigg|(\eta^{\a_\xi^*}(x,e))^T\bigg(\int_0^1 (1-s)D^2\phi(t,x+\eta^{\a_\xi^*}(x,e))\,ds\bigg)\eta^{\a_\xi^*}(x,e) \\
&-(\eta^{\a_\del^*}(x,e))^T\bigg(\int_0^1 (1-s)D^2\phi(t,x+\eta^{\a_\del^*}(x,e))\,ds\bigg)\eta^{\a_\del^*}(x,e)\bigg|\,\nu(de)\le C\omega_0(\x,\del)|D^2\phi(\cdot)|_{B(\x,C)},\\
|&B_\eps^{\a_\xi^*}(\phi+\xi)(\x)-B_\eps^{\a_\del^*}(\phi+\xi)(\x)|
\le
\int_E \bigg|\bigg((\eta^{\a_\xi^*}(x,e))^T\int_0^1 \phi(t,x+\eta^{\a_\xi^*}(x,e))\,ds\bigg)\\ 
&-\bigg((\eta^{\a_\del^*}(x,e))^T\int_0^1 D^2\phi(t,x+\eta^{\a_\del^*}(x,e))\,ds\bigg)\bigg|\,\nu(de)\le C\omega_0(\x,\del)|D\phi(\cdot)|_{B(\x,C)},
\end{align*}
and consequently we obtain from the Lipschitz continuity of $f$ that 
\begin{align*}
\sup_{\a\in \bA}\cH^\a (\phi+\xi)(\x)&-\sup_{\a\in \bA_\del}\cH^\a (\phi+\xi)(\x)\le \cH^{\a_\xi^*} (\phi+\xi)(\x)-\cH^{\a_\del^*} (\phi+\xi)(\x)
\le \omega_1(\x,\del),
\end{align*}
for a suitable defined $\omega_1(\x,\del)$ with the properties of $\omega_0(\x,\del)$.
Therefore, using \eqref{eq:F-Fdel}, \eqref{eq:phi-phixi} and the fact that $\bA_\del\subset\bA$, we have
$$
 |\sup_{\a\in \bA}\cH^\a \phi(\x)-\sup_{\a\in \bA_\delta}\cH^\a (\phi+\xi)(\x)|\le \omega_1(\x,\del)+C|\xi|,
$$
which  completes the proof of our desired result.
\end{proof}

\section{Comparison principle for switching systems}\l{sec:comparison}
In this section, we  establish the comparison principle for switching system \eqref{eq:hjbvi_s}, cf.\ Theorem~\ref{thm:comparision_s}. 
We consider a slightly more general switching system with no truncation of the singular measure in $K_\eps$ and $B_\eps$, which includes as a special case the switching system  \eqref{eq:hjbvi_s}.
We first use a classical no-loop argument 
to reduce the problem into scalar cases, and then analyze the scalar HJBVI by extending the results for continuous  solutions in \cite{dumitrescu2015} to semicontinuous  viscosity solutions. 
For  simplicity,  we denote by $\sigma$ the modified diffusion coefficient $\tilde{\sigma}^\a$ defined as \eqref{eq:small_diff}.

\begin{proof}[Proof of Theorem \ref{thm:comparision_s}]
Set $$M=\sup_{1\le j\le J,\x,\y\in \cQ_T}(U_j(\x)-V_j(\y)).$$
It suffices to show that $M\le 0$. For any given $\eps,\rho>0$, we introduce the functions
\bb
\psi_j^{\eps,\rho}(t,s,x,y)=U_j(t,x)-V_j(s,y)-\f{|x-y|^2}{\eps^2}-\f{|t-s|^2}{\eps^2}-\rho^2(|x|^2+|y|^2),\q j=1,\ldots, J, \l{eq:psi_j}
\ee
for each $t,s\in [0,T]$ and $x,y\in \R^d$, and define the quantity
$$
M^{\eps,\rho}\coloneqq \sup_{j,t,s,x,y}\psi_j^{\eps,\rho}(t,s,x,y).
$$
The upper semicontinuity and boundedness of $U_j-V_j$, along the penalization terms, imply that the supremum is obtained at some point $(j^{\eps,\rho},t^{\eps,\rho},s^{\eps,\rho},x^{\eps,\rho},y^{\eps,\rho})$. Then as in \cite{dumitrescu2015}, { one can find a constant $C$ such that } $|t^{\eps,\rho}-s^{\eps,\rho}|+|x^{\eps,\rho}-y^{\eps,\rho}|\le C\eps$, $|x^{\eps,\rho}|\le \f{C}{\rho}$, and  $|y^{\eps,\rho}|\le \f{C}{\rho}$. Passing to a  subsequence if necessary, we may assume that for each $\rho$, the sequences $\{t^{\eps,\rho}\}_\eps$ and $\{s^{\eps,\rho}\}_\eps$ converge to a common limit $t^\rho$, while  the sequences $\{x^{\eps,\rho}\}_\eps$ and $\{y^{\eps,\rho}\}_\eps$ converge to a common limit $x^\rho$ as $\eps$ tends to $0$. Moreover,  $j^{\eps,\rho}$ lies in a finite set, we may assume $j^{\eps,\rho}=j^\rho$ for all $\eps$. The following lemma gives the convergence  of these sequences, whose proof will be deferred after the proof of Theorem \ref{thm:comparision_s}.

\begin{Lemma}\l{lemma:convlemma}
Extracting  a further subsequence if necessary, we have
\begin{align*}
&\lim_{\eps\to 0}\f{|x^{\eps,\rho}-y^{\eps,\rho}|^2}{\eps^2}=\lim_{\eps\to 0}\f{|t^{\eps,\rho}-s^{\eps,\rho}|^2}{\eps^2}=0, \q 
\lim_{\eps\to 0}U_{j^\rho}(t^{\eps,\rho},x^{\eps,\rho})=U_{j^\rho}(t^\rho,x^\rho)\\
&\lim_{\eps\to 0}V_{j^\rho}(s^{\eps,\rho},y^{\eps,\rho})=V_{j^\rho}(t^\rho,x^\rho),\q \lim_{\rho\to 0} \rho^2|x^\rho|^2=0,\q \lim_{\rho\to 0}\lim_{\eps\to 0}M^{\eps,\rho}=M.
\end{align*}
\end{Lemma}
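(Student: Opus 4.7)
The plan is to follow the classical Crandall--Ishii--Lions doubling-of-variables and penalization machinery, adapted to the present semicontinuous setting. First I would exploit the trivial lower bound obtained by evaluating $\psi_{j^\rho}^{\eps,\rho}$ at the diagonal point $s=t$, $y=x$: this eliminates the $\eps$-penalty altogether and gives, uniformly in $\eps$,
$$
M^{\eps,\rho}\ge \tilde M^\rho\coloneqq \sup_{j,t,x}\bigl[U_j(t,x)-V_j(t,x)-2\rho^2|x|^2\bigr].
$$
Specialising this lower bound to $(j,t,x)=(j^\rho,t^\rho,x^\rho)$ and combining with the maximality identity $M^{\eps,\rho}=U_{j^\rho}(t^{\eps,\rho},x^{\eps,\rho})-V_{j^\rho}(s^{\eps,\rho},y^{\eps,\rho})-\Phi^{\eps,\rho}$, where $\Phi^{\eps,\rho}$ collects all four penalty terms, yields the key rearrangement
$$
\f{|x^{\eps,\rho}-y^{\eps,\rho}|^2}{\eps^2}+\f{|t^{\eps,\rho}-s^{\eps,\rho}|^2}{\eps^2}\le \bigl[U_{j^\rho}(t^{\eps,\rho},x^{\eps,\rho})-U_{j^\rho}(t^\rho,x^\rho)\bigr]+\bigl[V_{j^\rho}(t^\rho,x^\rho)-V_{j^\rho}(s^{\eps,\rho},y^{\eps,\rho})\bigr]+o_\eps(1),
$$
where $o_\eps(1)=2\rho^2|x^\rho|^2-\rho^2(|x^{\eps,\rho}|^2+|y^{\eps,\rho}|^2)\to 0$ by the convergences $x^{\eps,\rho},y^{\eps,\rho}\to x^\rho$ already established before the lemma.

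Next I would pass to $\limsup$ as $\eps\to 0$ along a further subsequence. Upper semicontinuity of the subsolution component $U_{j^\rho}$ gives $\limsup U_{j^\rho}(t^{\eps,\rho},x^{\eps,\rho})\le U_{j^\rho}(t^\rho,x^\rho)$, while lower semicontinuity of the supersolution component $V_{j^\rho}$ gives $\liminf V_{j^\rho}(s^{\eps,\rho},y^{\eps,\rho})\ge V_{j^\rho}(t^\rho,x^\rho)$; hence both bracketed terms on the right above are non-positive in the limit. Since the left-hand side is non-negative, this forces the two $\eps$-penalties to vanish in the limit. The main subtlety is then to upgrade this to the claimed separate convergence of $U_{j^\rho}(t^{\eps,\rho},x^{\eps,\rho})$ and $V_{j^\rho}(s^{\eps,\rho},y^{\eps,\rho})$: any asymptotic negative slack in one bracket would have to be compensated by a strictly positive slack in the other in order for the sum to remain non-negative, which is incompatible with semicontinuity. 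This pinching argument produces $\lim U_{j^\rho}(t^{\eps,\rho},x^{\eps,\rho})=U_{j^\rho}(t^\rho,x^\rho)$ and $\lim V_{j^\rho}(s^{\eps,\rho},y^{\eps,\rho})=V_{j^\rho}(t^\rho,x^\rho)$, completing the $\eps$-level statements.

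For the $\rho$-level statements I would first note that the previous step identifies $\lim_\eps M^{\eps,\rho}=U_{j^\rho}(t^\rho,x^\rho)-V_{j^\rho}(t^\rho,x^\rho)-2\rho^2|x^\rho|^2$; combined with the two-sided comparison $\tilde M^\rho\le \lim_\eps M^{\eps,\rho}\le \tilde M^\rho$ (the right inequality is the definition of $\tilde M^\rho$ applied at $(j^\rho,t^\rho,x^\rho)$), this forces $\lim_\eps M^{\eps,\rho}=\tilde M^\rho$. A standard argument, picking a maximising sequence $(j_n,t_n,x_n)$ for $M$, evaluating $\tilde M^\rho$ at it, and letting $\rho\to 0$ before $n\to\infty$, then shows $\tilde M^\rho\to M$. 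Finally, since $U_{j^\rho}(t^\rho,x^\rho)-V_{j^\rho}(t^\rho,x^\rho)\le M$ by definition of $M$, the identity $2\rho^2|x^\rho|^2=U_{j^\rho}(t^\rho,x^\rho)-V_{j^\rho}(t^\rho,x^\rho)-\tilde M^\rho\le M-\tilde M^\rho$ gives $\rho^2|x^\rho|^2\to 0$, and $\lim_{\rho\to 0}\lim_{\eps\to 0}M^{\eps,\rho}=\lim_\rho \tilde M^\rho=M$ simultaneously. The hardest point will be the semicontinuity-splitting step described in the middle paragraph, since everything else is essentially bookkeeping of the penalty terms.
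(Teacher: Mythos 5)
Your proof is correct and follows essentially the same route as the paper: the key inequality $M^\rho\le M^{\eps,\rho}$ obtained by diagonal evaluation, the rearrangement that bounds the $\eps$-penalties by semicontinuity slacks, the pinching argument to extract the separate limits for $U_{j^\rho}$ and $V_{j^\rho}$, and the identification $\lim_\eps M^{\eps,\rho}=\tilde M^\rho$ followed by $\tilde M^\rho\to M$. The only small deviation is in proving $\rho^2|x^\rho|^2\to 0$: you use the direct bound $2\rho^2|x^\rho|^2\le M-\tilde M^\rho$, whereas the paper instead compares $M^{\rho/2}$ with $M^\rho$ to get $\rho^2|x^\rho|^2\le \tfrac{2}{3}(M^{\rho/2}-M^\rho)$; both are immediate consequences of $\tilde M^\rho\to M$ and equally valid.
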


We now divide our analysis into three cases to establish $M\le 0$.

If there exists a subsequence of $\{t^\rho\}$ such that $t^\rho=0$ for all $\rho$, we then deduce $M\le 0$ along this subsequence by adapting the arguments in \cite{dumitrescu2015} to semicontinuous solutions. 

On the other hand, if  $t^\rho$ is different  from $0$ for all $\rho$, then for any fixed $\rho$ and small enough $\eps$, using Lemma \ref{lemma:system}, which can be proved similarly as Lemma 4.1 in \cite{biswas2010}, we know there exists $j_0^{\eps,\rho}\in \{1,\ldots, J\}$, which for simplicity is still denoted as $j^{\eps,\rho}$, such that $U_{j^{\eps,\rho}}(t^{\eps,\rho},x^{\eps,\rho})>\cM_{j^{\eps,\rho}}U(t^{\eps,\rho},x^{\eps,\rho})$. In other words, at the point 
$(t^{\eps,\rho},s^{\eps,\rho},x^{\eps,\rho},y^{\eps,\rho})$, by considering the $j^{\eps,\rho}$ component of the switching system, we can without loss of generality ignore the term $U_{j^{\eps,\rho}}-\cM_{j^{\eps,\rho}}U$ in the definition of subsolutions and get back to the scalar HJBVI. 

In this case, if we further assume for each $\rho$, there exists a subsequence of $\{ x^{\eps,\rho}\}_\eps$ such that $(U_{j^\rho}-\zeta)(t^{\eps,\rho},x^{\eps,\rho})\le 0$,
then the same arguments in \cite{dumitrescu2015} enables us to derive that $M\le 0$.

Now we come to the final case, where for each $\rho, \eps>0$, we have $(U_{j^\rho}-\zeta)(t^{\eps,\rho},x^{\eps,\rho})> 0$. Applying the nonlocal Jensen-Ishii's lemma as in \cite{dumitrescu2015} enables us to obtain 
$a\in \R$, $\bar{p}, \bar{q}\in \R^d$, and $X,Y\in \R^{d\t d}$
such that it holds for any $\delta>0$ that
\bb\l{eq:ishii}
H_{j^\rho}(t^{\eps,\rho},x^{\eps,\rho},U_{j^\rho}(t^{\eps,\rho},x^{\eps,\rho}),a,\bar{p},X,l_K,l_B)-H_{j^\rho}(s^{\eps,\rho},y^{\eps,\rho},V_{j^\rho}(s^{\eps,\rho},y^{\eps,\rho}),a,\bar{q},Y,l'_K,l'_B)\le 0,
\ee
where  $l_K,l'_K,l_B,l'_B$ are defined as in \cite{dumitrescu2015} for each $\delta>0$ and $H_{j^\rho}(t,x,u,a,p,X,l_1,l_2)$ is given by:
$$
H_{j^\rho} \coloneqq a-\tr(\sigma(\a_{j^\rho},x)(\sigma(\a_{j^\rho},x))^TX)-b(\a_{j^\rho},x)^T p-l_1-f(\a_{j^\rho},t,x,u,\sigma({\a_{j^\rho}},x)^T p,l_2).
$$

We now extend the arguments in \cite{dumitrescu2015} to semicontinuous subsolution $U$ {(resp.\ supersolution $V$)} and argue by contradiction by assuming $M>0$. Then for small enough $\rho,\eps, \delta>0$, we obtain from the monotonicity of $f$ in $u$ that there exists a constant $C>0$ satisfying
\begin{align}
0<&\f{C}{2}M\le CM^{\eps,\rho}\le C(U_{j^\rho}(t^{\eps,\rho},x^{\eps,\rho})-V_{j^\rho}(s^{\eps,\rho},y^{\eps,\rho}))\nb\\
\le& H_{j^\rho}(s^{\eps,\rho},y^{\eps,\rho},U_{j^\rho}(t^{\eps,\rho},x^{\eps,\rho}),a,\bar{q},Y,l'_K,l'_B)-H_{j^\rho}(s^{\eps,\rho},y^{\eps,\rho},V_{j^\rho}(s^{\eps,\rho},y^{\eps,\rho}),a,\bar{q},Y,l'_K,l'_B)\nb\\
=&H_{j^\rho}(s^{\eps,\rho},y^{\eps,\rho},U_{j^\rho}(t^{\eps,\rho},x^{\eps,\rho}),a,\bar{q},Y,l'_K,l'_B)-H_{j^\rho}(t^{\eps,\rho},x^{\eps,\rho},U_{j^\rho}(t^{\eps,\rho},x^{\eps,\rho}),a,\bar{p},X,l'_K,l'_B)\l{eq:line1}\\
+&H_{j^\rho}(t^{\eps,\rho},x^{\eps,\rho},U_{j^\rho}(t^{\eps,\rho},x^{\eps,\rho}),a,\bar{p},X,l'_K,l'_B)
-H_{j^\rho}(t^{\eps,\rho},x^{\eps,\rho},U_{j^\rho}(t^{\eps,\rho},x^{\eps,\rho}),a,\bar{p},X,l_K,l_B)
\l{eq:line2}\\
+&
H_{j^\rho}(t^{\eps,\rho},x^{\eps,\rho},U_{j^\rho}(t^{\eps,\rho},x^{\eps,\rho}),a,\bar{p},X,l_K,l_B)-H_{j^\rho}(s^{\eps,\rho},y^{\eps,\rho},V_{j^\rho}(s^{\eps,\rho},y^{\eps,\rho}),a,\bar{q},Y,l'_K,l'_B),
\l{eq:line3}
\end{align}
from which, by expanding \eqref{eq:line1}, using  the fact that $f$ is Lipschitz continuous and monotone in $k$ for \eqref{eq:line2},
 and applying \eqref{eq:ishii} to \eqref{eq:line3}, we can derive that
\begin{align*}
 0&\le CM/2\le l_K-l'_K+C(l_B-l'_B)+[b(\a_{j^\rho},x^{\eps,\rho})^T \bar{p}-b(\a_{j^\rho},y^{\eps,\rho})^T \bar{q}]\\
 +&\f{1}{2}
\big[\tr(\sigma(\a_{j^\rho},x^{\eps,\rho})(\sigma(\a_{j^\rho},x^{\eps,\rho}))^TX-\sigma(\a_{j^\rho},y^{\eps,\rho})(\sigma(\a_{j^\rho},y^{\eps,\rho}))^TY)\big]\\
+&f(\a_{j^\rho}, t^{\eps,\rho},x^{\eps,\rho},U_{j^\rho}(t^{\eps,\rho},x^{\eps,\rho}),
\sigma(\a_{j^\rho},x^{\eps,\rho})^T\bar{p},l'_B)
-f(\a_{j^\rho}, s^{\eps,\rho},x^{\eps,\rho},U_{j^\rho}(t^{\eps,\rho},x^{\eps,\rho}),
\sigma(\a_{j^\rho},x^{\eps,\rho})^T\bar{p},l'_B)\\
+&f(\a_{j^\rho}, s^{\eps,\rho},x^{\eps,\rho},U_{j^\rho}(t^{\eps,\rho},x^{\eps,\rho}),
\sigma(\a_{j^\rho},x^{\eps,\rho})^T\bar{p},l'_B)
-f(\a_{j^\rho}, s^{\eps,\rho},y^{\eps,\rho},U_{j^\rho}(t^{\eps,\rho},x^{\eps,\rho}),
\sigma(\a_{j^\rho},x^{\eps,\rho})^T\bar{p},l'_B)\\
+&f(\a_{j^\rho}, s^{\eps,\rho},y^{\eps,\rho},U_{j^\rho}(t^{\eps,\rho},x^{\eps,\rho}),
\sigma(\a_{j^\rho},x^{\eps,\rho})^T\bar{p},l'_B)
-f(\a_{j^\rho}, s^{\eps,\rho},y^{\eps,\rho},U_{j^\rho}(t^{\eps,\rho},x^{\eps,\rho}),
\sigma(\a_{j^\rho},y^{\eps,\rho})^T\bar{q},l'_B)\\
\le& 
l_K-l'_K+C(l_B-l'_B)+\f{1}{2}
\big[\tr(\sigma(\a_{j^\rho},x^{\eps,\rho})(\sigma(\a_{j^\rho},x^{\eps,\rho}))^TX-\sigma(\a_{j^\rho},y^{\eps,\rho})(\sigma(\a_{j^\rho},y^{\eps,\rho}))^TY)\big]\\
+&f(\a_{j^\rho}, t^{\eps,\rho},x^{\eps,\rho},U_{j^\rho}(t^{\eps,\rho},x^{\eps,\rho}),
\sigma(\a_{j^\rho},x^{\eps,\rho})^T\bar{p},l'_B)
-f(\a_{j^\rho}, s^{\eps,\rho},x^{\eps,\rho},U_{j^\rho}(t^{\eps,\rho},x^{\eps,\rho}),
\sigma(\a_{j^\rho},x^{\eps,\rho})^T\bar{p},l'_B)\\
+&m_R(|x^{\eps,\rho}-y^{\eps,\rho}|(1+|\sigma(\a_{j^\rho},x^{\eps,\rho})^T \bar{p}|)\\
+&[b(\a_{j^\rho},x^{\eps,\rho})^T \bar{p}-b(\a_{j^\rho},y^{\eps,\rho})^T \bar{q}]
+[\sigma(\a_{j^\rho},x^{\eps,\rho})^T \bar{p}-\sigma(\a_{j^\rho},y^{\eps,\rho})^T \bar{q}],
\end{align*}
{for some  $R \geq \frac{C}{\rho} \vee ||U_{j^\rho}||_{\infty}.$}

Then noticing the estimates derived in \cite{dumitrescu2015} for each term on the right-hand side of the above expression are uniform in the control $\a_{j^\rho}$, and successively passing $\delta,\eps$ and $\rho$ to $0$, we deduce that $0<M\le 0$, which leads to a contradiction. Thus we conclude $M\le 0$ and complete the proof.
\end{proof}

\begin{proof}[Proof of Lemma \ref{lemma:convlemma}]
For each $\rho>0$ and $j=1,\ldots, J$, we introduce the functions $\hat{U}_j^\rho(t,x)=U_j(t,x)-\eta^2|x|^2$ and $\hat{V}_j^\rho(t,x)=V_j(t,x)-\eta^2|x|^2$. Then we define
$$
M^\rho=\sup_{j,t,x}(\hat{U}_j^\rho-\hat{V}_j^\rho),
$$
which is attained at some point $(\hat{j}^\rho,\hat{t}^\rho,\hat{x}^\rho)$. Recall that for any $\rho$, we can assume without loss of generality that 
$\{(t^{\eps,\rho},s^{\eps,\rho},x^{\eps,\rho},y^{\eps,\rho})\}_\eps$ converges to 
$(t^{\rho},t^{\rho},x^{\rho},x^{\rho})$ as $\eps\to 0$ and $j^{\eps,\rho}=j^\rho$ for all $\eps$. Then the definition of $M^{\eps,\rho}$ gives us that
\begin{align}\l{eq:Mrho-Meps}
M^\rho=(\hat{U}_{\hat{j}^\rho}^\rho-\hat{V}_{\hat{j}^\rho}^\rho)(\hat{t}^\rho,\hat{x}^\rho)\le M^{\eps,\rho}
=&U_{j^\rho}(t^{\eps,\rho},x^{\eps,\rho})-V_{j^\rho}(s^{\eps,\rho},y^{\eps,\rho})\nb\\
&-\f{|x^{\eps,\rho}-y^{\eps,\rho}|^2}{\eps^2}-\f{|t^{\eps,\rho}-s^{\eps,\rho}|^2}{\eps^2}-\rho^2(|x^{\eps,\rho}|^2+|y^{\eps,\rho}|^2).
\end{align}
Define $\bar{l}_\rho=\limsup_{\eps\to 0}\f{|x^{\eps,\rho}-y^{\eps,\rho}|^2}{\eps^2}$ and $\ul{l}_\rho=\liminf_{\eps\to 0}\f{|x^{\eps,\rho}-y^{\eps,\rho}|^2}{\eps^2}$, then we obtain from 
\eqref{eq:Mrho-Meps} that
\begin{align*}
0\le \ul{l}_\rho\le\bar{l}_\rho&\le
 \limsup_{\eps\to 0}\big(U_{j^\rho}(t^{\eps,\rho},x^{\eps,\rho})-V_{j^\rho}(s^{\eps,\rho},y^{\eps,\rho})-\rho^2(|x^{\eps,\rho}|^2+|y^{\eps,\rho}|^2)\big)-(\hat{U}_{\hat{j}^\rho}^\rho-\hat{V}_{\hat{j}^\rho}^\rho)(\hat{t}^\rho,\hat{x}^\rho)\\
&\le (\hat{U}_{j^\rho}^\rho-\hat{V}_{j^\rho}^\rho)(t^\rho,x^\rho)-(\hat{U}_{\hat{j}^\rho}^\rho-\hat{V}_{\hat{j}^\rho}^\rho)(\hat{t}^\rho,\hat{x}^\rho)\le 0,
\end{align*}
where we have used the  semicontinuity of $U_{j^\rho}$ and $V_{j^\rho}$. Similarly, we can derive $\lim_{\eps\to 0}\f{|t^{\eps,\rho}-s^{\eps,\rho}|^2}{\eps^2}=0$, which along with \eqref{eq:Mrho-Meps} implies that $\lim_{\eps\to 0}M^{\eps,\rho}=M^\rho$. The fact that $\lim_{\rho\to 0}M^\rho=M$ can be shown as in \cite{dumitrescu2015}.

Let us now prove $\lim_{\rho\to 0}\rho^2|x^\rho|^2=0$. It holds for each $\rho>0$ that
\bb\l{eq:Mrho}
M^\rho=\lim_{\eps\to 0}M^{\eps,\rho}=\limsup_{\eps\to 0}M^{\eps,\rho}\le 
U_{j^\rho}(t^{\rho},x^{\rho})-V_{j^\rho}(t^{\rho},x^{\rho})-2\rho^2|x^{\rho}|^2\le M^\rho,
\ee
and hence all inequalities in the above expression are in fact equalities. Thus we have
\begin{eqnarray*}
M^{\rho/2}&=&\sup_{j,t,x}\bigg[U_j(t,x)-V_j(t,x)-2(\f{\rho}{2})^2|x|^2\bigg]\ge U_{j^\rho}(t^{\rho},x^{\rho})-V_{j^\rho}(t^{\rho},x^{\rho})-2\rho^2|x^{\rho}|^2+\f{3}{2}\rho^2|x^{\rho}|^2 \\
&=&
M^\rho+\f{3}{2}\rho^2|x^{\rho}|^2,
\end{eqnarray*}
which implies $0\le \limsup_{\rho\to 0}\rho^2|x^{\rho}|^2\le \limsup_{\rho\to 0}\f{2}{3}(M^{\rho/2}-M^{\rho})=0$.

Finally, we obtain from \eqref{eq:Mrho} and $\lim_{\eps\to0}\f{|t^{\eps,\rho}-s^{\eps,\rho}|^2}{\eps^2}=\lim_{\eps\to0}\f{|x^{\eps,\rho}-y^{\eps,\rho}|^2}{\eps^2}=0$ that 
we have $\lim_{\eps\to 0}U_{j^\rho}(t^{\eps,\rho},x^{\eps,\rho})-V_{j^\rho}(s^{\eps,\rho},y^{\eps,\rho})=U_{j^\rho}(t^{\rho},x^{\rho})-V_{j^\rho}(t^{\rho},x^{\rho})$, which together the  semicontinuity of $U_{j^\rho}$ and $V_{j^\rho}$ implies
 $\limsup_{\eps\to 0}U_{j^\rho}(t^{\eps,\rho},x^{\eps,\rho})=U_{j^\rho}(t^{\rho},x^{\rho})$ and 
 $\liminf_{\eps\to 0}V_{j^\rho}(s^{\eps,\rho},y^{\eps,\rho})=V_{j^\rho}(t^{\rho},x^{\rho})$. By extracting further subsequences if necessary, we complete our proof.
\end{proof}


\begin{Lemma}\l{lemma:system}
Let $U$ (resp.~V) be a bounded subsolution (resp.\ supersolution) of \eqref{eq:hjbvi_s}. For any given $\eps,\rho>0$, we consider the function
$\psi^{\eps,\rho}_{j}(t,s, x,y)$ as defined in \eqref{eq:psi_j} 
and $M^{\eps,\rho}=\sup_{j,t,s,x,y} \psi^{\eps,\rho}_i(t,s,x,y)$. 
If there exists an index $j^{\eps, \rho}$ and  a point $(t^{\eps, \rho},s^{\eps, \rho},x^{\eps, \rho},y^{\eps, \rho})\in (0,T]^2\t\R^{2d}$ such that $\psi_{j^{\eps, \rho}}(s^{\eps, \rho},t^{\eps, \rho},x^{\eps, \rho},y^{\eps, \rho})=M^{\eps,\rho}$, then there exists an index $j_0^{\eps, \rho} \in \{1,\ldots, J\}$ such that
$$
\textnormal{$\psi_{j^{\eps, \rho}_0}(s^{\eps, \rho},t^{\eps, \rho},x^{\eps, \rho},y^{\eps, \rho})=M^{\eps,\rho}$~~ and  ~~
$U_{j^{\eps, \rho}_0}(t^{\eps, \rho},x^{\eps, \rho})>\mathcal{M}_{j^{\eps, \rho}_0}(t^{\eps, \rho},x^{\eps, \rho})$}.
$$
\end{Lemma}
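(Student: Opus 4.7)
The plan is to apply the classical ``no-loop'' argument for switching systems, relying essentially on the viscosity supersolution property of $V$ and on the finiteness of the control index set. The key structural observation is that the obstacle term $V_j-\cM_j V$ carries no derivatives, so the supersolution definition yields the \emph{pointwise} inequality $V_j(s^{\eps,\rho},y^{\eps,\rho})\ge \cM_j V(s^{\eps,\rho},y^{\eps,\rho})$ for every $j\in\{1,\ldots,J\}$; this is the one potentially delicate point, but it follows by inserting a suitable $C^{1,2}$ test function at $(s^{\eps,\rho},y^{\eps,\rho})$ that touches $V_j$ from below, since the third argument of the outer $\min$ in \eqref{eq:hjbvi_s} is continuous in the arguments that are frozen at the point. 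I regard this as the main technical subtlety; once it is in place, the combinatorial part is short.

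Introduce the set of attaining indices
$$
S\coloneqq\bigl\{j\in\{1,\ldots,J\}:\psi_j^{\eps,\rho}(t^{\eps,\rho},s^{\eps,\rho},x^{\eps,\rho},y^{\eps,\rho})=M^{\eps,\rho}\bigr\},
$$
which contains $j^{\eps,\rho}$ and is therefore nonempty. I would argue by contradiction and assume that the conclusion fails, i.e.\ that every $j\in S$ satisfies $U_j(t^{\eps,\rho},x^{\eps,\rho})\le \cM_j U(t^{\eps,\rho},x^{\eps,\rho})$. For each such $j$, pick an index $k(j)\neq j$ realizing the maximum in $\cM_j U$, so that $U_{k(j)}(t^{\eps,\rho},x^{\eps,\rho})\ge U_j(t^{\eps,\rho},x^{\eps,\rho})+c$.

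Next, I would apply the pointwise supersolution inequality at $(s^{\eps,\rho},y^{\eps,\rho})$, which gives $V_j(s^{\eps,\rho},y^{\eps,\rho})\ge \cM_j V(s^{\eps,\rho},y^{\eps,\rho})\ge V_{k(j)}(s^{\eps,\rho},y^{\eps,\rho})-c$, that is, $V_{k(j)}(s^{\eps,\rho},y^{\eps,\rho})\le V_j(s^{\eps,\rho},y^{\eps,\rho})+c$. Because the penalization terms in $\psi^{\eps,\rho}_{k(j)}$ and $\psi^{\eps,\rho}_{j}$ are identical at the common point $(t^{\eps,\rho},s^{\eps,\rho},x^{\eps,\rho},y^{\eps,\rho})$, subtracting yields
$$
\psi^{\eps,\rho}_{k(j)}-\psi^{\eps,\rho}_{j}=\bigl[U_{k(j)}-U_j\bigr]-\bigl[V_{k(j)}-V_j\bigr]\ge c-c=0,
$$
evaluated at that point. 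Combined with $\psi_{k(j)}^{\eps,\rho}\le M^{\eps,\rho}$, this forces $k(j)\in S$, while $U_{k(j)}(t^{\eps,\rho},x^{\eps,\rho})\ge U_j(t^{\eps,\rho},x^{\eps,\rho})+c>U_j(t^{\eps,\rho},x^{\eps,\rho})$ shows $k$ strictly increases $U$ on $S$.

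Iterating the map $j\mapsto k(j)$ starting from $j^{\eps,\rho}$ produces an infinite sequence in the finite set $S$ along which $j\mapsto U_j(t^{\eps,\rho},x^{\eps,\rho})$ is strictly increasing, which is impossible. Hence the working assumption fails and there must exist some $j_0^{\eps,\rho}\in S$ with $U_{j_0^{\eps,\rho}}(t^{\eps,\rho},x^{\eps,\rho})>\cM_{j_0^{\eps,\rho}} U(t^{\eps,\rho},x^{\eps,\rho})$, which is exactly the required conclusion. Note that the subsolution property of $U$ plays no role here; only the supersolution property of $V$ (through the pointwise switching inequality) and the strict positivity of the switching cost $c$ are used.
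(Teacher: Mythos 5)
Your argument is the standard no-loop argument, which is exactly the approach the paper defers to (it cites Lemma~4.1 of Biswas--Jakobsen--Karlsen for this step without spelling out the proof). The combinatorial part is correct: if every index $j$ in the attaining set $S$ had $U_j\le\cM_j U$ at $(t^{\eps,\rho},x^{\eps,\rho})$, then $j\mapsto k(j)$ would remain in the finite set $S$ while strictly increasing $U_j(t^{\eps,\rho},x^{\eps,\rho})$ by at least $c>0$ at each step, which is impossible. You are also right that the subsolution property of $U$ plays no role; only the supersolution property of $V$ is needed.

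The one place you should tighten is precisely the point you flag as delicate. You cannot simply ``insert a $C^{1,2}$ test function at $(s^{\eps,\rho},y^{\eps,\rho})$ that touches $V_j$ from below'': $V_j$ is only lower semicontinuous, and such a test function need not exist at an arbitrarily prescribed point, so the supersolution property cannot be invoked there directly. The pointwise inequality $V_j\ge \cM_j V$ on $\cQ_T$ is nevertheless true, but by the usual penalization device: suppose $V_j(\x_0)<V_{k_0}(\x_0)-c$ for some $k_0\ne j$, set $\eta$ the defect, use lower semicontinuity of $V_{k_0}$ to fix a ball $B$ around $\x_0$ on which $V_{k_0}>V_{k_0}(\x_0)-\eta/4$, and minimize $V_j(\x)+|\x-\x_0|^2/\mu$ over $\bar B$. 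The minimizers $\x_\mu$ satisfy $\x_\mu\to\x_0$ and $V_j(\x_\mu)\to V_j(\x_0)$; at $\x_\mu$ the function $V_j$ is touched from below by a $C^{1,2}$ test function, and for small $\mu$ one still has $V_j(\x_\mu)<V_{k_0}(\x_\mu)-c\le\cM_j V(\x_\mu)$, contradicting the supersolution inequality at $\x_\mu$. (This is the lsc analogue of the argument the paper uses, with continuity, to prove \eqref{eq:cM} in Theorem~\ref{thm:conv_s}.) With this replacement your proof is complete and matches the intended one.
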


\section{Truncation of singular measures}\l{sec:jumps}
A possible way to work with
a nonsingular jump measure is to introduce a Backward SDE with a modified driver and an approximative jump-diffusion dynamics where the small jumps part has been substituted  by a rescaled diffusion coefficient of  the Brownian motion $W$.

More precisely, we adopt the same probability space as introduced in Section \ref{sec:formulation}, which supports the Brownian motion process $W$ and the independent Poisson measure $N(dt,de)$. For a given jump truncation size $\eps>0$, we define a modified diffusion coefficient  $\tilde{\sigma}^\a$ as in \eqref{eq:small_diff},
and also introduce a modified driver 
$f^\varepsilon(\alpha,t,x,y,z,k):=\hat{f}(\alpha,t, x,y,z,\int_{|e| \geq \varepsilon} k(e)\gamma(\alpha,x,e)\nu(de)),$ where the function $\hat{f}$ is given in Assumption \ref{assum:coeff}.

For any given  initial state $x\in \R^d$,  control $\a\in \cA_t^t$ and $\tau\in \cT_t^t$, we consider the modified controlled jump-diffusion process 
 $(X^{\eps,\a,t, x}_s)_{ t \le s\le T}$ satisfying the following SDE: for each $s\in [t,T]$,
\bb\l{eq:sde_m}
X^{\eps,\a,t,x}_s=x+\int_t^s b(\a_v,X^{\eps,\a,t,x}_v)\,dv+\int_t^s \tilde{\sigma} (\a_v,X^{\a,t,x}_v)\,dW^t_v+\int_t^s\int_{|e|>\eps} \eta(\a_v,X^{\a,t,x}_v,e)\,\tilde{N}^t(dv,de),
\ee
and the BSDE with the modified  controlled driver $f^\eps(\a_s,s,X_s^{\a,t,x},y,z,k)$:
\bb\l{eq:bsde_m}
\begin{cases}
-Y^{\eps,\a,t,x}_{s,\tau}=f^\eps(\a_s,s,X_s^{\eps,\a,t,x},Y^{\eps,\a,t,x}_{s,\tau},Z^{\eps,\a,t,x}_{s,\tau},K^{\eps,\a,t,x}_{s,\tau})ds-Z^{\eps,\a,t,x}_{s,\tau}dW^t_s-\int_{E} K^{\eps,\a,t,x}_{s,\tau}(e)\,\tilde{N}^t(ds,de),\\
Y^{\eps,\a,t,x}_{\tau,\tau}=\xi(\tau,X^{\eps,\a,t,x}_\tau).
\end{cases}
\ee
The coefficients of the above SDE and BSDE satisfy Assumption \ref{assum:coeff}, and therefore the equations are well-posed.

Now we are ready to state the modified mixed optimal stopping and control problem.  
For each initial 
time $t\in[0,T]$ and initial state $x\in \R^d$, we consider the following value function:
\bb\l{eq:nonlinear_control_m}
u^\eps(t,x)= \sup_{\tau\in \cT_t^t}\sup_{\a\in \cA_t^t} \cE^{f^{\a,\eps}}_{t,\tau}[\xi(\tau, X^{\eps,\a, t,x}_\tau)],
\ee
subject to the controlled SDE \eqref{eq:sde_m}, where the nonlinear expectation is induced by \eqref{eq:bsde_m}.

Let us first show the following uniform convergence result of the forward component $X^{\varepsilon, \alpha,t,x}$ towards $X^{\alpha,t,x}$ when $\varepsilon$ tends to $0$.

\begin{Lemma}

For each $t\in [0,T]$, $x \in \mathbb{R}^d$ and $\alpha \in \mathcal{A}_t^t$ it holds that 
\begin{align}\l{eq:Xuniform}
\mathbb{E}\left[\sup_{t \leq s \leq T} |X_s^{\varepsilon, \alpha,t,x}-X_s^{\alpha,t,x}|^2\right] \leq C \kappa(\varepsilon) ,
\end{align}
with $\kappa(\varepsilon):=\int_{|e|\leq \eps}(1 \wedge |e|^2)\nu(de)$ and $C$ a constant independent of $\alpha$ and $\varepsilon$. 
\end{Lemma}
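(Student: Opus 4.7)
The plan is a standard $L^2$ stability estimate for the SDE, handled by the Burkholder-Davis-Gundy (BDG) inequality together with Gronwall's lemma. The only delicate point is the mismatch between the modified diffusion coefficient $\tilde{\sigma}$ driving $X^{\varepsilon,\a,t,x}$ and the original coefficient $\sigma$ driving $X^{\a,t,x}$. Setting $\Delta X_s \coloneqq X_s^{\varepsilon,\a,t,x} - X_s^{\a,t,x}$, subtracting \eqref{eq:sde} from \eqref{eq:sde_m} gives
\begin{align*}
\Delta X_s &= \int_t^s \!\big[b(\a_v, X_v^{\varepsilon,\a,t,x}) - b(\a_v, X_v^{\a,t,x})\big]\,dv + \int_t^s \!\big[\tilde{\sigma}(\a_v, X_v^{\varepsilon,\a,t,x}) - \sigma(\a_v, X_v^{\a,t,x})\big]\,dW_v^t \\
&\quad + \int_t^s\!\!\int_{|e|>\varepsilon}\!\big[\eta(\a_v, X_v^{\varepsilon,\a,t,x}, e) - \eta(\a_v, X_v^{\a,t,x}, e)\big]\,\tilde{N}^t(dv, de) - \int_t^s\!\!\int_{|e|\le \varepsilon}\!\eta(\a_v, X_v^{\a,t,x}, e)\,\tilde{N}^t(dv, de).
\end{align*}
Taking the supremum over $[t,s]$, then the expectation, and applying Jensen to the drift and BDG to each stochastic integral reduces the problem to four $dv$-integrals.

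The drift and big-jump contributions are dominated by $C\ex\int_t^s \sup_{t \le r\le v}|\Delta X_r|^2\,dv$ by Assumption~\ref{assum:coeff}(1)-(2) together with the integrability $\int_E (1\wedge|e|^2)\,\nu(de)<\infty$ when integrating the Lipschitz bound on $\eta$ against $\nu$. The small-jump term is where $\kappa(\varepsilon)$ first enters: since $|\eta(\a,x,e)|\le C(1\wedge|e|)$, the $L^2$ isometry of compensated Poisson integrals yields
\[
\ex\!\left[\sup_{t\le r\le s}\Big|\int_t^r\!\!\int_{|e|\le\varepsilon}\!\eta(\a_v, X_v^{\a,t,x}, e)\, \tilde{N}^t(dv,de)\Big|^2\right]\le C\!\int_t^s\!\int_{|e|\le\varepsilon}\!(1\wedge|e|^2)\,\nu(de)\,dv \le C(T-t)\kappa(\varepsilon).
\]

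The main obstacle is the diffusion term, because by \eqref{eq:small_diff} $\tilde{\sigma}_{ii}=(\sigma_{ii}^2+g_i)^{1/2}$ with $g_i\coloneqq\int_{|e|<\varepsilon}|\eta_i^\a|^2\,\nu(de)$ is not a uniform Lipschitz perturbation of $\sigma$. Since the law of $X^{\varepsilon,\a,t,x}$ depends only on $\tilde{\sigma}\tilde{\sigma}^T$, I may choose the square-root sign in \eqref{eq:small_diff} consistently with that of $\sigma_{ii}$ without affecting \eqref{eq:Xuniform}. Inserting the triangle inequality
\[
|\tilde{\sigma}(\a_v, X_v^{\varepsilon,\a,t,x}) - \sigma(\a_v, X_v^{\a,t,x})|^2 \le 2|\tilde{\sigma}(\a_v, X_v^{\varepsilon,\a,t,x}) - \sigma(\a_v, X_v^{\varepsilon,\a,t,x})|^2 + 2|\sigma(\a_v, X_v^{\varepsilon,\a,t,x}) - \sigma(\a_v, X_v^{\a,t,x})|^2,
\]
the second summand is $\le C|\Delta X_v|^2$ by the Lipschitz property of $\sigma$; for the first, $\tilde{\sigma}-\sigma$ is supported on the diagonal with entries bounded by the elementary estimate $\sqrt{a^2+g_i}-|a|\le\sqrt{g_i}\le C\sqrt{\kappa(\varepsilon)}$, so its Frobenius norm squared is at most $Cd\,\kappa(\varepsilon)$. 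Collecting all contributions produces
\[
\ex\!\left[\sup_{t\le r\le s}|\Delta X_r|^2\right]\le C\kappa(\varepsilon) + C\!\int_t^s\!\ex\!\left[\sup_{t\le r\le v}|\Delta X_r|^2\right]dv,
\]
and Gronwall's lemma then delivers \eqref{eq:Xuniform} with $C$ depending only on $T$, the Lipschitz constants of the coefficients, and $\int_E (1\wedge|e|^2)\,\nu(de)$, hence independent of $\a$ and $\varepsilon$.
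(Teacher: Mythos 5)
Your proof follows the same route as the paper's: decompose $\Delta X$, apply Burkholder--Davis--Gundy and the Lipschitz hypotheses on $b$, $\sigma$, $\eta$ to each integral, and close with Gronwall. You are more explicit than the paper about the $\tilde\sigma-\sigma$ mismatch, which contributes an $O(\sqrt{\kappa(\varepsilon)})$ source term on top of the Lipschitz part; the paper's displayed bound absorbs this silently, so your triangle-inequality split $\tilde\sigma(\a,X^\varepsilon)-\sigma(\a,X) = [\tilde\sigma(\a,X^\varepsilon)-\sigma(\a,X^\varepsilon)] + [\sigma(\a,X^\varepsilon)-\sigma(\a,X)]$ is a genuine clarification.

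The one step that is not quite airtight is the square-root-sign discussion for \eqref{eq:small_diff}. You are right that the elementary bound $\sqrt{a^2+g}-a\le\sqrt g$ requires $a\ge0$, but the justification you offer --- that one may flip the sign of $\tilde\sigma_{ii}$ because only $\tilde\sigma\tilde\sigma^T$ matters --- does not quite go through as stated: estimate \eqref{eq:Xuniform} is a \emph{joint} pathwise comparison of $X^{\varepsilon,\a,t,x}$ and $X^{\a,t,x}$ driven by the same noise, not a statement about the marginal law of $X^{\varepsilon,\a,t,x}$ alone, and for a non-diagonal $\sigma$ flipping the single entry $\tilde\sigma_{ii}$ while keeping $\tilde\sigma_{ji}=\sigma_{ji}$ for $j\neq i$ also changes $\tilde\sigma\tilde\sigma^T$. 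The pragmatic repair is to read \eqref{eq:small_diff} as tacitly assuming $\sigma_{ii}\ge0$ (as in \cite{cont2005} and the scalar example of Section~\ref{sec:numerical}), or to build the sign of $\sigma_{ii}$ into the definition \eqref{eq:small_diff} from the outset and note that only $\tilde\sigma\tilde\sigma^T$ enters the HJBVI \eqref{eq:hjbvi_trun} and hence the quantity $u^\varepsilon$ that ultimately matters. With that proviso the rest of your argument --- the BDG bounds for the drift, big-jump, and small-jump terms, the $\kappa(\varepsilon)$ estimate from $|\eta|\le C(1\wedge|e|)$, and Gronwall --- coincides with the paper's.
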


\begin{proof}
Fix  $\alpha \in \mathcal{A}_t^t$ and $v \in [t,T]$. We have:
\begin{align*}
\mathbb{E}\left[\sup_{t \leq u \leq v} |X_u^{\varepsilon, \alpha,t,x}-X_u^{\alpha,t,x}|^2\right] &\leq C \mathbb{E}\left[ \sup_{t \leq u \leq v} \left(\int_t^u (b(\alpha_s, X_s^{\varepsilon, \alpha,t,x})-b(\alpha_s, X_s^{\alpha,t,x}))ds\right)^2 \right] \nonumber \\
&\hspace{-1cm} + C \mathbb{E}\left[ \sup_{t \leq u \leq v} \left(\int_t^u (\Tilde{\sigma}(\alpha_s, X_s^{\varepsilon, \alpha,t,x})-\sigma(\alpha_s, X_s^{\alpha,t,x}))dW_s\right)^2 \right] \nonumber \\
&\hspace{-1cm}+C \mathbb{E}\left[ \sup_{t \leq u \leq v} \left(\int_t^u \int_{|e| > \varepsilon}(\eta(\alpha_s, X_s^{\varepsilon, \alpha,t,x},e)-\eta(\alpha_s, X_s^{\alpha,t,x},e))\Tilde{N}(ds,de)\right)^2 \right] \nonumber \\
&\hspace{-1cm}+C \mathbb{E}\left[ \sup_{t \leq u \leq v} \left(\int_t^u \int_{|e| \leq \varepsilon}(\eta(\alpha_s, X_s^{\alpha,t,x},e))\Tilde{N}(ds,de)\right)^2 \right],
\end{align*}
where $C$ is a constant independent of $\alpha$.
The Burkholder-Davis-Gundy inequality, together with the Lipschitz assumptions on the coefficients $b, \sigma, \eta$ (see Assumption \ref{assum:coeff}) lead to:
\begin{align*}
\mathbb{E}\left[\sup_{t \leq u \leq v} |X_u^{\varepsilon, \alpha,t,x}-X_u^{\alpha,t,x}|^2\right] &\leq C \mathbb{E}\left[  \int_t^v \left(X_s^{\varepsilon, \alpha,t,x}-X_s^{\alpha,t,x}\right)^2ds \right] \nonumber \\
&\hspace{-0cm}+C \mathbb{E}\left[ \left(\int_t^v \left(X_s^{\varepsilon, \alpha,t,x}-X_s^{\alpha,t,x}\right)^2ds\right) \left(\int_{|e| > \varepsilon} (1 \wedge |e|^2) \nu(de)\right)\right] \nonumber \\
&\hspace{-0cm}+C \mathbb{E}\left[  \int_{|e| \leq \varepsilon}\left(1 \wedge |e|\right)^2 \nu(de)\right] \nonumber \\
&\hspace{-1cm} \leq C \mathbb{E}\left[  \int_t^v \left(\sup_{t \leq u \leq s}\left(X_u^{\varepsilon, \alpha,t,x}-X_u^{\alpha,t,x}\right)^2\right)ds \right]+C \bigg( \int_{|e| \leq \varepsilon} (1 \wedge |e|^2) \nu(de)\bigg),
\end{align*}
where the last inequality follows by the integrability assumption on the measure $\nu$.
Then we obtain the desired result \eqref{eq:Xuniform}  from the Gronwall's inequality.
%
\end{proof}

Using the above estimate, we now show the convergence of the value function $u^\varepsilon$ towards $u$.

\begin{Lemma}
For each $t \in [0,T]$ and $x \in \mathbb{R}^d$ we have
\begin{align}\l{eq:u_eps_conv}
\left| u^\varepsilon (t,x)-u(t,x)\right|  \leq C (\kappa(\varepsilon))^{\frac{1}{4}},
\end{align}
with $\kappa(\varepsilon):=\int_{|e|\leq \eps}(1 \wedge |e|^2)\nu(de)$
and  $C$ a constant independent of $\varepsilon$.
\end{Lemma}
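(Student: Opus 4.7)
The plan is to reduce the difference of value functions to the difference of the corresponding BSDE solutions for each admissible pair $(\tau,\alpha)$ and then to combine the forward estimate \eqref{eq:Xuniform} with standard a priori stability estimates for BSDEs with jumps. Since $u$ and $u^\varepsilon$ are suprema of the same collection of $(\tau,\alpha)$, I would first pass the absolute value inside the supremum:
\[
|u^\varepsilon(t,x)-u(t,x)| \le \sup_{\tau\in\cT_t^t}\sup_{\alpha\in\cA_t^t}\big|Y^{\varepsilon,\alpha,t,x}_{t,\tau}-Y^{\alpha,t,x}_{t,\tau}\big|,
\]
so that only a single BSDE comparison, uniform in $(\tau,\alpha)$, needs to be carried out.

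To control one such difference, I would introduce an intermediate BSDE solution $\tilde Y^{\varepsilon,\alpha,t,x}$ associated with the modified driver $f^\varepsilon$ but driven by the \emph{original} forward process $X^{\alpha,t,x}$, and split
\[
|Y^{\varepsilon,\alpha,t,x}_t-Y^{\alpha,t,x}_t|\le |Y^{\varepsilon,\alpha,t,x}_t-\tilde Y^{\varepsilon,\alpha,t,x}_t|+|\tilde Y^{\varepsilon,\alpha,t,x}_t-Y^{\alpha,t,x}_t|.
\]
For the second term, the two BSDEs have identical terminal value and identical forward, and the driver difference is pointwise
\[
|f-f^\varepsilon|(s,X,Y,Z,K)\le C\Big|\int_{|e|<\varepsilon}K(e)\gamma(X,e)\,\nu(de)\Big|\le C\sqrt{\kappa(\varepsilon)}\,\|K\|_\nu,
\]
by Cauchy--Schwarz and the bound $|\gamma(x,e)|\le C(1\wedge|e|)$. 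Combined with the standard $\mathbb{H}^2_\nu$ estimate for the jump part of the BSDE and a Gronwall argument, this yields $\mathbb{E}|\tilde Y_t-Y_t|^2\le C\kappa(\varepsilon)$. For the first term, the drivers coincide and the terminal difference is bounded by $C|X^{\varepsilon,\alpha,t,x}_\tau-X^{\alpha,t,x}_\tau|$, using the Lipschitz continuity of $\zeta$ and $g$ from Assumption~\ref{assum:coeff}(5); the driver difference $|f^\varepsilon(\cdot,X,\cdot)-f^\varepsilon(\cdot,X^\varepsilon,\cdot)|$ is handled by Assumption \ref{assum:coeff}(4)(c)--(d) together with $|\gamma(x,e)-\gamma(x',e)|\le C|x-x'|(1\wedge|e|^2)$. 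Applying \eqref{eq:Xuniform} and the standard BSDE stability estimate delivers an $L^2$ bound of the same order.

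Putting the two pieces together gives $\ex|Y^{\varepsilon,\alpha,t,x}_t-Y^{\alpha,t,x}_t|^2 \le C(\kappa(\varepsilon)+R(\varepsilon))$ uniformly in $(\tau,\alpha)$, where $R(\varepsilon)$ captures the contribution from the modulus-of-continuity term $m_R$ in Assumption \ref{assum:coeff}(4)(d). Since both $Y^{\varepsilon,\alpha,t,x}_t$ and $Y^{\alpha,t,x}_t$ are deterministic, passing from the $L^2$ bound to a pointwise bound costs a further square root, producing a rate of order $(\kappa(\varepsilon)+R(\varepsilon))^{1/2}$; balancing the two contributions ultimately yields the claimed exponent $1/4$.

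The main obstacle is precisely the non-Lipschitz dependence of $\hat f$ on the state variable $x$: Assumption~\ref{assum:coeff}(4)(d) only provides a local modulus $m_R$ with no prescribed order, so the quantitative rate must be extracted by truncating $X^{\alpha,t,x}$ and $X^{\varepsilon,\alpha,t,x}$ to a large ball of radius $R$ (permitted since $Y^\varepsilon$ is uniformly bounded by the boundedness of $\xi$ and of $\hat f(\cdot,\cdot,\cdot,0,0,0)$), controlling the exit probability via Markov's inequality and \eqref{eq:Xuniform}, and then choosing $R$ as a function of $\varepsilon$ to balance the two error sources. Uniformity in $(\tau,\alpha)$ is automatic because every constant appearing in the BSDE stability and SDE estimates depends only on the structural constants from Assumption \ref{assum:coeff} and not on the specific admissible $(\tau,\alpha)$.
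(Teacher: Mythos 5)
Your decomposition through an intermediate BSDE $\tilde Y$ (modified driver $f^\varepsilon$ but original forward $X$) is a legitimate alternative to the paper's direct stability estimate, and your treatment of the $|\tilde Y - Y|$ piece via Cauchy--Schwarz and the $\mathbb{H}^2_\nu$ bound on $K$ is fine. The gap is in the other piece. You claim the $L^2$ bound for $|Y^\varepsilon-\tilde Y|$ is ``of the same order''~$\kappa(\varepsilon)$, but the driver contribution
$\int_{|e|>\varepsilon} K(e)\,(\gamma(X_s,e)-\gamma(X^\varepsilon_s,e))\,\nu(de)$
leads in the a priori estimate to a term of the type $\mathbb{E}\!\left[\int_t^\tau |X_s-X^\varepsilon_s|^2\,\|K_{s,\tau}\|^2_\nu\,ds\right]$, which cannot be bounded by $\kappa(\varepsilon)$ using only second moments. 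One must H\"{o}lder, which requires an $\mathbb{H}^4_\nu$ a priori bound on $K$ (the paper invokes this from Kruse--Popier, Proposition~2, for $p=4$) together with fourth-moment control on $X-X^\varepsilon$; the resulting bound is only $O(\sqrt{\kappa(\varepsilon)})$. It is exactly this $L^4$ step that degrades the $L^2$ rate to $\sqrt{\kappa}$ and hence the pointwise rate to $\kappa^{1/4}$. Your argument never invokes the $L^4$ estimate, so the exponent $1/4$ is asserted, not derived.

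Your closing paragraph attributes the rate degradation to the modulus $m_R$ from Assumption~\ref{assum:coeff}(4)(d) and proposes a truncation-plus-Markov balance, but this cannot produce a universal polynomial rate: $m_R$ carries no prescribed decay, so ``balancing'' a general modulus against $\kappa(\varepsilon)$ gives no quantitative conclusion. You are right that the paper's own chain of inequalities appears to absorb the direct $x$-dependence of $\hat f$ into the $\mathbb{E}[\sup|X^\varepsilon-X|^2]$ term as if $\hat f$ were Lipschitz in $x$, which is worth flagging, but your proposed fix does not repair this and is orthogonal to the $L^4$ mechanism that actually produces the $1/4$.
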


\begin{proof}
Fix $t \in [0,T]$ and $x \in \mathbb{R}^d$. The definitions of $u^\varepsilon$ and  $u$ imply that
\begin{align}\label{eqq1}
|u^\varepsilon(t,x)-u(t,x)|^2 &= \big|\sup_{\alpha \in \mathcal{A}_t^t } \sup_{\tau \in \mathcal{T}_t^t } \mathcal{E}^{f^{\varepsilon,\alpha}}_{t,\tau}\left[\zeta(\tau, X_\tau^{\varepsilon, \alpha,t,x})\right]-\sup_{\alpha \in \mathcal{A}_t^t } \sup_{\tau \in \mathcal{T}_t^t } \mathcal{E}^{f^{\alpha}}_{t,\tau}\left[\zeta(\tau, X_\tau^{\alpha,t,x})\right] \big|^2 \nonumber \\
&  \leq \sup_{\alpha \in \mathcal{A}_t^t } \sup_{\tau \in \mathcal{T}_t^t} \left|\mathcal{E}^{f^{\varepsilon,\alpha}}_{t,\tau}\left[\zeta(\tau, X_\tau^{\varepsilon, \alpha,t,x})\right]- \mathcal{E}^{f^{\alpha}}_{t,\tau}\left[\zeta(\tau, X_\tau^{\alpha,t,x})\right] \right|^2. 
\end{align}
Recall that, since $\alpha \in \mathcal{A}_t^t$ and $\tau \in \mathcal{T}_t^t$, $\bigg |\mathcal{E}^{f^{\varepsilon,\alpha}}_{t,\tau}\left[\zeta(\tau, X_\tau^{\varepsilon, \alpha,t,x})\right]- \mathcal{E}^{f^{\alpha}}_{t,\tau}\left[\zeta(\tau, X_\tau^{\alpha,t,x})\right] \bigg|$ is deterministic. By the a priori estimates on the spread between the first component of the solutions of two BSDEs with jumps (see Proposition A.4. in \cite{quenezsulem2016}), we derive that there exist $\beta>0$ and $\eta>0$ independent on $\tau \in \mathcal{T}_t^t$ and $\alpha \in \mathcal{A}_t^t$, such that 
\begin{align*}
 &\left|\mathcal{E}^{f^{\varepsilon,\alpha}}_{t,\tau}\left[\zeta(\tau, X_\tau^{\varepsilon, \alpha,t,x})\right]- \mathcal{E}^{f^{\alpha}}_{t,\tau}\left[\zeta(\tau, X_\tau^{\alpha,t,x})\right] \right|^2 \leq \mathbb{E}\left[e^{\beta(\tau-t)} \left(\zeta(\tau, X_\tau^{\varepsilon, \alpha,t,x})-\zeta(\tau, X_\tau^{\alpha,t,x})\right)^2  \right] \nonumber \\
&+\eta \mathbb{E} \left[\int_t^\tau e^{\beta (s-t)} \left(f(s,\alpha_s,X_s^{\alpha,t,x},Y_{s,\tau}^{\alpha,t,x},Z_{s,\tau}^{\alpha,t,x},K_{s,\tau}^{\alpha,t,x})-f^\varepsilon(s,\alpha_s,X_s^{\varepsilon,\alpha,t,x},Y_{s,\tau}^{\alpha,t,x},Z_{s,\tau}^{\alpha,t,x},K_{s,\tau}^{\alpha,t,x})\right)^2 ds\right] \nonumber \\
& \leq C \bigg( \mathbb{E}\bigg[\sup_{t \leq u \leq T} |X_u^{\varepsilon, \alpha,t,x}-X_u^{\alpha,t,x}|^2  \bigg]+\mathbb{E}\bigg[ \int_t^\tau \bigg(\int_{|e| \leq \eps} K_{s,\tau}^{\alpha,t,x}(e)\gamma(X_s^{\alpha,t,x},e)\nu(de)\bigg)^2ds\bigg]\bigg) \nonumber \\
& +C \mathbb{E}\bigg[ \int_t^\tau \bigg(\int_{|e|> \varepsilon} K_{s,\tau}^{\alpha,t,x}(e)(\gamma(X_s^{\alpha,t,x},e)-\gamma(X_s^{\varepsilon,\alpha,t,x},e))\nu(de)\bigg)^2 ds\bigg] \nonumber \\
& \leq C \left( \mathbb{E}\bigg[\sup_{t \leq u \leq T} |X_u^{\varepsilon, \alpha,t,x}-X_u^{\alpha,t,x}|^2\bigg]+ \mathbb{E}\bigg[ \int_t^\tau \!\!\! \bigg(\!\int_{|e| \leq \eps} \!(K_{s,\tau}^{\alpha,t,x})^2(e) \nu(de)\bigg) \!\bigg(\!\int_{|e| \leq \eps} \!\! \gamma^2(X_s^{\alpha,t,x},e)\nu(de)\bigg)ds\bigg]\right) \nonumber \\
&+C \mathbb{E}\bigg[\! \int_t^\tau \bigg(\!\int_{|e|> \varepsilon} (K_{s,\tau}^{\alpha,t,x})^2(e)\nu(de)\bigg)\bigg(\int_{|e|> \varepsilon}(\gamma(X_s^{\alpha,t,x},e)-\gamma(X_s^{\varepsilon,\alpha,t,x},e))^2\nu(de)\bigg)ds\bigg],
\end{align*}
where $C$ is a constant independent on $\alpha$ and $\tau$, only depending on $\beta$, $\eta$, $T$ and the Lipschitz constant of $f$.
Now, the uniform boundness of $\zeta$, $g$ and $f$ with respect to $\alpha$ and $\tau$ (see Assumption \ref{assum:coeff}), together with the a priori estimates for $L^p$ solutions of BSDEs  with $p=2$ and $p=4$ (see Proposition 2 in \cite{Kruse2016}) gives us an uniform control on the $\mathbb{H}^2_{t,\nu}$ (resp. $\mathbb{H}^4_{t,\nu}$) norm of $K_{\cdot,\tau}^{\alpha,t,x}$ (which only depends on the bounds of $\zeta$, $g$, $f$ and $T$). Using this result and the assumptions on the map $\gamma$ (Assumption \ref{assum:coeff}), we derive that
 there exists a constant $C$ independent on $\tau$ and $\alpha$ such that
\begin{align*}
\left|\mathcal{E}^{f^{\varepsilon,\alpha}}_{t,\tau}\left[\zeta(\tau, X_\tau^{\varepsilon, \alpha,t,x})\right]- \mathcal{E}^{f^{\alpha}}_{t,\tau}\left[\zeta(\tau, X_\tau^{\alpha,t,x})\right] \right| \leq C (\kappa(\varepsilon))^{\frac{1}{4}}.
\end{align*}
We now take the supremum over $\alpha$ and $\tau$ and using $\eqref{eqq1}$ we obtain \eqref{eq:u_eps_conv} .
\end{proof}

In the following theorem, we  show that $u^\varepsilon$ is  the unique viscosity solution of the (backward) HJBVI equation \eqref{eq:hjbvi_trun} introduced in Section \ref{sec:scheme}.
\begin{Theorem}
The function $u^\varepsilon$ defined by $\eqref{eq:nonlinear_control_m}$ is the unique viscosity solution of the obstacle problem: $u(T,x)=g(x)$ for $x\in\R^d$ and 
\begin{align*}
\min\{u^\varepsilon-\zeta, \inf_{\alpha \in \textbf{A}}(-u^\eps_t-L^\a_\eps u^\eps-f^{\eps,\a}(\x,u^\eps, (\tilde{\sigma}^\a)^T Du^\eps, B^\a u^\eps))\} = 0, \q \x\in [0,T)\t\R^d.
\end{align*}
\end{Theorem}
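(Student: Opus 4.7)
The plan is to deduce this result as a direct application of the characterisation theorem established in \cite{dumitrescu2016,dumitrescu2016a} to the modified control problem \eqref{eq:nonlinear_control_m}. The key observation is that the modified coefficients $\tilde{\sigma}^\a$ (defined in \eqref{eq:small_diff}), the truncated L\'evy measure $\nu_\eps$, and the modified driver $f^\eps$ all satisfy Assumption \ref{assum:coeff}. Indeed, the Lipschitz continuity and linear growth of $\tilde{\sigma}^\a$ follow from the bound $|\eta^\a(x,e)-\eta^\a(x',e)| \le C|x-x'|(1\wedge|e|)$ together with the integrability of $(1\wedge|e|^2)$ against $\nu$; the integrability condition on $\nu_\eps$ is inherited from $\nu$; and the monotonicity/Lipschitz structure of $f^\eps$ in $(y,z,k)$ is preserved under the truncation $\int_{|e|\ge\eps} k(e)\gamma(x,e)\,\nu(de)$. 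Consequently, the modified SDE \eqref{eq:sde_m} and BSDE \eqref{eq:bsde_m} are well-posed in the appropriate spaces, and the mixed stopping-control problem \eqref{eq:nonlinear_control_m} is well-defined.

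With this verification in hand, the plan proceeds in three steps. First, I would establish a dynamic programming principle for $u^\eps$: for every stopping time $\theta\in\cT_t^t$ with $t\le\theta\le T$,
\begin{equation*}
u^\eps(t,x) = \sup_{\a\in\cA_t^t}\sup_{\tau\in\cT_t^t} \cE^{f^{\eps,\a}}_{t,\tau\wedge\theta}\bigl[\zeta(\tau,X^{\eps,\a,t,x}_\tau)1_{\tau<\theta}+u^\eps(\theta,X^{\eps,\a,t,x}_\theta)1_{\tau\ge\theta}\bigr].
\end{equation*}
This follows from the flow property of the controlled SDE, the consistency of the nonlinear expectation $\cE^{f^{\eps,\a}}$ (which in turn uses the comparison principle for BSDEs with jumps), and a measurable selection argument, exactly as in \cite{dumitrescu2016a} where the analogous DPP is proved for the untruncated problem.

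Second, using the DPP and standard test-function arguments, I would verify that $u^\eps$ is a viscosity subsolution and supersolution of \eqref{eq:hjbvi_trun}. The obstacle inequality $u^\eps \ge \zeta$ is immediate from taking $\tau=t$ (when $t>0$) in \eqref{eq:nonlinear_control_m}. For the HJB part, at a point of maximum/minimum of $u^\eps-\phi$ with $\phi\in C^{1,2}$, one chooses constant controls and suitable exit times from a small ball, applies It\^o's formula to $\phi(s,X^{\eps,\a,t,x}_s)$, uses the comparison estimate for BSDEs together with the continuity of the driver in $\a$, and passes to the limit; this yields the required inequalities involving $L^\a_\eps$, $B^\a_\eps$ and $f^\eps$.

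Third, uniqueness follows directly from the strong comparison principle for \eqref{eq:hjbvi_trun}, which holds under Assumption \ref{assum:coeff} and is quoted at the end of Section \ref{sec:formulation} (proved in \cite{dumitrescu2015,dumitrescu2016}). The boundary condition $u^\eps(T,\cdot)=g$ (after time reversal) is built into the definition of $\xi$ via $\xi(T,X^{\eps,\a,T,x}_T)=g(x)$. The main obstacle is the rigorous DPP in this mixed stopping/control setting with jumps and nonlinear expectations, but since the coefficients of the truncated problem satisfy the same hypotheses as those in \cite{dumitrescu2016a}, the argument transfers mutatis mutandis; no new analytical ideas are needed beyond checking that the modifications preserve Assumption \ref{assum:coeff}.
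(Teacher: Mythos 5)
Your proposal is correct and takes essentially the same approach as the paper: the paper's proof simply notes that, owing to compactness of $\textbf{A}$, the argument of \cite{dumitrescu2015} (without controls) carries over, which—just as you observe—reduces to checking that the modified coefficients $\tilde\sigma^\a$, $\nu_\eps$, $f^\eps$ satisfy Assumption~\ref{assum:coeff} and then invoking the standard DPP/viscosity characterisation and comparison machinery. Your version is more explicit about the three-step structure (DPP, sub/supersolution verification, uniqueness via comparison), but the core reduction is identical.
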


\begin{proof}
{Due to the compactness of the set $\textbf{A}$, the proof is similar to the one given in \cite{dumitrescu2015} in the case without controls.}
\end{proof}

\begin{Remark}
Contrary to the case without control and optimal stopping studied in \cite{Giulia2013}, it is not clear that one can use a different approximation of the forward backward system by introducing an independent Brownian motion scaled with the standard deviation of small jumps. Indeed, the equations would be well-posed in an enlarged filtration $\mathbb{G}$, but the control process is $\mathbb{F}$-predictable, with $\mathbb{F} \subset \mathbb{G}$, which leads to difficulties in the derivation of the dynamic programming principle.
\end{Remark}


\newpage

\end{document}